\theoremstyle{plain}
\newtheorem{theorem}{Theorem}
\newtheorem{corr}[theorem]{Corollary}
\newtheorem{lemma}[theorem]{Lemma}
\theoremstyle{definition}
\newtheorem{remark}[theorem]{Remark}
\newtheorem{definition}[theorem]{Definition}
\newtheorem{example}[theorem]{Example}
\newcommand{\Ce}{\boldsymbol{\mathscr{C}}}
\newcommand{\Ge}{\boldsymbol{\mathscr{G}}}
\newcommand{\Xe}{\boldsymbol{\mathscr{X}}}
\newcommand{\F}{\textup{F}}
\newcommand{\zerobf}[0]{\bm{0}}
\newcommand{\onebf}[0]{\bm{1}}
\newcommand{\Omegabf}[0]{\bm{\Omega}}
\newcommand{\Gammabf}[0]{\bm{\Gamma}}
\newcommand{\Sigmabf}[0]{\bm{\Sigma}}
\newcommand{\Lambdabf}[0]{\bm{\Lambda}}
\newcommand{\Psibf}[0]{\bm{\Psi}}
\newcommand{\Phibf}[0]{\bm{\Phi}}
\newcommand{\Abf}[0]{\bm{A}}
\newcommand{\Bbf}[0]{\bm{B}}
\newcommand{\Cbf}[0]{\bm{C}}
\newcommand{\Gbf}[0]{\bm{G}}
\newcommand{\Hbf}[0]{\bm{H}}
\newcommand{\Ibf}[0]{\bm{I}}
\newcommand{\Mbf}[0]{\bm{M}}
\newcommand{\Nbf}[0]{\bm{N}}
\newcommand{\Qbf}[0]{\bm{Q}}
\newcommand{\Sbf}[0]{\bm{S}}
\newcommand{\Tbf}[0]{\bm{T}}
\newcommand{\Ubf}[0]{\bm{U}}
\newcommand{\Vbf}[0]{\bm{V}}
\newcommand{\Wbf}[0]{\bm{W}}
\newcommand{\Xbf}[0]{\bm{X}}
\newcommand{\Ybf}[0]{\bm{Y}}
\newcommand{\abf}[0]{\bm{a}}
\newcommand{\ebf}[0]{\bm{e}}
\newcommand{\gbf}[0]{\bm{g}}
\newcommand{\pbf}[0]{\bm{p}}
\newcommand{\qbf}[0]{\bm{q}}
\newcommand{\vbf}[0]{\bm{v}}
\newcommand{\wbf}[0]{\bm{w}}
\newcommand{\xbf}[0]{\bm{x}}
\newcommand{\ybf}[0]{\bm{y}}
\newcommand\Eb{\mathbb{E}}
\newcommand\Pb{\mathbb{P}}
\newcommand\Rb{\mathbb{R}}
\newcommand{\Dc}[0]{\mathcal{D}}
\newcommand{\Ic}{\mathcal{I}}
\DeclareMathOperator*{\argmin}{arg\,min}
\DeclareMathOperator*{\CP}{\operatorname{CP}}
\DeclareMathOperator*{\TR}{\operatorname{TR}}
\DeclareMathOperator*{\Ind}{\operatorname{Ind}}
\newcommand{\defeq}{\stackrel{\text{\tiny \textnormal{def}}}{=}}
\newcommand{\rank}[0]{\operatorname{rank}}
\newcommand{\noiter}{\textup{\#iter}}
\newcommand{\nnz}{\operatorname{nnz}}
\newcommand*{\OPT}{\textup{OPT}}
\newcommand{\trace}[0]{\operatorname{trace}}
\DeclareMathOperator*{\kr}{\bigodot}
\DeclareMathOperator*{\startimes}{\scalerel*{\circledast}{\sum}}
\icmltitlerunning{More Efficient Sampling for Tensor Decomposition With Worst-Case Guarantees}
\begin{document}

\twocolumn[
\icmltitle{\makebox[\textwidth][s]{More Efficient Sampling for Tensor Decomposition With Worst-Case Guarantees}}

\icmlsetsymbol{equal}{*}

\begin{icmlauthorlist}
\icmlauthor{Osman Asif Malik}{lbl}

\end{icmlauthorlist}

\icmlaffiliation{lbl}{Applied Mathematics \& Computational Research Division, Lawrence Berkeley National Laboratory, Berkeley, USA}
\icmlcorrespondingauthor{Osman Asif Malik}{oamalik@lbl.gov}
\icmlkeywords{Tensor Decomposition, Randomized Algorithms, Sampling, Worst-Case Guarantees, Feature Extraction}

\vskip 0.3in
]

\printAffiliationsAndNotice{}

\begin{abstract}
Recent papers have developed alternating least squares (ALS) methods for CP and tensor ring decomposition with a per-iteration cost which is sublinear in the number of input tensor entries for low-rank decomposition.
However, the per-iteration cost of these methods still has an exponential dependence on the number of tensor modes when parameters are chosen to achieve certain worst-case guarantees.
In this paper, we propose sampling-based ALS methods for the CP and tensor ring decompositions whose cost does not have this exponential dependence, thereby significantly improving on the previous state-of-the-art. 
We provide a detailed theoretical analysis and also apply the methods in a feature extraction experiment.
\end{abstract}

\section{Introduction} \label{sec:intro}

Tensor decomposition has recently emerged as an important tool in machine learning and data mining \citep{papalexakis2016TensorsData, cichocki2016TensorNetworks, cichocki2017TensorNetworks, ji2019SurveyTensor}.
Examples of applications include parameter reduction in neural networks \citep{novikov2015TensorizingNeural, garipov2016UltimateTensorization, yang2017TensortrainRecurrent, yu2017LongtermForecasting, ye2018LearningCompact}, understanding expressiveness of deep neural networks \citep{cohen2016ExpressivePower, khrulkov2018expressive}, supervised learning \citep{stoudenmire2017SupervisedLearning, novikov2016ExponentialMachines}, filter learning \citep{hazan2005SparseImage, rigamonti2013LearningSeparable}, image factor analysis and recognition \citep{vasilescu2002MultilinearAnalysis, liu2019MachineLearning}, multimodal feature fusion \citep{hou2019DeepMultimodal}, natural language processing \citep{lei2014LowrankTensors}, feature extraction \citep{bengua2015OptimalFeature}, and tensor completion \citep{wang2017EfficientLow}.
Due to their multidimensional nature, tensors are inherently plagued by the curse of dimensionality.
Indeed, simply storing an $N$-way tensor with each dimension equal to $I$ requires $I^N$ numbers.
Tensors are also fundamentally more difficult to decompose than matrices \citep{hillar2013MostTensor}.
Many tensor decompositions correspond to difficult non-convex optimization problems.
A popular approach for tackling these optimization problems is to use alternating least squares (ALS).
While ALS works well for smaller tensors, the per-iteration cost for an $N$-way tensor of size $I \times \cdots \times I$ is $\Omega(I^N)$ since each iteration requires solving a number of least squares problems with the data tensor entries as the dependent variables.
To address this issue, several recent works have developed sampling-based ALS methods for the CP decomposition \citep{cheng2016SPALSFast, larsen2020PracticalLeverageBased} and tensor ring decomposition \citep{malik2021SamplingBasedMethod}.
When the target rank is small enough, they have a per-iteration cost which is \emph{sublinear} in the number of input tensor entries while still retaining approximation guarantees for each least squares solve with high probability.
However, the cost of these methods still has an exponential dependence on $N$: $\Omega(R^{N+1})$ for the CP decomposition and $\Omega(R^{2N+2})$ for the tensor ring decomposition, where $R$ is the relevant notion of rank.
Unlike matrix rank, both the CP and tensor ring ranks of a tensor can exceed the mode dimension $I$, in which case the previous methods would no longer have sublinear per-iteration cost.
This leads us to the following question: 
\vspace{-6pt}
\begin{quote}
	Can we construct ALS algorithms for tensor decomposition with a per-iteration cost \emph{which does not depend exponentially on $N$} and which has guarantees for each least squares solve?
\end{quote}
\vspace{-6pt}
In this paper, we show that this is indeed possible for both the CP and tensor ring%
\footnote{
	Our results are also relevant for the popular tensor train decomposition \citep{oseledets2010ApproximationBackslash, oseledets2011TensortrainDecomposition} since it is a special case of the tensor ring decomposition.
}
decompositions with high probability relative error guarantees.
Like the previous works mentioned above, we also use approximate leverage score sampling.
Unlike those previous works which use quite coarse approximations to the leverage scores, we are able to sample from a distribution which is much closer to the exact one. 
We do this by using ideas for fast leverage score estimation from \citet{drineas2012FastApproximation} combined with the recently developed recursive sketch by \citet{ahle2020ObliviousSketching}.
We also design sampling schemes for both the CP and tensor ring decompositions which allow us to avoid computing the whole sampling distribution which otherwise would cost $\Omega(I^{N-1})$.
We provide a detailed theoretical analysis and run various experiments including one on feature extraction.

When theoretical guarantees are required our methods scale much better with tensor order than the previous methods.
However, these benefits only occur when the tensor order is sufficiently high.
We therefore expect our methods to provide benefits mainly when decomposing higher order tensors.

\section{Related Work} \label{sec:related-work}

\paragraph{CP Decomposition}
\citet{cheng2016SPALSFast} propose SPALS, the first ALS algorithm for CP decomposition with a per-iteration cost sublinear in the number of input tensor entries.
They use leverage scores sampling to speed up computation of the matricized-tensor-times-Khatri--Rao product, a key kernel which arises in the ALS algorithm for CP decomposition. 
\citet{larsen2020PracticalLeverageBased} propose CP-ARLS-LEV which uses leverage score sampling to reduce the size of the least squares problems in the ALS algorithm for CP decomposition.
In addition to several practical algorithmic improvements, their relative error guarantees improve on the weaker additive error guarantees provided by \citet{cheng2016SPALSFast}.
Other papers that develop randomized algorithms for the CP decomposition include \citet{wang2015FastGuaranteed}, \citet{battaglino2018PracticalRandomized}, \citet{yang2018ParaSketchParallel} and \citet{aggour2020AdaptiveSketching}.
There are also works that use both conventional and stochastic optimization approaches \citep{sorber2012UnconstrainedOptimization, sorber2013OptimizationBasedAlgorithms, kolda2020StochasticGradients}.

\paragraph{Tensor Ring Decomposition}
\citet{yuan2019RandomizedTensor} develop a randomized method for the tensor ring decomposition which first compresses the input tensor by applying Gaussian sketches to each mode.
The compressed tensor is then decomposed using standard deterministic decomposition algorithms.
This decomposition is then combined with the sketches to get a decomposition of the original tensor.
\citet{ahmadi-asl2020RandomizedAlgorithms} develop several randomized variants of the deterministic TR-SVD algorithm by replacing the SVDs with their randomized counterpart.
\citet{malik2021SamplingBasedMethod} propose TR-ALS-Sampled which is an ALS algorithm with a per-iteration cost sublinear in the number of input tensor entries.
It uses leverage score sampling to reduce the size of the least squares problems in the standard ALS algorithm.
Other works that develop randomized methods for tensor ring decomposition include \citet{espig2012NoteTensor} and \citet{khoo2019EfficientConstruction}. 

\paragraph{Other Works on Tensor Decomposition}
Papers that develop randomized methods for other tensor decompositions include the works by
\citet{drineas2007RandomizedAlgorithm}, \citet{tsourakakis2010MachFast}, \citet{dacosta2016RandomizedMethods}, \citet{malik2018LowRankTucker}, \citet{sun2020LowrankTucker},  \citet{minster2020RandomizedAlgorithms} and \citet{fahrbach2021FastLowRank} for the Tucker decomposition; 
\citet{biagioni2015RandomizedInterpolative} and \citet{malik2020FastRandomized} for the tensor interpolative decomposition;
\citet{zhang2018RandomizedTensor} and \citet{tarzanagh2018FastRandomized} for t-product-based decompositions;
and \citet{huber2017RandomizedTensor} and \citet{che2019RandomizedAlgorithms} for the tensor train decomposition.
Papers that use skeleton approximation and other sampling-based techniques include those by \citet{mahoney2008TensorCURDecompositions}, \citet{oseledets2008TuckerDimensionality}, \citet{oseledets2010TTcrossApproximation}, \citet{caiafa2010GeneralizingColumn} and \citet{friedland2011FastLow}.

\paragraph{Sketching and Sampling}
A large body of research has been generated over the last two decades focusing on sketching and sampling techniques in numerical linear algebra; see, e.g., the review papers by \citet{halko2011FindingStructure}, \citet{mahoney2011RandomizedAlgorithms}, \citet{woodruff2014SketchingTool} and \citet{martinsson2020RandomizedNumericala}.
Of particular relevance to our work are those papers that develop sketching and sampling techniques for efficient application to matrices whose columns have Kronecker product structure.
These include sketches with particular row structure \citep{biagioni2015RandomizedInterpolative, sun2018TensorRandom, rakhshan2020TensorizedRandom, rakhshan2021RademacherRandom, iwen2021LowerMemory},
the Kronecker fast Johnson--Lindenstrauss transform \citep{battaglino2018PracticalRandomized, jin2020FasterJohnsonLindenstrauss, malik2020GuaranteesKronecker, bamberger2021JohnsonLindenstraussEmbeddings},
TensorSketch \citep{pagh2013CompressedMatrix, pham2013FastScalable, avron2014SubspaceEmbeddings, diao2018SketchingKronecker}, 
sampling-based sketches \citep{cheng2016SPALSFast, diao2019OptimalSketching, larsen2020PracticalLeverageBased, fahrbach2021FastLowRank},
and recursive sketches \citep{ahle2020ObliviousSketching}.

\section{Preliminaries} \label{sec:preliminaries}

By tensor, we mean a multidimensional array containing real numbers.
We will refer to a tensor with $N$ indices as an $N$-way or mode-$N$ tensor.
We use bold Euler script letters (e.g., $\Xe$) for tensors with three or more modes, bold uppercase letters (e.g., $\Xbf$) for matrices, bold lowercase letters (e.g., $\xbf$) for vectors, and lowercase regular letters (e.g., $x$) for scalars.
We indicate specific entries of objects with parentheses.
For example, $\Xe(i,j,k)$ is the entry on position $(i,j,k)$ in $\Xe$, and $\xbf(i)$ is the $i$th entry in $\xbf$.
A colon denotes all elements along a certain mode.
For example, $\Xe(:, k, :)$ is the $k$th lateral slice of $\Xe$, and $\Xbf(i, :)$ is the $i$th row of $\Xbf$.
We will sometimes use superscripts in parentheses to denote a sequence of objects (e.g., $\Abf^{(1)}, \ldots, \Abf^{(N)}$).
For a positive integer $n$, we use the notation $[n] \defeq \{1,\ldots,n\}$.
We use $\otimes$ and $\odot$ to denote the Kronecker and Khatri--Rao products, respectively (defined in Section~\ref{sec:notation}).
By \emph{compact SVD}, we mean an SVD $\Abf = \Ubf \Sigmabf \Vbf^\top$ where $\Sigmabf \in \Rb^{\rank(\Abf) \times \rank(\Abf)}$ and $\Ubf, \Vbf$ have $\rank(\Abf)$ columns.
The $i$th canonical basis vector is denoted by $\ebf_i$.
We denote the indicator of a random event $A$ by $\Ind\{A\}$, which is 1 if $A$ occurs and 0 otherwise.
For indices $i_1 \in [I_1], \ldots, i_N \in [I_N]$, the notation $\overline{i_1 \cdots i_N} \defeq 1 + \sum_{n=1}^N (i_n-1) \prod_{j=1}^{n-1} I_j$ will be useful when working with unfolded tensors.
See Section~\ref{sec:notation} for definitions of the asymptotic notation we use.

\begin{definition} \label{def:unfolding}
	The \emph{classical mode-$n$ unfolding} of $\Xe$ is the matrix $\Xbf_{(n)} \in \Rb^{I_n \times \Pi_{j \neq n} I_j}$ defined elementwise via
	\begin{equation}
		\Xbf_{(n)} (i_n, \overline{i_1 \cdots i_{n-1} i_{n+1} \cdots i_N}) \defeq \Xe(i_1, \ldots, i_N).
	\end{equation}
	The \emph{mode-$n$ unfolding} of $\Xe$ is the matrix $\Xbf_{[n]} \in \Rb^{I_n \times \Pi_{j \neq n} I_j}$ defined elementwise via 
	\begin{equation}
		\Xbf_{[n]} (i_n, \overline{i_{n+1} \cdots i_N i_1 \cdots i_{n-1}} ) \defeq \Xe(i_1, \ldots, i_N).
	\end{equation}
\end{definition}

\subsection{Tensor Decomposition}

We first introduce the CP decomposition.
Consider an $N$-way tensor $\Xe \in \Rb^{I_1 \times \cdots \times I_N}$.
A rank-$R$ \emph{CP decomposition} of $\Xe$ is of the form
\begin{equation} \label{eq:cp-decomposition}
	\Xe(i_1, \ldots, i_N) = \sum_{r=1}^R \prod_{j=1}^N \Abf^{(j)}(i_j, r),
\end{equation}
where each $\Abf^{(j)} \in \Rb^{I_j \times R}$ is called a \emph{factor matrix}. 
We use $\CP(\Abf^{(1)}, \ldots, \Abf^{(N)})$ to denote the tensor in \eqref{eq:cp-decomposition}.
The problem of computing a rank-$R$ CP decomposition of a data tensor $\Xe$ can be formulated as 
\begin{equation} \label{eq:cp-optimization-problem}
	\argmin_{\Abf^{(1)}, \ldots, \Abf^{(N)}} \| \CP(\Abf^{(1)}, \ldots, \Abf^{(N)}) - \Xe \|_\F.
\end{equation}
Unfortunately, this problem is non-convex and difficult to solve exactly.
ALS is the ``workhorse'' algorithm for solving this problem approximately \citep{kolda2009TensorDecompositions}.
With ALS, we consider the objective in \eqref{eq:cp-optimization-problem}, but only solve with respect to one of the factor matrices at a time while keeping the others fixed:
\begin{equation} \label{eq:cp-optimization-An}
	\argmin_{\Abf^{(n)}} \| \CP(\Abf^{(1)}, \ldots, \Abf^{(N)}) - \Xe \|_\F.
\end{equation}

The problem in \eqref{eq:cp-optimization-An} can be rewritten as the linear least squares problem
\begin{equation} \label{eq:cp-optimization-An-matrix}
	\argmin_{\Abf^{(n)}} \| \Abf^{\neq n} \Abf^{(n) \top} - \Xbf_{(n)}^\top \|_\F,
\end{equation}
where $\Abf^{\neq n} \in \Rb^{(\Pi_{j \neq n} I_j) \times R}$ is defined as 
\begin{equation} \label{eq:cp-design-matrix}
	\Abf^{\neq n} \defeq \Abf^{(N)} \odot \cdots \odot \Abf^{(n+1)} \odot \Abf^{(n-1)} \odot \cdots \odot \Abf^{(1)}.
\end{equation}
By repeatedly updating each factor matrix one at a time via \eqref{eq:cp-optimization-An-matrix}, we get the standard CP-ALS algorithm outlined in Algorithm~\ref{alg:CP-ALS}.
For further details on the CP decomposition, see \citet{kolda2009TensorDecompositions}. 

\begin{algorithm}
	\caption{CP-ALS}
	\label{alg:CP-ALS}
	\DontPrintSemicolon
	\KwIn{$\Xe \in \Rb^{I_1 \times \cdots \times I_N}$, rank $R$}
	\KwOut{Factor matrices $\Abf^{(1)}, \ldots, \Abf^{(N)}$}
	Initialize factor matrices $\Abf^{(2)}, \ldots, \Abf^{(N)}$ \label{line:cp-als:initialize-cores}\;
	
	\While{termination criteria not met}{
		\For{$n = 1,\ldots, N$}{
			$\Abf^{(n)} = \argmin_{\Abf} \| \Abf^{\neq n} \Abf^\top - \Xbf_{(n)}^\top \|_\F$ \label{line:cp-als:ls}\;
		}	
	}
	\Return{$\Abf^{(1)}, \ldots, \Abf^{(N)}$}\;
\end{algorithm}

Next, we introduce the tensor ring decomposition.
For $n \in [N]$, let $\Ge^{(n)} \in \Rb^{R_{n-1} \times I_n \times R_n}$ be 3-way tensors with $R_0 = R_N$. 
A rank-$(R_1, \ldots, R_N)$ \emph{tensor ring decomposition} of $\Xe$ is of the form
\begin{equation} \label{eq:tr-decomposition}
	\Xe(i_1, \ldots, i_N) = \sum_{r_1, \ldots, r_N} \prod_{n=1}^N \Ge^{(n)}(r_{n-1}, i_n, r_n),
\end{equation}
where each $r_n$ in the sum goes from $1$ to $R_n$ and $r_0 = r_N$, and each $\Ge^{(n)}$ is called a \emph{core tensor}. 
We use $\TR(\Ge^{(1)},\ldots,\Ge^{(N)})$ to denote the tensor in \eqref{eq:tr-decomposition}.
Finding the best possible rank-$(R_1, \ldots, R_N)$ tensor ring decomposition of a tensor $\Xe$ is difficult.
With an ALS approach we can update a single core tensor at a time by solving the following problem:
\begin{equation} \label{eq:tr-optimization-Gn}
	\argmin_{\Ge^{(n)}} \| \TR(\Ge^{(1)}, \ldots, \Ge^{(N)}) - \Xe \|_\F.
\end{equation}
To reformulate this problem into a linear least squares problem we will need the following definition.
\begin{definition} \label{def:subchain}
	By merging all cores except the $n$th, we get a \emph{subchain tensor} $\Ge^{\neq n} \in \Rb^{R_n \times (\Pi_{j \neq n} I_j) \times R_{n-1}}$ defined elementwise via
	\begin{equation}
		\begin{aligned}
			&\Ge^{\neq n}(r_n, \overline{i_{n+1} \ldots i_N i_1 \ldots i_{n-1}}, r_{n-1}) \\
			&\defeq \sum_{\substack{r_1, \ldots, r_{n-2}\\r_{n+1}, \ldots, r_N}} \prod_{\substack{j = 1\\j \neq n}}^N \Ge^{(j)}(r_{j-1}, i_j, r_j).
		\end{aligned}
	\end{equation}
\end{definition}
The problem in \eqref{eq:tr-optimization-Gn} can now be written as the linear least squares problem
\begin{equation} \label{eq:tr-optimization-Gn-matrix}
	\Ge^{(n)} = \argmin_{\Ge} \| \Gbf_{[2]}^{\neq n} \Gbf_{(2)}^\top - \Xbf_{[n]}^{\top} \|_\F.
\end{equation}
By repeatedly updating each core tensor one at a time via \eqref{eq:tr-optimization-Gn-matrix}, we get the standard TR-ALS algorithm outlined in Algorithm~\ref{alg:TR-ALS}.
For further details on the tensor ring decomposition, see \citet{zhao2016TensorRing}.
\begin{algorithm}
	\caption{TR-ALS}
	\label{alg:TR-ALS}
	\DontPrintSemicolon
	\KwIn{$\Xe \in \Rb^{I_1 \times \cdots \times I_N}$, ranks $R_1,\ldots,R_N$}
	\KwOut{Core tensors $\Ge^{(1)}, \ldots, \Ge^{(N)}$}
	Initialize core tensors $\Ge^{(2)}, \ldots, \Ge^{(N)}$ \label{line:tr-als:initialize-cores}\;
	
	\While{termination criteria not met}{
		\For{$n = 1,\ldots, N$}{
			$\Ge^{(n)} = \argmin_{\Ge} \| \Gbf_{[2]}^{\neq n} \Gbf_{(2)}^\top - \Xbf_{[n]}^{\top} \|_\F$ \label{line:tr-als:ls}\;
		}	
	}
	\Return{$\Ge^{(1)}, \ldots, \Ge^{(N)}$}\;
\end{algorithm}

\subsection{Recursive Sketching} \label{sec:gaussian-and-recursive-sketches}

\citet{ahle2020ObliviousSketching} present two variants of their recursive sketch.
The first one, which we will use, combines CountSketch and TensorSketch into a single sketch which can be applied efficiently to Kronecker structured vectors.
CountSketch was first introduced by \citet{charikar2004FindingFrequent} and extended to the linear algebra setting by \citet{clarkson2017LowRankApproximation}.
Recall that a function $h : [I] \rightarrow [J]$ is said to be \emph{$k$-wise independent} if it is chosen from a family of functions such that for any $k$ distinct $i_1, \ldots, i_k \in [I]$ the values $h(i_1), \ldots, h(i_k)$ are independent random variables uniformly distributed in $[J]$ \citep{pagh2013CompressedMatrix}.
\begin{definition}
	Let $h : [I] \rightarrow [J]$ and $s : [I] \rightarrow \{-1,+1\}$ be 3- and 4-wise independent functions, respectively.
	The \emph{CountSketch} matrix $\Cbf \in \Rb^{J \times I}$ is defined elementwise via $\Cbf(j,i) \defeq s(i) \cdot \Ind\{ h(i) = j\}$.
\end{definition}

The TensorSketch was developed in a series of papers by \citet{pagh2013CompressedMatrix}, \citet{pham2013FastScalable}, \citet{avron2014SubspaceEmbeddings} and \citet{diao2018SketchingKronecker}.
\begin{definition}
	Let $h_1, h_2 : [I] \rightarrow [J]$ and $s_1, s_2 : [I] \rightarrow \{-1,+1\}$ be 3- and 4-wise independent functions, respectively.
	Define $h : [I] \times [I] \rightarrow [J]$ via
	\begin{equation} 
		h(i_1, i_2) \defeq (h_1(i_1) + h_2(i_2) \mod J) + 1.
	\end{equation}
	The \emph{degree-two TensorSketch} matrix $\Tbf \in \Rb^{J \times I^2}$ is defined elementwise via 
	\begin{equation}
		\Tbf(j,\overline{i_1 i_2}) \defeq s(i_1) s(i_2) \cdot \Ind\{ h(i_1, i_2) = j \}.
	\end{equation}
\end{definition}

We are now ready to describe the recursive sketch of \citet{ahle2020ObliviousSketching}.
It is easiest to understand it if we consider its application to Kronecker structured vectors.
Consider $\xbf = \xbf_1 \otimes \cdots \otimes \xbf_N \in \Rb^{I_1 \cdots I_N}$, where each $\xbf_n \in \Rb^{I_n}$.%
\footnote{
	\citet{ahle2020ObliviousSketching} consider the case when each $\xbf_n$ has the same length.
	We consider a slightly more general definition here since this allows us to work with tensors whose modes are of different size.
}
Suppose first that $N = 2^q$ is a power of 2.
The first step of the recursive sketch is to apply an independent CountSketch matrix $\Cbf_n \in \Rb^{J \times I_n}$ to each $\xbf_n$:
\begin{equation} \label{eq:recursive-sketch-init}
	\ybf^{(0)}_n \defeq \Cbf_n \xbf_n \in \Rb^J, \;\;\;\; n \in [N]. 
\end{equation}
The vectors $\ybf_n^{(0)}$ are then combined pairwise using independent degree-two TensorSketches $\Tbf^{(1)}_n \in \Rb^{J \times J^2}$:
\begin{equation} \label{eq:recursive-sketch-1}
	\ybf^{(1)}_n \defeq \Tbf^{(1)}_n( \ybf^{(0)}_{2n-1} \otimes \ybf^{(0)}_{2n} ), \;\;\;\; n \in [N/2].
\end{equation}
This process is then repeated: 
At each step, pairs of length-$J$ vectors are combined using independent TensorSketches of size $J \times J^2$. 
The $m$th step is
\begin{equation} \label{eq:recursive-sketch-m}
	\ybf^{(m)}_n \defeq \Tbf^{(m)}_n( \ybf^{(m-1)}_{2n-1} \otimes \ybf^{(m-1)}_{2n} ), \;\;\;\; n \in [N/2^m].
\end{equation}
When $m=q$, we are left with a single vector $\ybf^{(q)}_1 \in \Rb^J$. 
The mapping $\xbf \mapsto \ybf_1^{(q)}$, which we denote by $\Psibf_{J, (I_n)_{n=1}^{2^q}}$, is the recursive sketch.
If $N$ is not a power of 2, we choose $q \defeq \lceil \log_2 N \rceil$ and define the recursive sketch as $\Psibf_{J, (I_n)_{n=1}^N} : \xbf \mapsto \Psibf_{J, (\tilde{I}_n)_{n=1}^{2^q} }(\xbf \otimes \ebf_1^{\otimes(2^q-N)})$, where $\ebf_1$ is the first canonical basis vector of length $I_{\max} \defeq \max_{n \in [N]} I_n$, and each $\tilde{I}_n \defeq I_{n}$ for $n \leq N$ and $\tilde{I}_n \defeq I_{\max}$ if $n > N$.
We refer to $\Psibf_{J, (I_n)_{n=1}^N}$ as a $(J, (I_n)_{n=1}^N)$-recursive sketch.
It is in fact linear, and when $N=2^q$ we can write $\Psibf_{J, (I_n)_{n=1}^N}$ as a product of $q+1$ matrices:
\begin{equation} \label{eq:recursive-sketch-factorization}
	\Psibf_{J, (I_n)_{n=1}^N} = \Tbf^{(q)} \Tbf^{(q-1)} \cdots \Tbf^{(1)} \Cbf,
\end{equation}
where $\Cbf \defeq \bigotimes_{n=1}^N \Cbf_n$ is a $J^N \times \prod_{n} I_n$ matrix and $\Tbf^{(m)} \defeq \bigotimes_{n=1}^{2^{q-m}} \Tbf_n^{(m)}$ is a $J^{2^{q-m}} \times J^{2^{q-m+1}}$ matrix.
Figure~\ref{fig:recursiveSketch} illustrates the recursive sketch for $N=4$.

\begin{figure}[h]
	\centering  
	\includegraphics[width=.45\columnwidth]{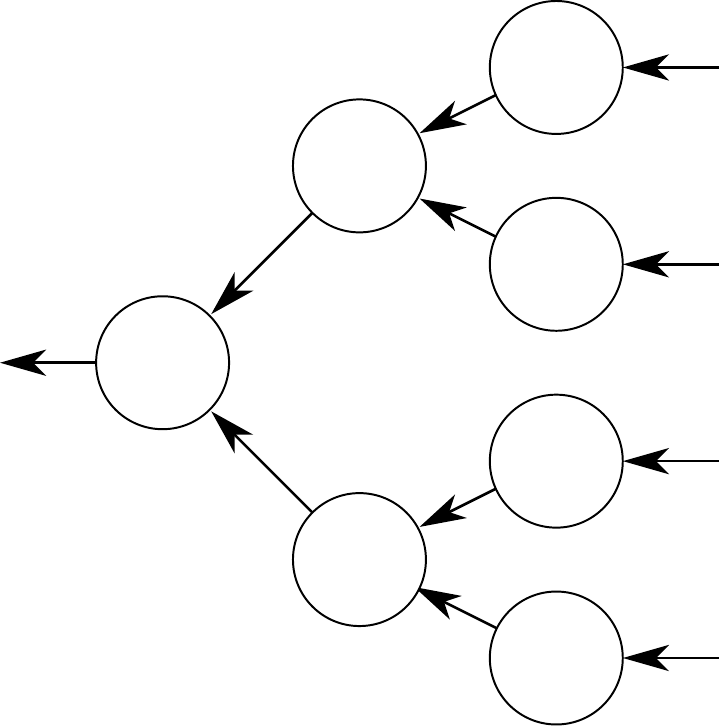}
	\caption{
		Illustration of the recursive sketch applied to a vector $\xbf_1 \otimes \xbf_2 \otimes \xbf_3 \otimes \xbf_4$.
		This is an adaption of Figure~1 in \citet{ahle2020ObliviousSketching}.
	}
	\label{fig:recursiveSketch}
	\begin{picture}(0,0)
		\put(55,150){\footnotesize $\xbf_1$}
		\put(55,121){\footnotesize $\xbf_2$}
		\put(55,92){\footnotesize $\xbf_3$}
		\put(55,63){\footnotesize $\xbf_4$}
		
		\put(24,149){\footnotesize $\Cbf_1$}
		\put(24,120){\footnotesize $\Cbf_2$}
		\put(24,91){\footnotesize $\Cbf_3$}
		\put(24,62){\footnotesize $\Cbf_4$}
		
		\put(-8.5,134){\footnotesize $\Tbf^{(1)}_1$}
		\put(-8.5,76){\footnotesize $\Tbf^{(1)}_2$}
		
		\put(-37.5,105){\footnotesize $\Tbf^{(2)}_1$}
		\put(-69,106){\footnotesize $\ybf^{(2)}_1$}
	\end{picture}
\end{figure}

The recursive sketch is a subspace embedding with high probability.
\begin{definition}
	A matrix $\Psibf \in \Rb^{J \times I}$ is called a \emph{$\gamma$-subspace embedding} for a matrix $\Abf \in \Rb^{I \times R}$ if
	\begin{equation}
		\big| \| \Psibf \Abf \xbf \|_2^2 - \|\Abf \xbf\|_2^2 \big| \leq \gamma \| \Abf \xbf \|_2^2 \;\;\;\; \text{for all } \xbf \in \Rb^R.
	\end{equation}
\end{definition}
The recursive sketch has the remarkable feature that the embedding dimension required for subspace embedding guarantees does not depend exponentially on $N$.
See Theorem~1 in \citet{ahle2020ObliviousSketching} or Theorem~\ref{thm:recursive-sketch} for a precise statement.

\subsection{Leverage Score Sampling}

Leverage score sampling is a popular technique for a variety of problems in numerical linear algebra.
For an in-depth discussion, see \citet{mahoney2011RandomizedAlgorithms} and \citet{woodruff2014SketchingTool}.

\begin{definition} \label{def:leverage-score}
	Let $\Abf \in \Rb^{I \times R}$ and suppose $\Ubf \in \Rb^{I \times \rank(\Abf)}$ contains the left singular vectors of $\Abf$. 
	The $i$th \emph{leverage score} of $\Abf$ is defined as $\ell_i(\Abf) \defeq \| \Ubf(i,:) \|_2^2$ for $i \in [I]$.
\end{definition}

\begin{definition} \label{def:leverage-score-sampling}
	Let $\qbf \in \Rb^I$ be a probability distribution and let $f : [J] \rightarrow [I]$ be a random map such that each $f(j)$ is independent and distributed according to $\qbf$.
	Define $\Sbf \in \Rb^{J \times I}$ elementwise via
	\begin{equation}
		\Sbf(j,i) \defeq \Ind\{ f(j) = i \} / \sqrt{J \qbf(f(j))}.
	\end{equation}
	We call $\Sbf$ a \emph{sampling matrix with parameters $(J,\qbf)$}, or $\Sbf \sim \Dc(J, \qbf)$ for short.
	Let $\Abf \in \Rb^{I \times R}$ be nonzero and suppose $\beta \in (0,1]$.
	Define the distribution $\pbf \in \Rb^{I}$ via $\pbf(i) \defeq \ell_i(\Abf)/\rank(\Abf)$.
	We say that $\Sbf \sim \Dc(J, \qbf)$ is a \emph{leverage score sampling matrix for $(\Abf, \beta)$} if $\qbf(i) \geq \beta \pbf(i)$ for all $i \in [I]$.
\end{definition}

For a least squares problem $\min_{\xbf} \| \Abf \xbf - \ybf \|_2$ where $\Abf \in \Rb^{I \times R}$ has many more rows than columns, we can use sampling to reduce the size of the problem to $\min_{\xbf} \| \Sbf \Abf \xbf - \Sbf \ybf \|_2$.
We would ideally like to sample according to the distribution $\pbf$ in Definition~\ref{def:leverage-score-sampling}, but this requires computing $\Ubf$ in Definition~\ref{def:leverage-score} (e.g., via the SVD or QR decomposition) which costs $O(IR^2)$.
This is the same cost as solving the full least squares problem and is therefore too expensive.
However, as shown by \citet{drineas2012FastApproximation}, the leverage scores can be accurately estimated in less time. 
Theorem~\ref{thm:leverage-score-estimation} is a variant of Lemma~9 by \citet{drineas2012FastApproximation}. 
They consider the case when $\Psibf$ is a fast Johnson--Lindenstrauss transform instead of a subspace embedding. 
\begin{theorem} \label{thm:leverage-score-estimation}
	Let $\Abf \in \Rb^{I \times R}$ where $I > R$, $\gamma \in (0,1)$ and suppose $\Psibf \Abf = \Ubf_1 \Sigmabf_1 \Vbf_1^\top$ is a compact SVD.
	Define
	\begin{equation} \label{eq:u-tilde}
		\tilde{\ell}_i(\Abf) \defeq \| \ebf_i^\top \Abf \Vbf_1 \Sigmabf_1^{-1} \|_2^2.
	\end{equation}
	Suppose that $\Psibf$ is a $\gamma$-subspace embedding for $\Abf$.
	Then
	\begin{equation}
		| \ell_i(\Abf) - \tilde{\ell}_i(\Abf) | \leq \frac{\gamma}{1-\gamma} \ell_i(\Abf) \;\;\;\; \text{for all } i \in [I].
	\end{equation}
\end{theorem}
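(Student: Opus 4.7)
The plan is to introduce the auxiliary matrix $\tilde{\Ubf} \defeq \Abf \Vbf_1 \Sigmabf_1^{-1}$, which serves as a ``near-orthonormal'' basis for $\col(\Abf)$, and then relate its row norms (which are exactly the $\tilde{\ell}_i$ by construction) to those of the exact left singular matrix $\Ubf$ of $\Abf$. The first key identity is
\begin{equation}
\Psibf \tilde{\Ubf} \;=\; \Psibf \Abf \Vbf_1 \Sigmabf_1^{-1} \;=\; \Ubf_1 \Sigmabf_1 \Vbf_1^\top \Vbf_1 \Sigmabf_1^{-1} \;=\; \Ubf_1,
\end{equation}
which has orthonormal columns because $\Vbf_1$ does, so $\|\Psibf \tilde{\Ubf} \ybf\|_2 = \|\ybf\|_2$ for every $\ybf$. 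Second, the subspace embedding hypothesis forces $\rank(\Psibf\Abf) = \rank(\Abf)$ (otherwise a nonzero $\Abf\xbf$ would have $\|\Psibf\Abf\xbf\|_2 = 0 < (1-\gamma)\|\Abf\xbf\|_2^2$), so $\Sigmabf_1$ is invertible and $\tilde{\Ubf}$ has the same column space as $\Abf$. Writing any $\tilde{\Ubf}\ybf$ as $\Abf\xbf$ with $\xbf = \Vbf_1\Sigmabf_1^{-1}\ybf$ then transfers the $\gamma$-subspace embedding guarantee from $\Abf$ to $\tilde{\Ubf}$: $\bigl|\|\Psibf \tilde{\Ubf}\ybf\|_2^2 - \|\tilde{\Ubf}\ybf\|_2^2\bigr| \leq \gamma\|\tilde{\Ubf}\ybf\|_2^2$.

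Combining the two displays and rearranging yields
\begin{equation}
\frac{1}{1+\gamma}\|\ybf\|_2^2 \;\leq\; \|\tilde{\Ubf}\ybf\|_2^2 \;\leq\; \frac{1}{1-\gamma}\|\ybf\|_2^2 \quad \text{for all } \ybf,
\end{equation}
so the singular values of $\tilde{\Ubf}$ lie in $[1/\sqrt{1+\gamma},\,1/\sqrt{1-\gamma}]$. Now let $\Abf = \Ubf \Sigmabf \Vbf^\top$ be a compact SVD. Since $\Ubf$ and $\tilde{\Ubf}$ are both $I \times \rank(\Abf)$ matrices with column space $\col(\Abf)$, there is a unique invertible $\Zbf$ with $\tilde{\Ubf} = \Ubf \Zbf$, and the identity $\tilde{\Ubf}^\top \tilde{\Ubf} = \Zbf^\top \Zbf$ shows $\Zbf$ shares the singular value bounds of $\tilde{\Ubf}$. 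Hence for each row $i$,
\begin{equation}
\tilde{\ell}_i(\Abf) \;=\; \|\Zbf^\top \Ubf(i,:)^\top\|_2^2 \;\in\; \Big[\tfrac{1}{1+\gamma}\,\ell_i(\Abf),\;\tfrac{1}{1-\gamma}\,\ell_i(\Abf)\Big],
\end{equation}
and the claimed bound $|\ell_i(\Abf) - \tilde{\ell}_i(\Abf)| \leq \tfrac{\gamma}{1-\gamma}\,\ell_i(\Abf)$ follows immediately since $\tfrac{\gamma}{1+\gamma} \leq \tfrac{\gamma}{1-\gamma}$.

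The main conceptual hurdle is that leverage scores are row norms of a \emph{particular} orthonormal basis, the left singular vectors of $\Abf$, whereas the subspace embedding hypothesis only controls column-space geometry in an unspecified basis. The matrix $\tilde{\Ubf}$ is the bridge that resolves this tension: it is computable from $\Psibf\Abf$ alone (which is what the fast estimator in \eqref{eq:u-tilde} uses), its row norms are $\tilde{\ell}_i$ by design, and the embedding property translates cleanly into closeness of its Gram matrix to the identity, which in turn gives a near-orthogonal change of basis $\Zbf$ between $\tilde{\Ubf}$ and $\Ubf$. Once that bridge is set up, the rest is routine singular-value bookkeeping.
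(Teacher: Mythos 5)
Your proof is correct, and it reaches the same conclusion by a noticeably different route than the paper. The paper (Lemma~\ref{lemma:subspace-embedding-properties} plus the proof in Section~\ref{sec:proof-leverage-score-estimation}) works on the sketched side: it bounds the singular values of $\Psibf\Ubf$ in $[\sqrt{1-\gamma},\sqrt{1+\gamma}]$, proves the pseudoinverse identity $(\Psibf\Abf)^\dagger = \Vbf\Sigmabf^{-1}(\Psibf\Ubf)^\dagger$ so that $\tilde{\ell}_i(\Abf) = \|\ebf_i^\top\Ubf(\Psibf\Ubf)^\dagger\|_2^2$, and then controls the difference $\ell_i - \tilde{\ell}_i$ as a quadratic form via Cauchy--Schwarz and the spectral bound $\|\Ibf - \Lambdabf^{-2}\|_2 \le \gamma/(1-\gamma)$. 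You instead work on the unsketched side with $\tilde{\Ubf} = \Abf\Vbf_1\Sigmabf_1^{-1}$, observe that $\Psibf\tilde{\Ubf} = \Ubf_1$ is orthonormal so the embedding property sandwiches the singular values of $\tilde{\Ubf}$ itself in $[1/\sqrt{1+\gamma},1/\sqrt{1-\gamma}]$, and transfer this to row norms through the change of basis $\Zbf = \Ubf^\top\tilde{\Ubf}$. The two arguments are dual (your $\Zbf$ equals $(\Psibf\Ubf)^\dagger\Ubf_1$ up to the orthogonal factor $\Ubf_1$, so $\Zbf\Zbf^\top = (\Psibf\Ubf)^\dagger(\Psibf\Ubf)^{\dagger\top}$), but yours buys a slightly stronger conclusion essentially for free: the two-sided multiplicative sandwich $\tilde{\ell}_i \in [\ell_i/(1+\gamma),\,\ell_i/(1-\gamma)]$, whose lower bound $\ell_i/(1+\gamma)$ improves on the $\ell_i(1-2\gamma)/(1-\gamma)$ implied by the paper's absolute-difference bound; it also avoids the Cauchy--Schwarz step and most of the pseudoinverse bookkeeping. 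Two trivial blemishes: the inequality you write when arguing $\rank(\Psibf\Abf)=\rank(\Abf)$ mixes $\|\cdot\|_2$ with $\|\cdot\|_2^2$ (clearly a typo), and you should note explicitly that $\rank(\tilde{\Ubf}) = \rank(\Abf)$ (immediate from your singular value bounds) before asserting $\col(\tilde{\Ubf}) = \col(\Ubf)$ and the invertibility of $\Zbf$; neither affects the validity of the argument.
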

A proof of Theorem~\ref{thm:leverage-score-estimation} appears in Section~\ref{sec:proof-leverage-score-estimation}.

\section{Efficient Sampling for Tensor Decomposition} \label{sec:efficient-sampling-for-TD}

In this section we present our proposed sampling schemes for the CP and tensor ring decompositions of a tensor $\Xe \in \Rb^{I_1 \times \cdots \times I_N}$.
We will refer to these methods as CP-ALS-ES and TR-ALS-ES, respectively, where ``ES'' is short for ``Efficient Sampling.''

\subsection{CP Decomposition} \label{sec:sampling-for-cp}

Each least squares solve on line~\ref{line:cp-als:ls} in Algorithm~\ref{alg:CP-ALS} involves all entries in $\Xe$.
To reduce the size of this problem, we sample rows according to an approximate leverage score distribution computed as in Theorem~\ref{thm:leverage-score-estimation} with $\Psibf$ chosen to be a recursive sketch.
Theorem~\ref{thm:cp-main} shows that such a sampling approach yields relative error guarantees for the CP-ALS least squares problem.
\begin{theorem} \label{thm:cp-main}
	Let $\Abf^{\neq n}$ be defined as in \eqref{eq:cp-design-matrix}.
	Define the vector $\vbf \defeq \begin{bmatrix} N, & \cdots & n+1, & n-1, & \cdots & 1\end{bmatrix}$ and suppose $\varepsilon, \delta \in (0,1)$.
	Suppose the estimates $\tilde{\ell}_i(\Abf^{\neq n})$ are computed as in Theorem~\ref{thm:leverage-score-estimation}, with $\Psibf \in \Rb^{J_1 \times \Pi_{j \neq n} I_j}$ chosen to be a $(J_1, (I_{\vbf(j)})_{j=1}^{N-1})$-recursive sketch.
	Moreover, suppose $\Sbf \in \Rb^{J_2 \times \Pi_{j \neq n} I_j}$ is a sampling matrix with parameters $(J_2, \qbf)$ where $\qbf(i) \propto \tilde{\ell}_i(\Abf^{\neq n})$.
	If 
	\begin{align}
		J_1 &\gtrsim N R^2/\delta, \label{eq:cp-J1} \\
		J_2 &\gtrsim R \, \max \big( \log(R/\delta), 1/(\varepsilon \delta)\big), \label{eq:cp-J3}
	\end{align} 
	then $\tilde{\Abf} \defeq \argmin_{\Abf} \| \Sbf \Abf^{\neq n} \Abf^\top - \Sbf \Xbf_{(n)}^\top \|_\F$ satisfies the following with probability at least $1-\delta$:
	\begin{equation} \label{eq:cp-main-relative-error}
		\| \Abf^{\neq n} \tilde{\Abf}^\top - \Xbf_{(n)}^\top \|_\F \leq (1+\varepsilon) \min_{\Abf} \| \Abf^{\neq n} \Abf^\top - \Xbf_{(n)}^\top \|_\F.
	\end{equation}
\end{theorem}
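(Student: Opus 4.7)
The overall plan is to decouple the two sources of randomness---the recursive sketch $\Psibf$ used to estimate leverage scores and the sampling matrix $\Sbf$ used to reduce the least squares problem---and then invoke a standard leverage score sampling guarantee for overdetermined regression. I will condition on $\Psibf$ being a $\gamma$-subspace embedding for $\Abf^{\neq n}$, show that this forces the approximate distribution $\qbf$ to lower bound the exact leverage score distribution by a constant factor, and then apply known $(1+\varepsilon)$-error bounds for sketched least squares.

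For the first stage I apply Theorem~\ref{thm:recursive-sketch} with a small fixed constant $\gamma$ (say $\gamma = 1/3$). The crucial feature is that the embedding dimension of the recursive sketch depends only polynomially---not exponentially---on $N$; in particular $J_1 \gtrsim NR^2/\delta$ suffices for $\Psibf$ to be a $\gamma$-subspace embedding for $\Abf^{\neq n}$ with probability at least $1-\delta/2$. This is the only place where the Khatri--Rao structure is exploited: each column of $\Abf^{\neq n}$ is a Kronecker product by \eqref{eq:cp-design-matrix}, so the recursive sketch applies natively to its column space. Conditional on this embedding event, Theorem~\ref{thm:leverage-score-estimation} gives $|\tilde{\ell}_i - \ell_i| \leq \tfrac{\gamma}{1-\gamma}\ell_i$ for every $i$, and hence $\tfrac{1-2\gamma}{1-\gamma}\ell_i \leq \tilde{\ell}_i \leq \tfrac{1}{1-\gamma}\ell_i$. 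Combining these two-sided bounds with $\sum_i \ell_i(\Abf^{\neq n}) = \rank(\Abf^{\neq n})$ gives $\qbf(i) \propto \tilde{\ell}_i \geq (1-2\gamma)\,\pbf(i)$, so $\Sbf$ is a leverage score sampling matrix for $(\Abf^{\neq n}, \beta)$ with $\beta = 1-2\gamma$ a positive constant.

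For the second stage I invoke a standard leverage score sampling guarantee for $\ell_2$ regression (as developed in \citet{woodruff2014SketchingTool}): provided $J_2 \gtrsim R\max(\log(R/\delta), 1/(\varepsilon\delta))$---with the implicit constant absorbing $1/\beta$---the matrix $\Sbf$ is, with probability at least $1-\delta/2$, simultaneously a $\tfrac{1}{2}$-subspace embedding for $\Abf^{\neq n}$ and satisfies an approximate matrix multiplication bound against each column of the residual $\Xbf_{(n)}^\top - \Abf^{\neq n} \Abf^{*\top}$ at the optimum $\Abf^*$. The usual normal-equations argument then yields \eqref{eq:cp-main-relative-error} columnwise, and summing squared residuals upgrades this to the Frobenius-norm bound. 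A union bound over the two stages controls the overall failure probability at $\delta$.

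The main obstacle is threading the constants cleanly through the two stages: the choice of $\gamma$ in the first stage determines $\beta$ in the second, which determines the required $J_2$, and all $\gamma$- and $\beta$-dependent factors must be absorbed into the implicit constants in \eqref{eq:cp-J1} and \eqref{eq:cp-J3}. A related subtlety is that $\Psibf$ and $\Sbf$ must be drawn independently so that conditioning on the first-stage event does not perturb the second-stage sampling. Beyond these bookkeeping issues, the tensor-specific content is entirely handled by Theorem~\ref{thm:recursive-sketch}; everything else is classical.
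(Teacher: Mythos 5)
Your proposal is correct and follows essentially the same route as the paper's proof: condition on the recursive sketch being a $1/3$-subspace embedding, deduce $\qbf(i) \geq \tfrac{1}{3}\,\pbf(i)$ exactly as you do (the paper likewise gets $\beta = 1/3$ from the two-sided bound $\tfrac{1}{2}\ell_i \leq \tilde{\ell}_i \leq \tfrac{3}{2}\ell_i$), invoke the standard leverage-score least squares guarantee (Theorem~\ref{thm:least-squares-guarantees}), and chain the two $\delta/2$ failure probabilities. The only detail you elide is that Theorem~\ref{thm:recursive-sketch} is stated for equal mode dimensions $I^N$, so the unequal-$I_j$ case requires the padding argument the paper supplies as Corollary~\ref{corr:recursive-sketch}.
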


A proof of Theorem~\ref{thm:cp-main} is provided in Section~\ref{sec:proof-cp-main}.
It combines well-known results for leverage score sampling with an efficient leverage score estimation procedure.
The estimation procedure follows ideas by \citet{drineas2012FastApproximation} but uses the recursive sketch by \citet{ahle2020ObliviousSketching} instead of the fast Johnson--Lindenstrauss transform that \citeauthor{drineas2012FastApproximation} use.

The dependence on $R$ in \eqref{eq:cp-J3} is optimal in the sense that it cannot be improved when rows are sampled i.i.d. \citep{derezinski2018ReverseIterative}.
It is a significant improvement over the current state-of-the-art sampling-based ALS method by \citet{larsen2020PracticalLeverageBased} which requires $O(R^{N-1} \max(\log(R/\delta), 1/(\delta \varepsilon)))$ samples to achieve relative error guarantees.
The method by \citet{cheng2016SPALSFast} requires $O(R^N \log(I_n/\delta)/\varepsilon^2)$ samples and only achieves weaker additive error guarantees. 

In Sections~\ref{sec:cp-efficient-subspace-embedding} and \ref{sec:cp-sampling-scheme} we discuss how to compute the approximate solution $\tilde{\Abf}$ in Theorem~\ref{thm:cp-main} efficiently.
In Section~\ref{sec:cp-complexity-analysis} we compare the complexity of our method to that of other CP decomposition methods.

\subsubsection{Step 1: Computing $\Psibf \Abf^{\neq n}$} \label{sec:cp-efficient-subspace-embedding}
The columns of $\Abf^{\neq n}$ are Kronecker products, so applying the recursive sketch $\Psibf$ to $\Abf^{\neq n}$ efficiently is straightforward.
Let $q \defeq \lceil \log_2(N-1) \rceil$.
First, independent CountSketches $\Cbf_j$ with $J_1$ rows and an appropriate number of columns are applied:
\begin{equation} \label{eq:cp-recursive-init}
	\begin{aligned}
		\Ybf^{(0)}_j \defeq 
		\begin{cases}
			\Cbf_j \Abf^{(\vbf(j))} & \text{if } 1 \leq j \leq N-1, \\
			\Cbf_j \ebf_1 \onebf_{1 \times R} & \text{if } N-1 < j \leq 2^q,
		\end{cases} 
	\end{aligned}
\end{equation}
where $\vbf$ is defined as in Theorem~\ref{thm:cp-main} and $\onebf_{1 \times R}$ is a length-$R$ row vector of ones.
Then, independent TensorSketches are applied recursively:
\begin{equation} \label{eq:cp-recursive-m}
	\Ybf_j^{(m)} = \Tbf_j^{(m)} (\Ybf_{2j-1}^{(m-1)} \odot \Ybf_{2j}^{(m-1)}), \;\;\;\; j \in [2^{q-m}], 
\end{equation}
for $m = 1, \ldots, q$, where each $\Tbf_j^{(m)} \in \Rb^{J_1 \times J_1^2}$.
The final output is $\Ybf^{(q)}_1 = \Psibf \Abf^{\neq n}$.

\subsubsection{Step 2: Drawing Samples Efficiently} \label{sec:cp-sampling-scheme}
Since a row index $i \in [\prod_{j \neq n} I_j]$ of $\Abf^{\neq n}$ can be written as $i = \overline{i_1 \cdots i_{n-1} i_{n+1} \cdots i_N}$ where each $i_j \in [I_j]$, we can sample an index $i \in [\prod_{j \neq n} I_j]$ by sampling subindices $i_j \in [I_j]$ for each $j \neq n$.
By sampling the subindices in sequence one after another we avoid computing all entries in $\qbf$ which otherwise would cost $\Omega(\prod_{j \neq n} I_j)$.
We use an abbreviated notation to denote the probability of drawing subsequences of indices.
For example, $\Pb(i_1)$ denotes the probability that the first index is $i_1$, and $\Pb( (i_j)_{\substack{j \leq m, j \neq n}} )$ denotes the probability that the first $m$ indices (excluding the $n$th) are $i_1, \ldots, i_{n-1}, i_{n+1}, \ldots, i_m$.
\begin{lemma} \label{lem:cp-sampling-results}
	Let $\Psibf\Abf^{\neq n} = \Ubf_1 \Sigmabf_1 \Vbf_1^\top$ be a compact SVD and define $\Phibf \defeq \Vbf_1 \Sigmabf_1^{-1} (\Vbf_1 \Sigmabf_1^{-1})^\top$.
	The normalization constant for the distribution $\qbf$ with $\qbf(i) \propto \tilde{\ell}_i(\Abf^{\neq n})$ is 
	\begin{equation} \label{eq:cp-sampling-results-C}
		C 
		\defeq \sum_{i} \tilde{\ell}_i(\Abf^{\neq n}) 
		= \sum_{r, k} \Phibf(r, k) \cdot \prod_{j \neq n} (\Abf^{(j)\top} \Abf^{(j)})(r,k).
	\end{equation}
	The marginal probability of drawing $(i_j)_{\substack{j \leq m, j \neq n}}$ is
	\begin{equation} \label{eq:cp-sampling-results-joint-prob}
		\begin{aligned}
			&\Pb( (i_j)_{\substack{j \leq m, j \neq n}} ) = \frac{1}{C} \sum_{r, k} \Phibf(r, k) \\ 
			& \cdot \Big( \prod_{\substack{j \leq m \\ j \neq n}} \Abf^{(j)}(i_j, r) \Abf^{(j)}(i_j, k) \Big) \Big( \prod_{\substack{j > m \\ j \neq n}} \big( \Abf^{(j)\top} \Abf^{(j)} \big)(r, k) \Big).
		\end{aligned}
	\end{equation}
	In \eqref{eq:cp-sampling-results-C} and \eqref{eq:cp-sampling-results-joint-prob}, the summations are over $i \in [\prod_{j \neq n} I_j]$ and $r, k \in [R]$.
\end{lemma}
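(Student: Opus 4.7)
\textbf{Proof plan for Lemma~\ref{lem:cp-sampling-results}.}

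The plan is to first rewrite the approximate leverage scores as a quadratic form in $\Phibf$, then exploit the Khatri--Rao row structure of $\Abf^{\neq n}$ to factor the resulting sum across modes.

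\emph{Step 1: Quadratic form for $\tilde{\ell}_i$.} Starting from the definition \eqref{eq:u-tilde}, I observe
\[
\tilde{\ell}_i(\Abf^{\neq n}) = \|\ebf_i^\top \Abf^{\neq n} \Vbf_1 \Sigmabf_1^{-1}\|_2^2 = \ebf_i^\top \Abf^{\neq n}\,\Phibf\,(\Abf^{\neq n})^\top \ebf_i = \sum_{r,k} \Phibf(r,k)\,\Abf^{\neq n}(i,r)\,\Abf^{\neq n}(i,k).
\]

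\emph{Step 2: Row entries via Hadamard/Kronecker structure.} By the definition of the Khatri--Rao product and the indexing of \eqref{eq:cp-design-matrix}, the row of $\Abf^{\neq n}$ indexed by $i = \overline{i_{n+1}\cdots i_N i_1 \cdots i_{n-1}}$ satisfies $\Abf^{\neq n}(i,r) = \prod_{j\neq n} \Abf^{(j)}(i_j,r)$. Substituting this into the expression from Step~1 gives
\[
\tilde{\ell}_i(\Abf^{\neq n}) = \sum_{r,k} \Phibf(r,k)\,\prod_{j\neq n} \Abf^{(j)}(i_j,r)\,\Abf^{(j)}(i_j,k).
\]

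\emph{Step 3: Normalization constant $C$.} I sum the above over all $i$, equivalently over the multi-index $(i_j)_{j\neq n}$. Since the summand factors as a product over $j\neq n$, I can push the sum inside the product:
\[
C = \sum_{r,k} \Phibf(r,k)\,\prod_{j\neq n} \Big(\sum_{i_j=1}^{I_j} \Abf^{(j)}(i_j,r)\,\Abf^{(j)}(i_j,k)\Big) = \sum_{r,k} \Phibf(r,k)\,\prod_{j\neq n} (\Abf^{(j)\top}\Abf^{(j)})(r,k),
\]
which is \eqref{eq:cp-sampling-results-C}.

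\emph{Step 4: Marginal probabilities.} The marginal $\Pb((i_j)_{j\le m,\,j\neq n})$ is obtained from $\qbf(i) = \tilde{\ell}_i(\Abf^{\neq n})/C$ by summing over the remaining indices $i_j$ for $j>m$, $j\neq n$. Using exactly the same factorization as in Step~3, the sums over the free indices $i_j$ produce a Gram matrix $\Abf^{(j)\top}\Abf^{(j)}$ for each $j > m$, $j \neq n$, while the fixed indices $i_j$ for $j \leq m$, $j \neq n$ remain as literal factors $\Abf^{(j)}(i_j,r)\Abf^{(j)}(i_j,k)$. This yields exactly \eqref{eq:cp-sampling-results-joint-prob}.

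The argument is essentially bookkeeping: the only care needed is aligning the two indexing conventions (the Khatri--Rao row order in \eqref{eq:cp-design-matrix} versus the multi-index over $(i_j)_{j\neq n}$) and verifying that $\Phibf$ is symmetric so that the pairing $(r,k)$ is unambiguous. Neither point poses real difficulty, so I do not expect an essential obstacle; the key structural fact being exploited is the row-wise Hadamard structure of Khatri--Rao products, which turns the leverage score sum into a product over modes.
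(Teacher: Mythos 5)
Your proposal is correct and follows essentially the same route as the paper's proof: express $\tilde{\ell}_i(\Abf^{\neq n})$ as the diagonal of $\Abf^{\neq n}\Phibf\Abf^{\neq n\top}$, use the row-wise product structure of the Khatri--Rao design matrix, and push the sums over free subindices inside the product to obtain Gram matrices. The only nit is that you wrote the linearized row index in the tensor-ring order $\overline{i_{n+1}\cdots i_N i_1\cdots i_{n-1}}$ rather than the CP order $\overline{i_1\cdots i_{n-1}i_{n+1}\cdots i_N}$ used with \eqref{eq:cp-design-matrix}, which is immaterial to the argument.
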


The proof of Lemma~\ref{lem:cp-sampling-results} is given in Section~\ref{sec:proof-cp-sampling-results}.
We now describe the sampling procedure by first describing how to sample the first index $i_1$ (or $i_2$, if $n = 1$), followed by all subsequent indices.

\paragraph{Sampling First Index} 
Suppose $n \neq 1$.
We compute the probability of sampling $i_1$ for all $i_1 \in [I_1]$ via \eqref{eq:cp-sampling-results-joint-prob} and sample an index $i_1 \in [I_1]$ from that distribution.
If $n = 1$, we do this for the second index $i_2$ instead.

\paragraph{Sampling Subsequent Indices}
After drawing $i_1$ (or $i_2$, if $n=1$), all subsequent indices can be drawn one at a time \emph{conditionally} on the previous indices.
Suppose we have drawn indices $(i_j)_{j < m, j \neq n}$. 
The conditional distribution of $i_m$ (or $i_{m+1}$, if $n = m$) given the previously drawn indices is then
\begin{equation} \label{eq:conditional-probability-im} 
	\Pb(i_m \mid (i_j)_{j < m, j \neq n}) = \frac{\Pb((i_j)_{j \leq m, j \neq n})}{\Pb((i_j)_{j < m, j \neq n})}.
\end{equation} 
We compute the conditional probability in \eqref{eq:conditional-probability-im} for all $i_m \in [I_m]$ via \eqref{eq:cp-sampling-results-joint-prob} and draw a sample $i_m$ from that distribution.

Once the $J_2$ samples in $[\prod_{j \neq n} I_j]$ have been drawn, the matrix $\Sbf \Abf^{\neq n}$ can be computed without forming $\Abf^{\neq n}$. 
The matrix $\Sbf \Xbf_{(n)}^{\top}$ can be computed by extracting only $J_2$ rows from $\Xbf_{(n)}^\top$.

\subsubsection{Complexity Analysis} \label{sec:cp-complexity-analysis}

If $J_1$ and  $J_2$ are chosen as in \eqref{eq:cp-J1} and \eqref{eq:cp-J3}, and if we assume that $I_j = I$ for all $j \in [N]$ and ignore $\log$ factors, then the per-iteration complexity for our method CP-ALS-ES is 
$\tilde{O}( N^2 R^3 (R + N I / \varepsilon) / \delta )$.
In Table~\ref{tab:cp-complexity-comparison}, we compare this to the complexity of other ALS-based methods for CP decomposition (see Section~\ref{sec:related-work}).
Our method is the only one that does not have an exponential per-iteration dependence on $N$.
See Section~\ref{sec:detailed-complexity} for a detailed complexity analysis.

\begin{table}[ht!]
	\centering
	\caption{
		Comparison of leading order computational cost for various CP decomposition methods.  
		We ignore $\log$ factors and assume that $I_j = I$ for all $j \in [N]$.
		$\noiter$ is the number of ALS iterations.
		SPALS has an additional upfront cost of $\nnz(\Xe)$.
		\label{tab:cp-complexity-comparison}
	}
	\begin{tabular}{ll}  
		\toprule
		Method    					& Complexity 												\\
		\midrule
		CP-ALS						& $\noiter \cdot N (N+I) I^{N-1} R$ 						\\
		SPALS 						& $\noiter \cdot N (N+I) R^{N+1} / \varepsilon^2$ 			\\	
		CP-ARLS-LEV					& $\noiter \cdot N ( R + I ) R^N / (\delta \varepsilon)$	\\
		\textbf{CP-ALS-ES (our)}    & $\noiter \cdot N^2 R^3 (R + N I / \varepsilon) / \delta$	\\
		\bottomrule
	\end{tabular}
\end{table}

\subsection{Tensor Ring Decomposition} \label{sec:sampling-for-tr}

The least squares problem for TR-ALS on line~\ref{line:tr-als:ls} in Algorithm~\ref{alg:TR-ALS} also involves all entries in $\Xe$.
We use an approach similar to that for the CP decomposition, which yields the following approximation guarantees for the TR-ALS least squares problem.
\begin{theorem} \label{thm:tr-main}
	Let $\Gbf_{[2]}^{\neq n}$ be the mode-2 unfolding of the subchain tensor $\Ge^{\neq n}$ (see Definitions~\ref{def:unfolding} and \ref{def:subchain}).
	Define the vector $\wbf \defeq \begin{bmatrix}n-1, & \cdots & 1, & N, & \cdots & n+1\end{bmatrix}$ and suppose $\varepsilon, \delta \in (0,1)$.
	Suppose the estimates $\tilde{\ell}_i(\Gbf_{[2]}^{\neq n})$ are computed as in Theorem~\ref{thm:leverage-score-estimation}, with $\Psibf \in \Rb^{J_1 \times \Pi_{j \neq n} I_j}$ chosen to be a $(J_1, (I_{\wbf(j)})_{j=1}^{N-1})$-recursive sketch.
	Moreover, suppose $\Sbf \in \Rb^{J_2 \times \Pi_{j \neq n} I_j}$ is a sampling matrix with parameters $(J_2, \qbf)$ where $\qbf(i) \propto \tilde{\ell}_i(\Gbf_{[2]}^{\neq n})$.
	If 
	\begin{align}
		J_1 &\gtrsim N (R_{n-1} R_n)^2/\delta, \label{eq:tr-J1} \\
		J_2 &\gtrsim R_{n-1} R_n \, \max \big( \log(R_{n-1} R_n/\delta), 1/(\varepsilon \delta)\big), \label{eq:tr-J3}
	\end{align} 
	then $\tilde{\Gbf} \defeq \argmin_{\Gbf} \| \Sbf \Gbf_{[2]}^{\neq n} \Gbf^\top - \Sbf \Xbf_{[n]}^\top \|_\F$ satisfies the following with probability at least $1-\delta$:
	\begin{equation} \label{eq:tr-main-relative-error}
		\| \Gbf_{[2]}^{\neq n} \tilde{\Gbf}^\top - \Xbf_{[n]}^\top\|_\F \leq (1+\varepsilon) \min_{\Gbf} \|\Gbf_{[2]}^{\neq n} \Gbf^\top - \Xbf_{[n]}^\top \|_\F.
	\end{equation}
\end{theorem}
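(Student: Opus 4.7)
The plan is to follow the same three-step template behind Theorem~\ref{thm:cp-main}: first obtain a constant-distortion subspace embedding via the recursive sketch, then upgrade it to good leverage score estimates through Theorem~\ref{thm:leverage-score-estimation}, and finally invoke a standard relative-error guarantee for leverage-score-sampled least squares. The only substantive change is to replace the CP design matrix $\Abf^{\neq n}$ by the TR design matrix $\Gbf_{[2]}^{\neq n}$ and the CP rank $R$ by the effective rank $R_{n-1}R_n$, since $\Gbf_{[2]}^{\neq n} \in \Rb^{(\prod_{j \neq n} I_j) \times R_{n-1}R_n}$ has column-space dimension at most $R_{n-1}R_n$; this is the single quantity that drives every bound in the proof.

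First I would apply the recursive sketch's subspace embedding guarantee to $\Gbf_{[2]}^{\neq n}$ with a constant distortion, say $\gamma = 1/2$, and failure probability $\delta/2$. Plugging the column-space dimension $R_{n-1}R_n$ into the sketch's required embedding dimension recovers \eqref{eq:tr-J1}. Conditional on this success event, Theorem~\ref{thm:leverage-score-estimation} yields $\tfrac{1}{2}\ell_i(\Gbf_{[2]}^{\neq n}) \leq \tilde{\ell}_i(\Gbf_{[2]}^{\neq n}) \leq \tfrac{3}{2}\ell_i(\Gbf_{[2]}^{\neq n})$ for every $i$, so after normalization $\qbf(i) \propto \tilde{\ell}_i(\Gbf_{[2]}^{\neq n})$ satisfies $\qbf(i) \geq \beta\, \ell_i(\Gbf_{[2]}^{\neq n}) / \rank(\Gbf_{[2]}^{\neq n})$ for an absolute constant $\beta > 0$, making $\Sbf$ a valid leverage-score sampling matrix in the sense of Definition~\ref{def:leverage-score-sampling}.

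Next I would invoke a standard relative-error theorem for leverage-score-sampled least squares regression. With $J_2$ satisfying \eqref{eq:tr-J3} --- which, up to an absolute constant absorbing $1/\beta$, reads $J_2 \gtrsim \rank(\Gbf_{[2]}^{\neq n}) \max(\log(\rank(\Gbf_{[2]}^{\neq n})/\delta),\, 1/(\varepsilon\delta))$ --- the sampled solution $\tilde{\Gbf}$ satisfies \eqref{eq:tr-main-relative-error} with probability at least $1-\delta/2$. A union bound over the subspace-embedding and sampling failure events yields the claimed overall success probability $1-\delta$.

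The main obstacle I anticipate is the first step. Unlike $\Abf^{\neq n}$, whose columns are pure Kronecker products of factor columns, the columns of $\Gbf_{[2]}^{\neq n}$ are \emph{sums} of roughly $\prod_{j \neq n, n-1} R_j$ Kronecker products arising from the contracted bond indices of the tensor ring, so the standard Kronecker-structured analysis of \citet{ahle2020ObliviousSketching} does not apply to its columns directly. However, since the subspace-embedding property depends only on the column space of the target matrix and not on how its columns factor, the recursive sketch guarantee transfers to $\Gbf_{[2]}^{\neq n}$ once the column-space dimension is correctly identified as $R_{n-1}R_n$; the Kronecker structure matters only for efficiently applying $\Psibf$ and for the efficient marginal sampling procedure analogous to Lemma~\ref{lem:cp-sampling-results}, both of which are implementation concerns rather than obstacles to establishing \eqref{eq:tr-main-relative-error}.
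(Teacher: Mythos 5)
Your proposal follows essentially the same route as the paper's proof: establish a constant-distortion subspace embedding for $\Gbf_{[2]}^{\neq n}$ via the recursive sketch (the paper's Corollary~\ref{corr:recursive-sketch}, which as you note applies to an arbitrary matrix regardless of how its columns factor), convert it to multiplicative leverage-score estimates via Theorem~\ref{thm:leverage-score-estimation}, conclude that $\Sbf$ is a leverage score sampling matrix for $(\Gbf_{[2]}^{\neq n}, \beta)$ with constant $\beta$, and finish with the standard sampled least squares guarantee, combining the two failure events. One small slip: with $\gamma = 1/2$ Theorem~\ref{thm:leverage-score-estimation} only gives $|\ell_i - \tilde{\ell}_i| \leq \frac{\gamma}{1-\gamma}\ell_i = \ell_i$, so the lower bound on $\tilde{\ell}_i$ degenerates to zero and no positive $\beta$ follows; you need $\gamma$ strictly below $1/2$ (the paper takes $\gamma = 1/3$, which yields exactly the bounds $\tfrac{1}{2}\ell_i \leq \tilde{\ell}_i \leq \tfrac{3}{2}\ell_i$ you state).
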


A proof of Theorem~\ref{thm:tr-main} is provided in Section~\ref{sec:proof-tr-main}.
The proof uses similar steps as the proof of Theorem~\ref{thm:cp-main}.
The main difference is the structure and number of columns of the least squares design matrix.

Since $\Gbf_{[2]}^{\neq n}$ has $R_{n-1} R_n$ columns, the sample complexity in \eqref{eq:tr-J3} has optimal rank dependence in the sense discussed in Section~\ref{sec:sampling-for-cp}.
This is a significant improvement over the current state-of-the-art sampling-based ALS method by \citet{malik2021SamplingBasedMethod} which requires $O((\prod_{j} R_j^2) \max(\log(R_{n-1} R_n / \delta)),1/(\varepsilon\delta))$ samples to achieve relative error guarantees.

In Sections~\ref{sec:tr-efficient-subspace-embedding} and \ref{sec:tr-sampling-scheme} we discuss how to compute the approximate solution $\tilde{\Gbf}$ in Theorem~\ref{thm:tr-main} efficiently.
In Section~\ref{sec:tr-complexity-analysis} we compare the complexity of our method to that of other tensor ring methods.

\subsubsection{Step 1: Computing $\Psibf \Gbf_{[2]}^{\neq n}$} \label{sec:tr-efficient-subspace-embedding}

Although $\Gbf_{[2]}^{\neq n}$ has a more complicated structure than $\Abf^{\neq n}$, $\Psibf$ can still be applied efficiently to $\Gbf_{[2]}^{\neq n}$.
We describe a scheme for computing the column $\Psibf \Gbf_{[2]}^{\neq n}(:, \overline{r_{n-1} r_n})$ below, and give a more detailed motivation in Section~\ref{sec:proof-correctness-of-tr-sketch}.
Let $q \defeq \lceil \log_2 (N-1) \rceil$.
Define matrices $\Hbf^{(j)}$ for $j \in [2^q]$ as follows: 
Let $\Hbf^{(1)} \in \Rb^{I_{n-1} \times R_{n-2}}$ be a matrix with columns $\Hbf^{(1)}(:, k) \defeq \Gbf_{[2]}^{(n-1)}(:, \overline{r_{n-1} k})$ for $k \in [R_{n-2}]$.
Let $\Hbf^{(j)} \defeq \Gbf_{[2]}^{(\wbf(j))} \in \Rb^{I_{\wbf(j)} \times R_{\wbf(j)} R_{\wbf(j)-1}}$ for $2 \leq j \leq N-2$.
Let $\Hbf^{(N-1)} \in \Rb^{I_{n+1} \times R_{n+1}}$ be a matrix with columns $\Hbf^{(N-1)}(:, k) \defeq \Gbf_{[2]}^{(n+1)}(:, \overline{k r_n})$ for $k \in [R_{n+1}]$.
Let $\Hbf^{(j)} \defeq \ebf_1 \in \Rb^{\max_{j \neq n} I_j}$ be a column vector for $N \leq j \leq 2^q$.
Next, define
\begin{align}
	\Ybf_j^{(0)} &\defeq \Cbf_j \Hbf^{(j)}, \;\; j \in [2^q], \label{eq:Y0j}\\
	K_j^{(0)} &\defeq
	\begin{cases}
		R_{\wbf(j)} 	& \text{if } 2 \leq j \leq N-1, \\ 
		1 				& \text{if } j = 1 \text{ or } N \leq j \leq 2^q+1.
	\end{cases}
\end{align}
The TensorSketch matrices are then applied recursively as follows.
For each $m = 1, \ldots, q$, compute
\begin{equation} \label{eq:Ymj}
	\begin{aligned} 
		&\Ybf_j^{(m)}(:, \overline{k_{1} k_{3}}) = \\
		&\sum_{k_{2} \in [K_{2j}^{(m-1)}]} \Tbf_j^{(m)} \big( \Ybf^{(m-1)}_{2j-1}(:, \overline{k_{1} k_{2}}) \otimes \Ybf^{(m-1)}_{2j}(:, \overline{k_{2} k_{3}}) \big)\\
	\end{aligned}
\end{equation}
for each $k_1 \in [K^{(m-1)}_{2j-1}], k_3 \in [K^{(m-1)}_{2j+1}], j \in [2^{q-m}]$. For each $m = 1, \ldots, q$, also compute 
\begin{equation}
	K_{j}^{(m)} \defeq K_{2j-1}^{(m-1)}, \;\; j \in [2^{q-m}+1].
\end{equation}
We prove the following in Section~\ref{sec:proof-correctness-of-tr-sketch}.
\begin{lemma} \label{lem:correctness-of-tr-sketch}
	$\Ybf_1^{(q)}$ satisfies $\Ybf_1^{(q)} = \Psibf \Gbf_{[2]}^{\neq n}(:, \overline{r_{n-1} r_n})$.
\end{lemma}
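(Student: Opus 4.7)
The strategy is to first write the target column $\vbf \defeq \Gbf_{[2]}^{\neq n}(:,\overline{r_{n-1}r_n})$ as an explicit sum of Kronecker products of core slices, and then to show by induction on the level $m$ that each $\Ybf_j^{(m)}$ stores, in its columns indexed by its two outstanding boundary ranks, the recursive sketch (through level $m$) of a partial contraction of that decomposition.

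First I would unfold Definition~\ref{def:subchain} and track the index ordering of the mode-2 unfolding to verify that
$$\vbf = \sum_{r_l : l \neq n-1,n} \; \bigotimes_{j=1}^{N-1} \Ge^{(\wbf(j))}(r_{\wbf(j)-1},:,r_{\wbf(j)}),$$
with the Kronecker factors listed in the order dictated by $\overline{i_{n+1}\cdots i_N i_1\cdots i_{n-1}}$. A short check then shows that for $2 \leq l \leq N-2$, $\Hbf^{(l)}(:,\overline{k_1 k_2}) = \Ge^{(\wbf(l))}(k_2,:,k_1)$, while for $l=1$ and $l=N-1$ one of the two rank indices is frozen to $r_{n-1}$ or $r_n$, respectively. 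The padding matrices $\Hbf^{(l)} = \ebf_1$ for $N \leq l \leq 2^q$ exactly match the $\ebf_1^{\otimes(2^q-N+1)}$ padding built into the definition of $\Psibf$. The cyclic contractions are realized by identifying the second index of $\Hbf^{(l)}$ with the first index of $\Hbf^{(l+1)}$, which is consistent because $\wbf(l)-1 = \wbf(l+1)$.

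With this setup, the core claim is an induction on $m \in \{0,1,\ldots,q\}$ asserting that for each $j \in [2^{q-m}]$,
$$\Ybf_j^{(m)}(:,\overline{k_1 k_3}) = \Psibf_j^{(m)} \Big( \sum_{\text{interior } k_l} \; \bigotimes_{l=(j-1)2^m+1}^{j\cdot 2^m} \Hbf^{(l)}(:,\overline{k_l k_{l+1}}) \Big),$$
where the outer indices are pinned to $k_{(j-1)2^m+1}=k_1$ and $k_{j\cdot 2^m +1}=k_3$, and $\Psibf_j^{(m)} \defeq \Tbf_j^{(m)}(\Tbf_{2j-1}^{(m-1)} \otimes \Tbf_{2j}^{(m-1)}) \cdots (\bigotimes_l \Cbf_l)$ is the sub-sketch rooted at block $j$ of level $m$, matching the factorization in \eqref{eq:recursive-sketch-factorization}. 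The base case $m=0$ is exactly \eqref{eq:Y0j}. For the inductive step, I would substitute the hypothesis into \eqref{eq:Ymj}, pull the interior sums outside $\Tbf_j^{(m)}$ by linearity, apply the mixed-product property $(\Abf\otimes\Bbf)(\Cbf\otimes\Dbf)=(\Abf\Cbf)\otimes(\Bbf\Dbf)$ to merge $\Psibf_{2j-1}^{(m-1)}\otimes\Psibf_{2j}^{(m-1)}$ with the two partial Kronecker products, and finally identify $\Tbf_j^{(m)}(\Psibf_{2j-1}^{(m-1)}\otimes\Psibf_{2j}^{(m-1)}) = \Psibf_j^{(m)}$.

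At the top level $m=q$ the outer boundaries collapse since $K_1^{(q)}=K_2^{(q)}=1$, we have $\Psibf_1^{(q)} = \Psibf$, and the sum over all interior $k_l$'s reproduces $\vbf$ by the first step, giving $\Ybf_1^{(q)} = \Psibf \vbf$. The main obstacle is the index bookkeeping: verifying that the order of Kronecker factors in the first-step expansion matches the $\overline{k_l k_{l+1}}$ labelling of $\Hbf^{(l)}$'s columns, that the cyclic boundary fixings at $l=1$ and $l=N-1$ are absorbed without an off-by-one, and that the $\ebf_1$ padding cleanly reproduces the non-power-of-two case of $\Psibf$. Once these conventions are pinned down, the inductive algebra is a routine application of the mixed-product property.
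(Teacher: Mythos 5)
Your proposal is correct and follows essentially the same route as the paper's proof: expand the column of $\Gbf_{[2]}^{\neq n}$ as a sum of Kronecker products of (reindexed) core slices, use the factorization \eqref{eq:recursive-sketch-factorization} together with Lemma~\ref{lemma:kronecker-property-1} to push the CountSketches into the factors, and then induct over the levels $m$, pulling the interior contraction sums through the TensorSketches by linearity. The only cosmetic difference is that you state the inductive invariant locally per block (each $\Ybf_j^{(m)}$ equals a block sub-sketch of a partial contraction), whereas the paper keeps a single global identity of the form \eqref{eq:justification-tr-sketch-5}; these are equivalent.
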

The entire matrix $\Psibf \Gbf_{[2]}^{\neq n}$ can be computed by repeating the steps above for each column $\overline{r_{n-1} r_n} \in [R_{n-1} R_n]$.

\subsubsection{Step 2: Drawing Samples Efficiently} \label{sec:tr-sampling-scheme} 

The sampling approach for the tensor ring decomposition is similar to the approach for the CP decomposition which we described in Section~\ref{sec:cp-sampling-scheme}.
\begin{lemma} \label{lem:tr-sampling-results}
	Let $\Psibf \Gbf_{[2]}^{\neq n} = \Ubf_1 \Sigmabf_1 \Vbf_1^\top$ be a compact SVD and define  $\Phibf \defeq \Vbf_1 \Sigmabf_1^{-1} (\Vbf_1 \Sigmabf_1^{-1})^\top$.
	The normalization constant for the distribution $\qbf$ with $\qbf(i) \propto \tilde{\ell}_i(\Gbf_{[2]}^{\neq n})$ is 
	\begin{equation} \label{eq:tr-sampling-results-C}
		\begin{aligned}
			&C \defeq \sum_{i} \tilde{\ell}_i(\Gbf_{[2]}^{\neq n}) = \sum_{\substack{r_1,\ldots,r_N \\ k_1, \ldots, k_N}} \Phibf(\overline{r_{n-1} r_n}, \overline{k_{n-1} k_n}) \\
			&\cdot \prod_{j \neq n} \big(\Gbf_{[2]}^{(j)\top} \Gbf_{[2]}^{(j)}\big)(\overline{r_{j} r_{j-1}}, \overline{k_{j} k_{j-1}}).
		\end{aligned}	
	\end{equation}
	The marginal probability of drawing $(i_j)_{j \leq m, j \neq n}$ is
	\begin{equation} \label{eq:tr-sampling-results-joint-prob}
		\begin{aligned}
			&\Pb( (i_j)_{j \leq m, j \neq n} ) = \frac{1}{C} \sum_{\substack{r_1, \ldots, r_N \\ k_1, \ldots, k_N}} \Phibf(\overline{r_{n-1} r_n}, \overline{k_{n-1} k_n}) \\ 
			& \cdot \Big( \prod_{\substack{j \leq m \\ j \neq n}} \Gbf_{[2]}^{(j)}(i_j, \overline{r_{j} r_{j-1}}) \Gbf_{[2]}^{(j)}(i_j, \overline{k_{j} k_{j-1}}) \Big) \\
			& \cdot \Big( \prod_{\substack{j > m \\ j \neq n}} \big( \Gbf_{[2]}^{(j)\top} \Gbf_{[2]}^{(j)} \big)(\overline{r_{j} r_{j-1}}, \overline{k_{j} k_{j-1}}) \Big).
		\end{aligned}
	\end{equation}
	In \eqref{eq:tr-sampling-results-C} and \eqref{eq:tr-sampling-results-joint-prob}, the summations are over $i \in [\prod_{j \neq n} I_j]$ and $r_j, k_j \in [R_j]$ for each $j \in [N]$.
\end{lemma}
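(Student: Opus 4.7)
The plan is to unwind the definition of the approximate leverage score from Theorem~\ref{thm:leverage-score-estimation}, then expand each row of $\Gbf_{[2]}^{\neq n}$ using Definition~\ref{def:subchain} and collapse the $i$-summations using Fubini. Writing $\Mbf \defeq \Vbf_1 \Sigmabf_1^{-1}$ so that $\Phibf = \Mbf \Mbf^\top$, Theorem~\ref{thm:leverage-score-estimation} gives
\begin{equation*}
    \tilde{\ell}_i(\Gbf_{[2]}^{\neq n})
    = \| \ebf_i^\top \Gbf_{[2]}^{\neq n} \Mbf \|_2^2
    = \ebf_i^\top \Gbf_{[2]}^{\neq n} \Phibf (\Gbf_{[2]}^{\neq n})^\top \ebf_i,
\end{equation*}
which I would expand as a double sum over the column indices $\overline{r_{n-1} r_n}$ and $\overline{k_{n-1} k_n}$ of $\Gbf_{[2]}^{\neq n}$, with the kernel $\Phibf(\overline{r_{n-1} r_n}, \overline{k_{n-1} k_n})$.

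Next I would replace each entry of $\Gbf_{[2]}^{\neq n}$ by its explicit form. Combining Definition~\ref{def:unfolding} and Definition~\ref{def:subchain}, the entry at row $i = \overline{i_{n+1} \cdots i_N i_1 \cdots i_{n-1}}$ and column $\overline{r_{n-1} r_n}$ factors as $\sum_{\{r_j : j \neq n-1, n\}} \prod_{j \neq n} \Ge^{(j)}(r_{j-1}, i_j, r_j)$, and the identity $\Gbf_{[2]}^{(j)}(i_j, \overline{r_j r_{j-1}}) = \Ge^{(j)}(r_{j-1}, i_j, r_j)$ rewrites the factors in terms of mode-$2$ unfoldings. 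After this substitution, each subindex $i_j$ (for $j \neq n$) appears exactly twice, once in a $\Gbf_{[2]}^{(j)}(i_j, \overline{r_j r_{j-1}})$ factor coming from the $r$-expansion and once in a $\Gbf_{[2]}^{(j)}(i_j, \overline{k_j k_{j-1}})$ factor coming from the $k$-expansion.

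For the normalization constant \eqref{eq:tr-sampling-results-C}, I would interchange the summation over $i \in [\prod_{j \neq n} I_j]$ with the $(r, k)$-sum and use the fact that $\sum_{i_j} \Gbf_{[2]}^{(j)}(i_j, \overline{r_j r_{j-1}}) \Gbf_{[2]}^{(j)}(i_j, \overline{k_j k_{j-1}}) = (\Gbf_{[2]}^{(j)\top} \Gbf_{[2]}^{(j)})(\overline{r_j r_{j-1}}, \overline{k_j k_{j-1}})$ to obtain the advertised product form. For the marginal \eqref{eq:tr-sampling-results-joint-prob}, the only change is that I sum only over those $i_j$ with $j > m$ and $j \neq n$; those summations produce the Gram-matrix factors in the second product, while the fixed indices $i_j$ (for $j \leq m$, $j \neq n$) remain as plain $\Gbf_{[2]}^{(j)}(i_j, \cdot)$ entries in the first product, giving exactly the claimed formula.

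The calculation is essentially bookkeeping: the only thing that requires care is the reversed ordering built into the unfoldings (the column index of $\Gbf_{[2]}^{(j)}$ uses $\overline{r_j r_{j-1}}$ rather than $\overline{r_{j-1} r_j}$, and the row index of $\Gbf_{[2]}^{\neq n}$ uses $\overline{i_{n+1} \cdots i_{n-1}}$) together with the cyclic boundary condition $r_0 = r_N$, $k_0 = k_N$. Once the indices are aligned so that each $r_j$ and each $k_j$ ranges over $[R_j]$ and each $i_j$ contracts against the appropriate two factors, verifying \eqref{eq:tr-sampling-results-C} and \eqref{eq:tr-sampling-results-joke-prob}—I mean \eqref{eq:tr-sampling-results-joint-prob}—reduces to a single application of Fubini; no probabilistic or analytic tools beyond Theorem~\ref{thm:leverage-score-estimation} are needed.
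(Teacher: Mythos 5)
Your proposal is correct and follows essentially the same route as the paper's proof: expand $\tilde{\ell}_i(\Gbf_{[2]}^{\neq n}) = \ebf_i^\top \Gbf_{[2]}^{\neq n} \Phibf \Gbf_{[2]}^{\neq n\top} \ebf_i$ over the column indices $\overline{r_{n-1}r_n}, \overline{k_{n-1}k_n}$, substitute the subchain formula $\Gbf_{[2]}^{\neq n}(i, \overline{r_{n-1}r_n}) = \sum_{\{r_j\}_{j\neq n-1,n}} \prod_{j\neq n} \Gbf_{[2]}^{(j)}(i_j, \overline{r_j r_{j-1}})$, then interchange summations so that each free $i_j$ contracts into a Gram-matrix entry $(\Gbf_{[2]}^{(j)\top}\Gbf_{[2]}^{(j)})(\overline{r_j r_{j-1}}, \overline{k_j k_{j-1}})$, with the marginal obtained by summing only over $i_j$ for $j>m$, $j\neq n$. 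Your attention to the reversed index ordering and the cyclic boundary condition $r_0=r_N$, $k_0=k_N$ is exactly the bookkeeping the paper's proof carries out.
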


The proof of Lemma~\ref{lem:tr-sampling-results} is given in Section~\ref{sec:proof-tr-sampling-results}.
The sampling procedure itself is the same as for the CP decomposition.
The distribution $(\Pb(i_1))_{i_1=1}^{I_1}$ is computed via \eqref{eq:tr-sampling-results-joint-prob} and an index $i_1$ is sampled; if $n=1$, these computations are done for $i_2$ instead of $i_1$. 
All subsequent indices are then sampled conditionally on the previous indices.
This is done by computing the conditional distribution in \eqref{eq:conditional-probability-im} by using \eqref{eq:tr-sampling-results-joint-prob}.
The expression in \eqref{eq:tr-sampling-results-joint-prob} can be computed efficiently despite the exponential number of terms in the summation; see Remark~\ref{remark:computing-tr-probabilities} for details.

Once the $J_2$ samples in $[\prod_{j \neq n} I_j]$ have been drawn, the matrix $\Sbf \Gbf_{[2]}^{\neq n}$ can be computed without forming $\Gbf_{[2]}^{\neq n}$.
We describe this in detail in Remark~\ref{remark:computing-sketched-tr-design-matrix}. 
The matrix $\Sbf \Xbf_{[2]}^\top$ can be computed by extracting only $J_2$ rows from $\Xbf_{[2]}^\top$.

\subsubsection{Complexity Analysis} \label{sec:tr-complexity-analysis}

If $J_1$ and $J_2$ are chosen as in \eqref{eq:tr-J1} and \eqref{eq:tr-J3}, and if we assume that $R_j = R$ and $I_j = I$ for all $j \in [N]$ and ignore $\log$ factors, then the per-iteration complexity for our method TR-ALS-ES is $\tilde{O}( N^3 R^9/\delta + N^3 I R^8/(\varepsilon \delta) )$.
In Table~\ref{tab:tr-complexity-comparison}, we compare this to the complexity of several other methods for tensor ring decomposition (see Section~\ref{sec:related-work}).
rTR-ALS refers to the method by \citet{yuan2019RandomizedTensor} with each $I$ compressed to $K$ and with TR-ALS as the deterministic algorithm. 
TR-SVD-Rand refers to Algorithm~7 in \citet{ahmadi-asl2020RandomizedAlgorithms}.
Our method is the only one that does not have an explicit exponential dependence on $N$.
See Section~\ref{sec:detailed-complexity} for a detailed complexity analysis. 

\begin{table}[ht!]
	\centering
	\caption{
		Comparison of leading order computational cost for various tensor ring decomposition methods. 
		We ignore $\log$ factors and assume that $R_j = R$ and $I_j = I$ for all $j \in [N]$.
		$\noiter$ is the number of ALS iterations.
		\label{tab:tr-complexity-comparison}
	}
	\begin{tabular}{ll}  
		\toprule
		Method    						& Complexity \\
		\midrule
		TR-ALS							& $\noiter \cdot N I^N R^2$ \\
		rTR-ALS							& $N I^N K + \noiter \cdot N K^N R^2$ \\
		TR-SVD							& $I^{N+1} + I^N R^3$ \\
		TR-SVD-Rand						& $I^N R^2$ \\
		TR-ALS-Sampled						& $\noiter \cdot N I R^{2N+2} / (\varepsilon \delta)$	\\
		\textbf{TR-ALS-ES (our)}	   	& $\noiter \cdot N^3 R^8 ( R + I / \varepsilon)/\delta$	\\
		\bottomrule
	\end{tabular}
\end{table}

\section{Experiments} \label{sec:experiments}

The experiments are run in Matlab R2021b on a laptop computer with an Intel Core i7-1185G7 CPU and 32 GB of RAM.
Our code is available at \url{https://github.com/OsmanMalik/TD-ALS-ES}.
Additional experiment details are in Section~\ref{sec:additional-experiments}.

\subsection{Sampling Distribution Comparison} \label{sec:sampling-ditribution-comparison}

We first compare the sampling distributions used by our methods with those used by the previous state-of-the-art---CP-ARLS-LEV by \citet{larsen2020PracticalLeverageBased} for the CP decomposition and TR-ALS-Sampled by \citet{malik2021SamplingBasedMethod} for the tensor ring decomposition---when solving the least squares problems in \eqref{eq:cp-optimization-An} and \eqref{eq:tr-optimization-Gn}.
We run standard CP-ALS and TR-ALS on a real data tensor to get realistic factor matrices and core tensors when defining the design matrices $\Abf^{\neq n}$ and $\Gbf_{[2]}^{\neq n}$. 
We get the real data tensor $\Xe \in \Rb^{16 \times \cdots \times 16}$ by reshaping a $4096 \times 4096$ gray scale image of a tabby cat into a 6-way tensor and then appropriately permuting the modes, a process called visual data tensorization \citep{yuan2019HighorderTensor}.
We then consider the least squares problems corresponding to an update of the 6th factor matrix or core tensor.
As a performance measure, we compute the KL-divergence of the approximate distribution $\qbf$ from the exact leverage score sampling distribution $\pbf$ in Definition~\ref{def:leverage-score-sampling}.
Tables~\ref{tab:KL-div-CP} and \ref{tab:KL-div-TR} report the results for different $J_1$ and ranks.
The results show that our methods sample from a distribution much closer to the exact leverage score distribution when $J_1$ is as small as $J_1 = 1000$.
See Figures~\ref{fig:CP-rank-10-probability-comparison}--\ref{fig:TR-rank-5-probability-comparison} for a graphical comparison.

\begin{table}[ht!]
	\centering
	\caption{
		KL-divergence (lower is better) of the approximated sampling distribution from the exact one for a CP-ALS least squares problem \eqref{eq:cp-optimization-An} with target rank $R$.
		\label{tab:KL-div-CP}
	}
	\begin{tabular}{lll}  
		\toprule
		Method & $R=10$ & $R=20$ \\
		\midrule
		CP-ARLS-LEV 						& 0.2342 & 0.1853 \\
		CP-ALS-ES ($J_1 = \text{1e+4}$)		& 0.0005 & 0.0006 \\
		CP-ALS-ES ($J_1 = \text{1e+3}$) 	& 0.0151 & 0.0070 \\
		CP-ALS-ES ($J_1 = \text{1e+2}$) 	& 0.1416 & 0.2173 \\
		\bottomrule
	\end{tabular}
\end{table}

\begin{table}[ht!]
	\centering
	\caption{
		KL-divergence (lower is better) of the approximated sampling distribution from the exact one for a TR-ALS least squares problem \eqref{eq:tr-optimization-Gn-matrix} with target rank $(R,\ldots,R)$.
		\label{tab:KL-div-TR}
	}
	\begin{tabular}{lll}  
		\toprule
		Method & $R=3$ & $R=5$ \\
		\midrule
		TR-ALS-Sampled 						& 0.3087 & 0.1279 \\
		TR-ALS-ES ($J_1 = \text{1e+4}$) 	& 0.0005 & 0.0007 \\
		TR-ALS-ES ($J_1 = \text{1e+3}$) 	& 0.0076 & 0.0070 \\
		TR-ALS-ES ($J_1 = \text{1e+2}$) 	& 0.1565 & 0.1831 \\
		\bottomrule
	\end{tabular}
\end{table}

\subsection{Feature Extraction} \label{sec:feature-extraction}

Next, we run a benchmark feature extraction experiment on the COIL-100 image dataset \citep{nene1996ColumbiaObject} with a setup similar to that in \citet{zhao2016TensorRing} and \citet{malik2021SamplingBasedMethod}.
The data consists of 7200 color images of size $128 \times 128$ pixels, each belonging to one of 100 different classes.
The data is arranged into a $128 \times 128 \times 3 \times 7200$ tensor which is decomposed using either a rank-25 CP decomposition or a rank-$(5,5,5,5)$ tensor ring decomposition.
The mode-4 factor matrix or core tensor is then used as a feature matrix in a $k$-NN algorithm with $k=1$ and 10-fold cross validation.
For our CP-ALS-ES, we use $J_1 = 10000$ and $J_2 = 2000$, and for our TR-ALS-ES we use $J_1 = 10000$ and $J_2 = 1000$.
For the CP decomposition, we compare against CP-ALS in Tensor Toolbox \citep{bader2006Algorithm862, bader2021MATLABTensor}; CPD-ALS, CPD-MINF and CPD-NLS in Tensorlab \citep{vervliet2016Tensorlab}; and our own implementation of CP-ARLS-LEV.
For the tensor ring decomposition, we compare against the implementations of TR-ALS and TR-ALS-Sampled provided by \citet{malik2021SamplingBasedMethod}.
For CP-ARLS-LEV and TR-ALS-Sampled we use 2000 and 1000 samples, respectively.
All iterative methods are run for 40 iterations.

Table~\ref{tab:classification-results} shows the average decomposition time, decomposition error, and classification accuracy for the various methods over 10 repetitions of the experiment\footnote{TR-ALS was only run once due to how long it takes to run.}.
For the CP decomposition, the two randomized methods are faster than the competing methods.
Our method takes about as long to run as CP-ARLS-LEV.
This does not contradict our earlier analysis, which was a \emph{worst-case} analysis. 
Real-world datasets are typically better behaved than the worst case, which is why CP-ARLS-LEV requires no more samples than CP-ALS-ES in this example.
For the tensor ring decomposition, the randomized methods are substantially faster than the deterministic one. 
Our method is a bit slower here than TR-ALS-Sampled.
All methods achieve good classification accuracy and similar decomposition errors.

\begin{table}[ht!]
	\centering
	\caption{
		Run time, decomposition error and classification accuracy when using tensor decomposition for feature extraction.
		\label{tab:classification-results}
	}
	\begin{tabular}{lrrr}  
		\toprule
		Method    						& Time (s) & Err.\ & Acc.\ (\%) \\
		\midrule
		CP-ALS (Ten.\ Toolbox)			&  43.6 & 0.31 & 99.2 \\
		CPD-ALS (Tensorlab)				&  68.4 & 0.31 & 99.0 \\
		CPD-MINF (Tensorlab)			& 102.3 & 0.34 & 99.7 \\
		CPD-NLS (Tensorlab)				& 107.8 & 0.31 & 92.1 \\
		CP-ARLS-LEV						&  28.7 & 0.32 & 98.5 \\
		\textbf{CP-ALS-ES (our)}		&  27.9 & 0.32 & 98.3 \\
		\midrule
		TR-ALS							& 9813.7 & 0.31 & 99.3 \\
		TR-ALS-Sampled					&    9.9 & 0.33 & 98.5 \\
		\textbf{TR-ALS-ES (our)}	   	&   28.5 & 0.33 & 98.0 \\
		\bottomrule
	\end{tabular}
\end{table}

\begin{remark}
	If $k$-NN was applied directly to the uncompressed images its cost would scale linearly or worse with the number of tensor entries.
	Due to the sublinear per-iteration complexity of our proposed methods, the cost of the entire decomposition is sublinear if the number of iterations is chosen appropriately.
	While fixing the number of iterations is not guaranteed to produce a good decomposition, we expect this to work well on typical datasets.
	Once the decomposition is computed each image has a representation of much lower dimension which makes applying $k$-NN cheaper.
	This leads to a reduction in the overall classification cost.
	See Section~\ref{sec:supp-feature-extraction-experiments} for a discussion on handling new images that were not part of the initial decomposition.
\end{remark}

\subsection{Demonstration of Improved Complexity}

We construct a synthetic 10-way tensor that demonstrates the improved sampling and computational complexity of our proposed CP-ALS-ES over CP-ARLS-LEV.
It is constructed via \eqref{eq:cp-decomposition} from factor matrices $\Abf^{(n)} \in \Rb^{6 \times 4}$ for $n \in [10]$ with $\Abf^{(n)}(1,1) = 4$, each $\Abf^{(n)}(i,j)$ drawn i.i.d.\ from a Gaussian distribution for $2 \leq i,j \leq 6$, and all other entries zero.
Additionally, i.i.d.\ Gaussian noise with standard deviation $0.01$ is added to all entries of the tensor.
Both methods are run for 20 iterations with a target rank of 4 and are initialized using the randomized range finding approach proposed by \citet{larsen2020PracticalLeverageBased}, Appendix~F.
CP-ARLS-LEV requires $J=6^8 \approx 1.7\text{e+6}$ samples to get an accurate solution, taking 350 seconds.
By contrast, our CP-ALS-ES only requires a recursive sketch size of $J_1 = 1000$ and $J_2 = 50$ samples to get an accurate solution, taking only 4.8 seconds. 
Our method improves the sampling complexity and compute time by 4 and almost 2 orders of magnitude, respectively.
A similar example for the tensor ring decomposition is provided in Section~\ref{sec:demonstration-improvement-TR}.

\section{Discussion and Conclusion} \label{sec:conclusion}

We have shown that it is possible to construct ALS algorithms with guarantees for both the CP and tensor ring decompositions of an $N$-way tensor with a per-iteration cost which does not depend exponentially on $N$.
In the regime of high-dimensional tensors (i.e., with many modes), this is a substantial improvement over the previous state-of-the-art which had a per-iteration cost of $\Omega(R^{N+1})$ and $\Omega(R^{2N + 2})$ for the CP and tensor ring decompositions, respectively, where $R$ is the relevant notion of rank.

We again want to emphasize that this paper considers \emph{worst-case} guarantees.
As we saw in Section~\ref{sec:experiments}, real datasets typically behave better than the worst case.
For such datasets, CP-ARLS-LEV and TR-ALS-Sampled usually yield good results even with fewer (i.e., not exponential in $N$) number of samples than what worst-case analysis might suggest.
In such cases, those methods can be faster than the methods we propose in this paper.
Nonetheless, we believe that our methods are still useful for those cases when worst-case performance is critical.

We also want to point out that, unlike their deterministic counterparts, our methods cannot guarantee a monotonically decreasing objective value.
The other randomized methods we compare with have the same deficiency.

The sampling formulas \eqref{eq:cp-sampling-results-joint-prob} and \eqref{eq:tr-sampling-results-joint-prob} are sums of products where each feature matrix or core tensor (except the $n$th) appears twice.
This can exacerbate issues with ill-conditioned factor matrices or core tensors.
A particular concern is that catastrophic cancellation can occur, which will prevent accurate computation of the probabilities.
Addressing this issue is an interesting direction for future research.

All the experiments in this paper involve dense tensors.
However, our methods can also be applied to sparse tensors with only minor adjustments to the code.
Thorough empirical evaluation of our methods on sparse tensors is therefore another interesting direction for future research.

\section*{Acknowledgements}

We thank the anonymous reviewers for their feedback which helped improve the paper.
This work was supported by the Laboratory Directed Research and Development Program of Lawrence Berkeley National Laboratory under U.S.\ Department of Energy Contract No.\ DE-AC02-05CH11231.

\bibliography{library}
\bibliographystyle{icml2022}

\newpage
\appendix
\onecolumn

\section{Further Details on Notation} \label{sec:notation}

In this section we provide some further details on the notation used in this paper to help make the paper self contained.
The \emph{Kronecker product} of two matrices $\Abf \in \Rb^{m \times n}$ and $\Bbf \in \Rb^{k \times \ell}$ is denoted by $\Abf \otimes \Bbf \in \Rb^{mk \times n \ell}$ and is defined as
\begin{equation}
	\Abf \otimes \Bbf \defeq
	\begin{bmatrix}
		\Abf(1,1) \Bbf & \Abf(1,2) \Bbf & \cdots & \Abf(1,n) \Bbf \\
		\Abf(2,1) \Bbf & \Abf(2,2) \Bbf & \cdots & \Abf(2,n) \Bbf \\
		\vdots & \vdots & & \vdots \\
		\Abf(m,1) \Bbf & \Abf(m,2) \Bbf & \cdots & \Abf(m,n) \Bbf \\
	\end{bmatrix}.
\end{equation}
The \emph{Khatri--Rao product}, sometimes called the columnwise Kronecker product, of two matrices $\Abf \in \Rb^{m \times n}$ and $\Bbf \in \Rb^{k \times n}$ is denoted by $\Abf \odot \Bbf \in \Rb^{mk \times n}$ and is defined as
\begin{equation}
	\Abf \odot \Bbf \defeq
	\begin{bmatrix}
		\Abf(:,1) \otimes \Bbf(:,1) & \Abf(:,2) \otimes \Bbf(:,2) & \cdots & \Abf(:,n) \otimes \Bbf(:,n)
	\end{bmatrix}.
\end{equation}

It is not obvious how to extend the standard asymptotic notation for single-variable functions to multi-variable functions \citep{howell2008AsymptoticNotation}.
Suppose $f$ and $g$ are positive functions over some parameters $x_1, \ldots, x_n$.
We say that a function $f(x_1,\ldots,x_n)$ is $O(g(x_1, \ldots, x_n))$ if there exists a constant $C > 0$ such that $f(x_1, \ldots, x_n) \leq C g(x_1, \ldots, x_n)$ for all valid values of the parameters $x_1, \ldots, x_n$.
The notation $\tilde{O}$ means the same as $O$ but with polylogarithmic factors ignored.
We say that a function $f(x_1, \ldots, x_n)$ is $\Omega(g(x_1, \ldots, x_n))$, or alternatively write $f(x_1, \ldots, x_n) \gtrsim g(x_1, \ldots, x_n)$, if there exists a constant $C > 0$ such that $f(x_1, \ldots, x_n) \geq C g(x_1, \ldots, x_n)$ for all valid values of the parameters $x_1, \ldots, x_n$.

\section{Missing Proofs} \label{sec:proofs}

\subsection{Proof of Theorem~\ref{thm:leverage-score-estimation}} \label{sec:proof-leverage-score-estimation}

We first state and prove Lemma~\ref{lemma:subspace-embedding-properties}.
It is similar to Lemma~5 in \citet{drineas2012FastApproximation} and Lemma~4.1 in \citet{drineas2006SamplingAlgorithms} which consider the case when $\Psibf$ is a fast Johnson--Lindenstrauss transform and a sampling matrix, respectively, instead of a subspace embedding.
\begin{lemma} \label{lemma:subspace-embedding-properties}
	Consider a matrix $\Abf \in \Rb^{I \times R}$ where $I > R$. 
	Let $\Abf = \Ubf \Sigmabf \Vbf^{\top}$ be a compact SVD of $\Abf$.
	Suppose $\Psibf$ is a $\gamma$-subspace embedding for $\Abf$ with $\gamma \in (0,1)$, and let $\Psibf \Ubf = \Qbf \Lambdabf \Wbf^\top$ be a compact SVD.
	Then, the following hold:
	\begin{enumerate}[(i)]
		\item $\rank(\Psibf \Abf) = \rank(\Psibf \Ubf) = \rank(\Abf)$,
		\item $\| \Ibf - \Lambdabf^{-2} \|_2 \leq \gamma/(1-\gamma)$,
		\item $(\Psibf \Abf)^\dagger = \Vbf \Sigmabf^{-1} (\Psibf \Ubf)^\dagger$.
	\end{enumerate}
\end{lemma}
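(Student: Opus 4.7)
The plan is to rewrite the $\gamma$-subspace embedding condition in a form that works directly with $\Ubf$, and then bootstrap all three parts from this reformulation. Writing $\Abf = \Ubf\Sigmabf\Vbf^\top$, as $\xbf$ ranges over $\Rb^R$ the vector $\ybf \defeq \Sigmabf\Vbf^\top\xbf$ ranges over all of $\Rb^{\rank(\Abf)}$ (because $\Sigmabf$ is invertible and $\Vbf^\top$ has full row rank). Since the columns of $\Ubf$ are orthonormal, $\|\Abf\xbf\|_2 = \|\Ubf\ybf\|_2 = \|\ybf\|_2$, so the embedding condition becomes the equivalent statement
\begin{equation*}
	(1-\gamma)\|\ybf\|_2^2 \;\leq\; \|\Psibf\Ubf\ybf\|_2^2 \;\leq\; (1+\gamma)\|\ybf\|_2^2 \quad \text{for all } \ybf \in \Rb^{\rank(\Abf)}.
\end{equation*}
This is the workhorse inequality for the rest of the proof.

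For part (i), the lower bound above together with $\gamma < 1$ forces $\Psibf\Ubf\ybf = 0 \Rightarrow \ybf = 0$, so $\Psibf\Ubf$ has trivial nullspace and therefore full column rank $\rank(\Abf)$. Factoring $\Psibf\Abf = (\Psibf\Ubf)(\Sigmabf\Vbf^\top)$ as a product of a matrix with full column rank and a matrix with full row rank $\rank(\Abf)$ gives $\rank(\Psibf\Abf) = \rank(\Abf)$ as well. For part (ii), substitute the compact SVD $\Psibf\Ubf = \Qbf\Lambdabf\Wbf^\top$ to obtain $\|\Psibf\Ubf\ybf\|_2^2 = \ybf^\top\Wbf\Lambdabf^2\Wbf^\top\ybf$. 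Part (i) ensures that $\Wbf$ is a $\rank(\Abf)\times\rank(\Abf)$ orthogonal matrix, so letting $\zbf = \Wbf^\top\ybf$ range over $\Rb^{\rank(\Abf)}$ reduces the workhorse inequality to $(1-\gamma)\|\zbf\|_2^2 \leq \|\Lambdabf\zbf\|_2^2 \leq (1+\gamma)\|\zbf\|_2^2$. Taking $\zbf$ to be canonical basis vectors shows that each diagonal entry of $\Lambdabf^2$ lies in $[1-\gamma,1+\gamma]$, hence each entry of $\Ibf - \Lambdabf^{-2}$ lies in $[-\gamma/(1-\gamma),\,\gamma/(1+\gamma)]$; the spectral norm of this diagonal matrix is then the maximum modulus, namely $\gamma/(1-\gamma)$.

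For part (iii), I would again use the factorization $\Psibf\Abf = (\Psibf\Ubf)(\Sigmabf\Vbf^\top)$. By part (i), $\Psibf\Ubf$ has full column rank and $\Sigmabf\Vbf^\top$ has full row rank, so the standard reverse-order identity for the Moore--Penrose pseudoinverse of a product applies, giving
\begin{equation*}
	(\Psibf\Abf)^\dagger \;=\; (\Sigmabf\Vbf^\top)^\dagger\,(\Psibf\Ubf)^\dagger \;=\; \Vbf\Sigmabf^{-1}(\Psibf\Ubf)^\dagger,
\end{equation*}
which is exactly the claimed identity (using that $\Vbf$ has orthonormal columns and $\Sigmabf$ is invertible to compute $(\Sigmabf\Vbf^\top)^\dagger = \Vbf\Sigmabf^{-1}$). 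The only nontrivial step in the whole argument is the initial translation from the hypothesis about $\Abf$ to the cleaner statement about $\Ubf$; once that equivalence is set up, the three parts follow from elementary SVD manipulations and the reverse-order pseudoinverse identity, so I do not anticipate a substantive obstacle.
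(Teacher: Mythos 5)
Your proposal is correct and follows essentially the same route as the paper's proof: reformulate the $\gamma$-subspace embedding condition as two-sided bounds on $\|\Psibf\Ubf\ybf\|_2^2$ over $\ybf\in\Rb^{\rank(\Abf)}$, deduce that the squared singular values of $\Psibf\Ubf$ lie in $[1-\gamma,1+\gamma]$, and derive (i)--(iii) from there. The only cosmetic differences are that you extract the singular value bounds directly via canonical basis vectors rather than citing a textbook theorem, and in (iii) you invoke the full-column-rank/full-row-rank reverse-order law for the pseudoinverse in one step where the paper peels off the orthonormal factors first and then inverts the square middle factor.
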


\begin{proof}
	The proof follows similar arguments as those used in the proof of Lemma~4.1 in \citet{drineas2006SamplingAlgorithms}.
	Since $\Psibf$ is a $\gamma$-subspace embedding for $\Abf$, we have that
	\begin{equation}
		(1-\gamma) \|\Ubf \Sigmabf \Vbf^\top \xbf\|_2^2 \leq \| \Psibf \Ubf \Sigmabf \Vbf^\top \xbf \|_2^2 \leq (1+\gamma) \| \Ubf \Sigmabf \Vbf^\top \xbf \|_2^2 \;\;\;\; \text{for all } \xbf \in \Rb^R.
	\end{equation}
	Let $r \defeq \rank(\Abf)$. 
	Since $\Sigmabf \Vbf^\top \in \Rb^{r \times R}$ is full rank, and using unitary invariance of the spectral norm, it follows that
	\begin{equation}
		(1-\gamma) \|\ybf\|_2^2 \leq \| \Psibf \Ubf \ybf \|_2^2 \leq (1+\gamma) \| \ybf \|_2^2 \;\;\;\; \text{for all } \ybf \in \Rb^r.
	\end{equation}
	Using Theorem~8.6.1 in \citet{golub2013MatrixComputations}, this in turn implies that
	\begin{equation} \label{eq:singular-value-bound-Pi-U}
		1-\gamma \leq \sigma_i^2(\Psibf \Ubf) \leq 1+\gamma \;\;\;\; \text{for all } i \in [r].
	\end{equation}
	Consequently, $\rank(\Psibf \Ubf) = r = \rank(\Abf)$.
	Moreover, since $\rank(\Psibf \Abf) = \rank(\Psibf \Ubf \Sigmabf \Vbf^\top)$ and $\Sigmabf \Vbf^\top$ is full rank, it follows that $\rank(\Psibf \Abf) = \rank(\Psibf \Ubf)$.
	This completes the proof of (i).
	
	Next, note that
	\begin{equation}
		\| \Ibf - \Lambdabf^{-2} \|_2 
		= \max_{i \in [r]} \Big| 1 - \frac{1}{\sigma_i^2(\Psibf \Ubf)} \Big| 
		= \max_{i \in [r]} \Big| \frac{\sigma_i^2(\Psibf \Ubf) - 1}{\sigma_i^2(\Psibf \Ubf)} \Big| \leq \frac{\gamma}{1-\gamma},
	\end{equation}
	where the inequality follows from the bound in \eqref{eq:singular-value-bound-Pi-U}.
	This completes the proof of (ii).
	
	We may write
	\begin{equation} \label{eq:SA-dagger}
		(\Psibf \Abf)^\dagger 
		= (\Qbf \Lambdabf \Wbf^{\top} \Sigmabf \Vbf^\top)^\dagger 
		= \Vbf (\Lambdabf \Wbf^{\top} \Sigmabf)^\dagger \Qbf^\top  
	\end{equation}
	where the second equality follows since $\Qbf$ and $\Vbf$ have orthonormal columns.
	Since $\rank(\Psibf \Ubf) = \rank(\Abf)$ due to (i), the matrix $\Lambdabf \Wbf^\top \in \Rb^{r \times r}$ is invertible, and therefore $\Lambdabf \Wbf^\top \Sigmabf \in \Rb^{r \times r}$ is invertible, and hence
	\begin{equation} \label{eq:invertible-matrix}
		(\Lambdabf \Wbf^\top \Sigmabf)^\dagger 
		= \Sigmabf^{-1} \Wbf \Lambdabf^{-1}.
	\end{equation}
	Consequently, combining \eqref{eq:SA-dagger} and \eqref{eq:invertible-matrix} we have
	\begin{equation}
		(\Psibf \Abf)^\dagger 
		= \Vbf \Sigmabf^{-1} \Wbf \Lambdabf^{-1} \Qbf^\top 
		= \Vbf \Sigmabf^{-1} (\Psibf \Ubf)^\dagger.
	\end{equation}
	This completes the proof of (iii).
\end{proof}

We are now ready to prove the statement in Theorem~\ref{thm:leverage-score-estimation}.
\begin{proof}[Proof of Theorem~\ref{thm:leverage-score-estimation}]
	Our proof is similar to the proof of Lemma~9 in \citet{drineas2012FastApproximation}.
	Let $\Abf = \Ubf \Sigmabf \Vbf^\top$ be a compact SVD, $r \defeq \rank(\Abf)$ and suppose $i \in [I]$.
	From Definition~\ref{def:leverage-score}, we have 
	\begin{equation} \label{eq:ell-i}
		\ell_i(\Abf) = \|\Ubf(i,:)\|_2^2 = \ebf_i^\top \Ubf \Ubf^\top \ebf_i.	
	\end{equation}
	Moreover, 
	\begin{equation} \label{eq:hat-ell-i}
		\tilde{\ell}_i(\Abf) 
		= \| \ebf_i^\top \Abf \Vbf_1 \Sigmabf_1^{-1} \Ubf_1^\top\|_2^2 
		= \| \ebf_i^\top \Abf (\Psibf \Abf)^\dagger \|_2^2 
		= \| \ebf_i^\top \Ubf (\Psibf \Ubf)^\dagger \|_2^2 
		= \ebf_i^\top \Ubf (\Psibf \Ubf)^\dagger (\Psibf \Ubf)^{\dagger\top} \Ubf^\top \ebf_i,	
	\end{equation}
	where the first equality follows from the definition of $\tilde{\ell}_i(\Abf)$ in \eqref{eq:u-tilde} and the unitary invariance of the spectral norm, and the third equality follows from Lemma~\ref{lemma:subspace-embedding-properties} (iii).
	From \eqref{eq:ell-i} and \eqref{eq:hat-ell-i}, we have
	\begin{equation} 
		\begin{aligned}	\label{eq:1-pf-lemma-dist-U-u-hat}
			| \ell_i(\Abf) - \tilde{\ell}_i(\Abf) | 
			&= | \ebf_i^\top \Ubf \big( \Ibf - (\Psibf \Ubf)^\dagger (\Psibf \Ubf)^{\dagger \top} \big) \Ubf^\top \ebf_i | \\
			&\leq \| \ebf_i^\top \Ubf \|_2 \cdot \| ( \Ibf - (\Psibf \Ubf)^\dagger (\Psibf \Ubf)^{\dagger \top} ) \Ubf^\top \ebf_i \|_2 \\
			&\leq \| \ebf_i^\top \Ubf \|_2 \cdot \| \Ibf - (\Psibf \Ubf)^\dagger (\Psibf \Ubf)^{\dagger \top} \|_2 \cdot \| \Ubf^\top \ebf_i \|_2 \\
			&= \| \Ibf - (\Psibf \Ubf)^\dagger (\Psibf \Ubf)^{\dagger \top} \|_2 \cdot \ell_i(\Abf),
		\end{aligned}
	\end{equation}
	where the first inequality follows from Cauchy--Schwarz inequality, and the second inequality follows from the definition of the matrix spectral norm.
	Let $\Psibf \Ubf = \Qbf \Lambdabf \Wbf^\top$ be a compact SVD.
	It follows that 
	\begin{equation} \label{eq:2-pf-lemma-dist-U-u-hat}
		\| \Ibf - (\Psibf \Ubf)^\dagger (\Psibf \Ubf)^{\dagger \top} \|_2 = \| \Ibf - \Wbf \Lambdabf^{-2} \Wbf^\top \|_2.
	\end{equation}
	From Lemma~\ref{lemma:subspace-embedding-properties} (i), it follows that $\Wbf$ is $r \times r$, hence $\Wbf \Wbf^\top = \Ibf$. 
	Consequently, and using unitary invariance of the spectral norm,
	\begin{equation} \label{eq:3-pf-lemma-dist-U-u-hat}
		\| \Ibf - \Wbf \Lambdabf^{-2} \Wbf^\top \|_2 = \|\Ibf - \Lambdabf^{-2}\|_2.
	\end{equation}
	Combining \eqref{eq:1-pf-lemma-dist-U-u-hat}, \eqref{eq:2-pf-lemma-dist-U-u-hat} and \eqref{eq:3-pf-lemma-dist-U-u-hat}, we get
	\begin{equation}
		\big| \ell_i(\Abf) - \tilde{\ell}_i(\Abf) \big| 
		\leq \|\Ibf - \Lambdabf^{-2}\|_2 \cdot \ell_i(\Abf) \leq \frac{\gamma}{1-\gamma} \ell_i(\Abf),
	\end{equation}
	where the last inequality follows from Lemma~\ref{lemma:subspace-embedding-properties} (ii).
	This completes the proof.
\end{proof}

\subsection{Proof of Theorem~\ref{thm:cp-main}} \label{sec:proof-cp-main}

We first state some results that we will need for this proof.
Lemma~\ref{lemma:kronecker-property-1} follows from Lemma~4.2.10 in \citet{horn1994TopicsMatrix}.
\begin{lemma} \label{lemma:kronecker-property-1}
	For matrices $\Mbf_1, \ldots, \Mbf_n$ and $\Nbf_1, \ldots, \Nbf_n$ of appropriate sizes,
	\begin{equation}
		(\Mbf_1 \otimes \cdots \otimes \Mbf_n) \cdot (\Nbf_1 \otimes \cdots \otimes \Nbf_n) = (\Mbf_1 \Nbf_1) \otimes \cdots \otimes (\Mbf_n \Nbf_n).
	\end{equation}
\end{lemma}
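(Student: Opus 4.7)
My plan is to prove the mixed product property by first establishing the two-factor case and then extending to $n$ factors by induction. The result is classical and not hard, so I will aim for a clean proof rather than anything clever.

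First I would verify the $n=2$ case by direct block computation. For $\Mbf_1 \in \Rb^{a \times b}$, $\Mbf_2 \in \Rb^{c \times d}$, $\Nbf_1 \in \Rb^{b \times e}$, $\Nbf_2 \in \Rb^{d \times f}$, view $\Mbf_1 \otimes \Mbf_2$ as an $a \times b$ block matrix whose $(i,j)$-block is $\Mbf_1(i,j) \Mbf_2$, and similarly for $\Nbf_1 \otimes \Nbf_2$. Block matrix multiplication then gives that the $(i,k)$-block of the product is
\begin{equation}
\sum_{j=1}^{b} \bigl(\Mbf_1(i,j) \Mbf_2\bigr) \bigl(\Nbf_1(j,k) \Nbf_2\bigr) = \Bigl(\sum_{j=1}^{b} \Mbf_1(i,j) \Nbf_1(j,k)\Bigr) (\Mbf_2 \Nbf_2) = (\Mbf_1 \Nbf_1)(i,k) \cdot (\Mbf_2 \Nbf_2),
\end{equation}
which is exactly the $(i,k)$-block of $(\Mbf_1 \Nbf_1) \otimes (\Mbf_2 \Nbf_2)$.

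Next I would induct on $n$. The only thing to be careful about is that the statement implicitly uses associativity of $\otimes$, so I would note (or quickly check via indices) that $\Mbf_1 \otimes \Mbf_2 \otimes \cdots \otimes \Mbf_n = \Mbf_1 \otimes (\Mbf_2 \otimes \cdots \otimes \Mbf_n)$. Assuming the identity for $n-1$ factors, apply the $n=2$ case with the groupings $\Mbf_1$ versus $\Mbf_2 \otimes \cdots \otimes \Mbf_n$ and $\Nbf_1$ versus $\Nbf_2 \otimes \cdots \otimes \Nbf_n$ to obtain
\begin{equation}
(\Mbf_1 \otimes \cdots \otimes \Mbf_n)(\Nbf_1 \otimes \cdots \otimes \Nbf_n) = (\Mbf_1 \Nbf_1) \otimes \bigl((\Mbf_2 \otimes \cdots \otimes \Mbf_n)(\Nbf_2 \otimes \cdots \otimes \Nbf_n)\bigr),
\end{equation}
and then apply the inductive hypothesis to the parenthesized product on the right to obtain $(\Mbf_1 \Nbf_1) \otimes (\Mbf_2 \Nbf_2) \otimes \cdots \otimes (\Mbf_n \Nbf_n)$.

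There is no real obstacle here; the only thing worth a moment of care is bookkeeping of dimensions to ensure each product $\Mbf_i \Nbf_i$ is well defined, which is guaranteed by the ``appropriate sizes'' hypothesis. Since the excerpt points out that the result is exactly Lemma~4.2.10 of \citet{horn1994TopicsMatrix}, one could alternatively cite that reference directly rather than reprove the lemma.
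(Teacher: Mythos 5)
Your proof is correct: the two-factor case via block multiplication and the induction on $n$ (with the noted appeal to associativity of $\otimes$) are both sound. The paper itself gives no proof of this lemma and simply cites Lemma~4.2.10 of Horn and Johnson, which matches the alternative you mention at the end of your proposal.
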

Theorem~\ref{thm:recursive-sketch} follows directly from Theorem~1 in \citet{ahle2020ObliviousSketching} and its proof.%
\footnote{
	Since we are not considering regularized least squares problems, the statistical dimension $s_\lambda$ in \citet{ahle2020ObliviousSketching} just becomes equal to the number of columns of $\Abf$, which is $R$ in our case.
	The statement of Theorem~1 in \citet{ahle2020ObliviousSketching} uses $\delta=1/10$, but the statement for general $\delta$ is easy to infer from their proof of the theorem.
}
\begin{theorem} \label{thm:recursive-sketch}
	Let $\Abf \in \Rb^{I^N \times R}$. 
	Let $\Psibf \in \Rb^{J \times I^N}$ be the $(J, (I)_{j=1}^N)$-recursive sketch described in Section~\ref{sec:gaussian-and-recursive-sketches}.
	If $J \gtrsim N R^2 /(\gamma^2 \delta)$, then $\Psibf$ is a $\gamma$-subspace embedding for $\Abf$ with probability at least $1-\delta$.
\end{theorem}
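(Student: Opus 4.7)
The plan is to reduce the subspace-embedding property to an operator-norm bound on a Gram matrix, and then to control that bound through a second-moment analysis that propagates over the $O(\log N)$ levels of the recursive tree of CountSketches and TensorSketches.

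First, I would replace $\Abf$ by an orthonormal basis $\Ubf \in \Rb^{I^N \times r}$ of its column space, where $r \defeq \rank(\Abf) \leq R$. Being a $\gamma$-subspace embedding for $\Abf$ is then equivalent to $\|\Ubf^\top \Psibf^\top \Psibf \Ubf - \Ibf_r\|_2 \leq \gamma$. Using $\|\cdot\|_2 \leq \|\cdot\|_F$ and Markov's inequality, it suffices to show
\begin{equation*}
\mathbb{E}\bigl[\|\Ubf^\top \Psibf^\top \Psibf \Ubf - \Ibf_r\|_F^2\bigr] \leq \gamma^2 \delta.
\end{equation*}
Expanding entrywise gives $\|\Ubf^\top \Psibf^\top \Psibf \Ubf - \Ibf_r\|_F^2 = \sum_{s,t=1}^r \bigl(\langle \Psibf \ubf_s, \Psibf \ubf_t\rangle - \langle \ubf_s, \ubf_t\rangle\bigr)^2$, so the whole proof reduces to the per-pair second-moment bound
\begin{equation*}
\mathbb{E}\bigl[\bigl(\langle \Psibf \ubf, \Psibf \vbf\rangle - \langle \ubf, \vbf\rangle\bigr)^2\bigr] \lesssim \tfrac{N}{J}\,\|\ubf\|_2^2\,\|\vbf\|_2^2, \qquad \ubf,\vbf \in \Rb^{I^N},
\end{equation*}
since summing this over $r^2 \leq R^2$ orthonormal pairs yields the required $O(NR^2/J)$ expected Frobenius error and hence the sketching dimension $J \gtrsim NR^2/(\gamma^2 \delta)$.

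The per-pair bound is established from two well-known building-block identities. For an independent CountSketch $\Cbf \in \Rb^{J \times I}$ with $3$- and $4$-wise independent hashes,
\begin{equation*}
\mathbb{E}\langle \Cbf \xbf, \Cbf \ybf\rangle = \langle \xbf, \ybf\rangle, \qquad \mathrm{Var}\bigl(\langle \Cbf \xbf, \Cbf \ybf\rangle\bigr) \leq \tfrac{2}{J}\,\|\xbf\|_2^2\,\|\ybf\|_2^2,
\end{equation*}
and the analogous unbiasedness and variance bound hold for a degree-two TensorSketch $\Tbf$ applied to $\xbf \otimes \ybf$. The leaf sketch $\Cbf = \bigotimes_{n=1}^{2^q} \Cbf_n$ is, by Lemma~\ref{lemma:kronecker-property-1}, multiplicative on pure tensors: $\langle \Cbf(\bigotimes_n \xbf_n), \Cbf(\bigotimes_n \ybf_n)\rangle = \prod_n \langle \Cbf_n \xbf_n, \Cbf_n \ybf_n\rangle$, and each internal node $\Tbf_j^{(m)}$ in the recursion has the same conditional-multiplicativity property on its two length-$J$ inputs.

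The heart of the argument, and the main obstacle, is propagating the second-moment bound through the tree when $\ubf$ and $\vbf$ are not pure tensor products, so that the sketched inner product at each internal node is a sum over cross-terms indexed by the standard basis of $\Rb^{I^N}$. I would carry out this propagation inductively: conditioning on everything below level $m$, show that if the two length-$J$ inputs $\Ybf_{2j-1}^{(m-1)}, \Ybf_{2j}^{(m-1)}$ already satisfy an unbiased second-moment bound with constant $\alpha_{m-1}$, then applying $\Tbf_j^{(m)}$ inflates the constant by at most an additive $O(1/J)$, because the TensorSketch variance on vectors $\ybf_{2j-1}^{(m-1)} \otimes \ybf_{2j}^{(m-1)}$ factors into its mean plus an $O(1/J)$ relative-error term. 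Starting from $\alpha_0 = O(N/J)$ at the leaves (coming from $N$ independent CountSketches on the $N$ tensor factors of each basis element) and iterating across the $q = \lceil \log_2 N \rceil$ levels, the law of total variance gives $\alpha_q = O(N/J)$; summing the $R^2$ pairwise variances and applying Markov as above closes the proof at the claimed $J \gtrsim NR^2/(\gamma^2 \delta)$.
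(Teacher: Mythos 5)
The paper does not actually prove this statement: it is imported wholesale from Theorem~1 of \citet{ahle2020ObliviousSketching} (see the footnote attached to Theorem~\ref{thm:recursive-sketch} in Section~\ref{sec:proof-cp-main}). Your proposal is therefore, in effect, a reconstruction of the argument behind the cited result, and its outer architecture is the correct one: pass to an orthonormal basis $\Ubf$ of $\col(\Abf)$, reduce the subspace-embedding property to $\|\Ubf^\top\Psibf^\top\Psibf\Ubf-\Ibf\|_\F\le\gamma$, bound the expected squared Frobenius norm by $r^2\le R^2$ times a per-pair variance of order $N/J$, and finish with Markov. This is exactly where the $1/\delta$ dependence in the statement comes from, and the building-block unbiasedness and $O(1/J)$ variance facts you quote for CountSketch and degree-two TensorSketch are standard.

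The gap is that the one step carrying essentially all of the content --- that composing the sketches over the tree inflates the variance parameter only \emph{additively} for \emph{arbitrary} inputs $\ubf\in\Rb^{I^N}$, not just pure tensors --- is asserted rather than proved. Note that the vectors to which the per-pair bound must be applied are the columns of $\Ubf$, which are not pure Kronecker products even when the columns of $\Abf$ are, so the pure-tensor case genuinely does not suffice. Your induction hypothesis is phrased in terms of ``the two length-$J$ inputs $\Ybf_{2j-1}^{(m-1)},\Ybf_{2j}^{(m-1)}$,'' which only makes literal sense when $\ubf=\bigotimes_n\ubf_n$; for general $\ubf$ the object entering level $m$ is a single vector in $\Rb^{J^{2^{q-m+1}}}$ with no tensor-product structure, so the hypothesis must be restated as a second-moment property of the composed linear map $\Tbf^{(m-1)}\cdots\Tbf^{(1)}\Cbf$ on all of $\Rb^{I^N}$, and the inductive step then requires two separate composition lemmas: one for the matrix product $\Mbf_1\Mbf_2$ of independent sketches and one for the Kronecker product $\Mbf_1\otimes\Mbf_2$ acting on non-product inputs. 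The Kronecker case is the delicate one: expanding over ``cross-terms indexed by the standard basis of $\Rb^{I^N}$'' as you suggest produces $I^{2N}$ fourth-moment cross terms, and the cancellations that keep the per-level inflation at $O(1/J)$ instead of blowing up exponentially in $N$ are precisely the content of the composition lemmas in \citet{ahle2020ObliviousSketching}. Until that lemma is stated and proved (or explicitly invoked), the proposal does not establish the theorem at the claimed $J \gtrsim NR^2/(\gamma^2\delta)$, which is the entire point of using the recursive sketch over the coarser Khatri--Rao sampling schemes.
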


It is easy to generalize Theorem~\ref{thm:recursive-sketch} to the setting when $\Psibf$ is a $(J, (I_j)_{j=1}^N)$-recursive sketch where the $I_j$ are not necessarily all equal.
\begin{corr} \label{corr:recursive-sketch}
	Let $\Abf \in \Rb^{\Pi_{j=1}^N I_j \times R}$.
	Let $\Psibf \in \Rb^{J \times \Pi_{j=1}^N I_j}$ be the $(J, (I_j)_{j=1}^N)$-recursive sketch described in Section~\ref{sec:gaussian-and-recursive-sketches}.
	If $J \gtrsim N R^2 / (\gamma^2 \delta)$, then $\Psibf$ is a $\gamma$-subspace embedding for $\Abf$ with probability at least $1-\delta$.
\end{corr}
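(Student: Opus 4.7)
The plan is to reduce the non-equal-dimension setting to the equal-dimension case of Theorem~\ref{thm:recursive-sketch} by a zero-padding isometry. It suffices to handle the case $N = 2^q$: for general $N$, the definition $\Psibf_{J,(I_n)_{n=1}^N}(\xbf) = \Psibf_{J,(\tilde{I}_n)_{n=1}^{2^q}}(\xbf \otimes \ebf_1^{\otimes(2^q - N)})$ already factors through the isometry $\xbf \mapsto \xbf \otimes \ebf_1^{\otimes(2^q - N)}$ into a space with $2^q \le 2N$ tensor factors, so a subspace embedding for the lifted matrix is automatically a subspace embedding for $\Abf$ at the cost of only a constant factor in the sample complexity.

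Set $I \defeq I_{\max}$. I will couple $\Psibf \defeq \Psibf_{J,(I_n)_{n=1}^N}$ with an equal-dimension recursive sketch $\tilde{\Psibf} \defeq \Psibf_{J,(I)_{n=1}^N}$ as follows. Sample $3$-wise independent hashes $\tilde{h}_n : [I] \to [J]$ and $4$-wise independent signs $\tilde{s}_n : [I] \to \{\pm 1\}$ for the CountSketches of $\tilde{\Psibf}$, and share the TensorSketches $\Tbf^{(m)}_j$ between the two sketches. Define the CountSketches of $\Psibf$ via the restrictions $h_n \defeq \tilde{h}_n|_{[I_n]}$ and $s_n \defeq \tilde{s}_n|_{[I_n]}$; these remain $3$- and $4$-wise independent since any subfamily of a $k$-wise independent family is itself $k$-wise independent, so $\Psibf$ retains its correct marginal distribution. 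Now define the zero-padding isometry $P : \Rb^{\Pi_j I_j} \to \Rb^{I^N}$ that reshapes a vector as an $I_1 \times \cdots \times I_N$ tensor and extends it by zeros to an $I \times \cdots \times I$ tensor, and set $\tilde{\Abf} \defeq P\Abf$. Because $P$ preserves $\ell_2$ norm, $\|\tilde{\Abf}\xbf\|_2 = \|\Abf \xbf\|_2$ for every $\xbf \in \Rb^R$. Moreover, since each CountSketch of $\Psibf$ is exactly the first $I_n$ columns of the corresponding CountSketch of $\tilde{\Psibf}$, the padded coordinates contribute zero to the Kronecker application, yielding the deterministic (on the shared probability space) identity $\Psibf \Abf = \tilde{\Psibf}\tilde{\Abf}$.

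The finish is then immediate: Theorem~\ref{thm:recursive-sketch} applied to $\tilde{\Psibf}$ and $\tilde{\Abf}$ guarantees that, when $J \gtrsim N R^2/(\gamma^2 \delta)$, with probability at least $1-\delta$ we have $\bigl|\|\tilde{\Psibf}\tilde{\Abf}\xbf\|_2^2 - \|\tilde{\Abf}\xbf\|_2^2\bigr| \leq \gamma \|\tilde{\Abf}\xbf\|_2^2$ for all $\xbf \in \Rb^R$; substituting $\Psibf\Abf\xbf$ for $\tilde{\Psibf}\tilde{\Abf}\xbf$ and $\Abf\xbf$ for $\tilde{\Abf}\xbf$ via the coupling and the isometry yields the claimed subspace embedding property for $\Psibf$. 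The only conceptual step is the coupling via restricted hash and sign functions; once that is in place, the commutation of the isometry with the random linear map is essentially a bookkeeping check, so I do not anticipate a real obstacle.
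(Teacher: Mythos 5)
Your proof is correct and follows essentially the same route as the paper: zero-pad $\Abf$ to an $I_{\max}^N \times R$ matrix, couple $\Psibf$ with an equal-dimension recursive sketch whose CountSketches agree with those of $\Psibf$ on the first $I_n$ columns, observe that $\Psibf\Abf$ equals the padded sketch applied to the padded matrix while norms are preserved, and invoke Theorem~\ref{thm:recursive-sketch}. The only cosmetic difference is the direction of the coupling --- you sample the larger hash/sign families and restrict, whereas the paper extends each $\Cbf_j$ by an independent CountSketch block --- and these describe the same joint distribution.
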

\begin{proof}
	Let $q \defeq \lceil \log_2(N) \rceil$, $I_{\max} \defeq \max_{j \in [N]} I_j$, and $\tilde{I}_j \defeq I_j$ for $j \leq N$ and $\tilde{I}_j \defeq I_{\max}$ for $j > N$.
	Let $\onebf_{1 \times R}$ denote a length-$R$ row vector of all ones. 
	From the definition of the recursive sketch in Section~\ref{sec:gaussian-and-recursive-sketches} and the factorization in \eqref{eq:recursive-sketch-factorization}, we have
	\begin{equation} \label{eq:Psibf-A}
	\begin{aligned}
		\Psibf \Abf 
		&= \Psibf_{J, (\tilde{I}_j)_{j=1}^{2^q}} \big( \Abf \odot (\ebf_1^{\otimes(2^q - N)} \onebf_{1 \times R}) \big) \\
		&= \Tbf^{(q)} \Tbf^{(q-1)} \cdots \Tbf^{(1)} \Big( \bigotimes_{j=1}^{2^q} \Cbf_j \Big) \big( \Abf \odot (\ebf_1^{\otimes(2^q - N)} \onebf_{1 \times R}) \big) \\
		&= \Tbf^{(q)} \Tbf^{(q-1)} \cdots \Tbf^{(1)} \bigg( \Big( \Big(\bigotimes_{j=1}^N \Cbf_{j}\Big) \Abf \Big) \odot \Big( \Big( \bigotimes_{j=N+1}^{2^q} \Cbf_{j} \ebf_1 \Big) \onebf_{1 \times R} \Big) \bigg),
	\end{aligned}
	\end{equation}
	where the last equality follows from Lemma~\ref{lemma:kronecker-property-1}.
	Define two index sets 
	\begin{equation} 
		\Ic \defeq [I_1] \times \cdots \times [I_N] \;\;\;\; \text{and} \;\;\;\; \Ic^c \defeq [I_{\max}]^N \setminus \Ic.
	\end{equation}
	Let $\hat{\Abf} \in \Rb^{I_{\max}^N \times R}$ be an augmented version of $\Abf$ defined as
	\begin{equation} \label{eq:augmented-A}
		\hat{\Abf}(\overline{i_N \cdots i_1}, :) = 
		\begin{cases}
			\Abf(\overline{i_N \cdots i_1}, :) & \text{if } (i_1, \ldots, i_N) \in \Ic, \\
			0 & \text{if } (i_1, \ldots, i_N) \in \Ic^c.
		\end{cases}
	\end{equation}
	Let $\hat{\Psibf} \in \Rb^{J \times I_{\max}^N}$ be the $(J, (I_{\max})_{j=1}^{N})$-recursive sketch which uses independent CountSketches defined as
	\begin{equation} \label{eq:C-splitting}
		\hat{\Cbf}_j \in \Rb^{J \times I_{\max}} 
		\defeq
		\begin{bmatrix}
			\Cbf_j & \tilde{\Cbf}_j
		\end{bmatrix}, \;\;\;\; j \in [2^q],
	\end{equation}
	where the matrices $\Cbf_j$ are the same as in \eqref{eq:Psibf-A}, and each $\tilde{\Cbf}_j$ is an independent CountSketch of size $J \times (I_{\max} - I_j)$. 
	Again using the factorization in \eqref{eq:recursive-sketch-factorization}, we have
	\begin{equation} \label{eq:hat-Psibf-hat-A}
	\begin{aligned}
		\hat{\Psibf} \hat{\Abf} 
		&= \Psibf_{J, (I_{\max})_{j=1}^{2^q}} (\hat{\Abf} \odot \big(\ebf_1^{\otimes (2^q - N)} \onebf_{1 \times R})\big) \\
		&= \Tbf^{(q)} \Tbf^{(q-1)} \cdots \Tbf^{(1)} \Big(\bigotimes_{j=1}^{2^q} \hat{\Cbf}_j \Big) \big(\hat{\Abf} \odot (\ebf_1^{\otimes (2^q - N)} \onebf_{1 \times R})\big) \\
		&= \Tbf^{(q)} \Tbf^{(q-1)} \cdots \Tbf^{(1)} \bigg( \Big(\Big(\bigotimes_{j=1}^N \hat{\Cbf}_{j}\Big) \hat{\Abf}\Big) \odot \Big( \Big(\bigotimes_{j=N+1}^{2^q} \hat{\Cbf}_{j} \ebf_1 \Big) \onebf_{1 \times R} \Big) \bigg) ,
	\end{aligned}
	\end{equation}
	where the last equality follows from Lemma~\ref{lemma:kronecker-property-1}.
	From the definition of matrix multiplication, we have
	\begin{equation} \label{eq:Kronecker-C-applied-to-aug-A}
		\Big(\bigotimes_{j=1}^N \hat{\Cbf}_{j}\Big) \hat{\Abf} 
		= \sum_{(i_1, \ldots, i_N) \in \Ic \cup \Ic^c} \Big(\bigotimes_{j=1}^N \hat{\Cbf}_{j}\Big)(:, \overline{i_N \cdots i_1}) \hat{\Abf}(\overline{i_N \cdots i_1}, :).
	\end{equation}
	Due to \eqref{eq:C-splitting}, it follows that $\hat{\Cbf}_j(:, i_j) = \Cbf_j(:, i_j)$ when $i_j \in [I_j]$, and consequently 
	\begin{equation} \label{eq:C-splitting-effect}
		\Big(\bigotimes_{j=1}^N \hat{\Cbf}_{j}\Big)(:, \overline{i_N \cdots i_1}) 
		= \bigotimes_{j=1}^{N}\hat{\Cbf}_j(:, i_j) 
		= \bigotimes_{j=1}^{N}\Cbf_j(:, i_j)
		= \Big(\bigotimes_{j=1}^N \Cbf_j \Big)(:, \overline{i_N \cdots i_1}) \;\;\;\; \text{for all } (i_1,\ldots,i_N) \in \Ic.
	\end{equation}
	Using \eqref{eq:augmented-A} and \eqref{eq:C-splitting-effect}, we can simplify \eqref{eq:Kronecker-C-applied-to-aug-A} to 
	\begin{equation} \label{eq:equality-in-dist-1}
		\Big(\bigotimes_{j=1}^N \hat{\Cbf}_{j}\Big) \hat{\Abf} 
		= \sum_{(i_1, \ldots, i_N) \in \Ic} \Big(\bigotimes_{j=1}^N \Cbf_j\Big)(:, \overline{i_N \cdots i_1}) \Abf(\overline{i_N \cdots i_1}, :)
		= \Big(\bigotimes_{j=1}^N \Cbf_{j} \Big) \Abf.
	\end{equation}
	Similarly, since the first column of each $\hat{\Cbf}_j$ and $\Cbf_j$ are the same,
	\begin{equation} \label{eq:equality-in-dist-2}
		\Big( \bigotimes_{j=N+1}^{2^q} \hat{\Cbf}_{j} \ebf_1 \Big) \onebf_{1 \times R} = \Big( \bigotimes_{j=N+1}^{2^q} \Cbf_{j} \ebf_1 \Big) \onebf_{1 \times R}.
	\end{equation}
	Equations \eqref{eq:Psibf-A}, \eqref{eq:hat-Psibf-hat-A}, \eqref{eq:equality-in-dist-1} and \eqref{eq:equality-in-dist-2} together now imply that
	\begin{equation} \label{eq:equality-in-dist-3}
		\Psibf \Abf = \hat{\Psibf} \hat{\Abf}.
	\end{equation}
	Moreover, it follows immediately from \eqref{eq:augmented-A} that
	\begin{equation} \label{eq:equality-in-norm}
		\| \Abf \xbf \|_2 = \| \hat{\Abf} \xbf \|_2 \;\;\;\; \text{for all } \xbf \in \Rb^R.
	\end{equation}
	Theorem~\ref{thm:recursive-sketch} implies that 
	\begin{equation} 
		\Pb\big( \big| \|\hat{\Psibf} \hat{\Abf} \xbf\|_2^2 - \| \hat{\Abf} \xbf \|_2^2 \big| \leq \gamma \|\hat{\Abf} \xbf\|_2^2 \;\; \text{for all } \xbf \in \Rb^R \big) \geq 1-\delta.
	\end{equation}
	Due to \eqref{eq:equality-in-dist-3} and \eqref{eq:equality-in-norm}, this implies that
	\begin{equation}
		\Pb\big( \big| \|\Psibf \Abf \xbf\|_2^2 - \| \Abf \xbf \|_2^2 \big| \leq \gamma \|\Abf \xbf\|_2^2 \;\; \text{for all } \xbf \in \Rb^R \big) \geq 1-\delta,
	\end{equation}
	which is what we wanted to show.
\end{proof}

Theorem~\ref{thm:least-squares-guarantees} is a well-known result.
Since slightly different variations of it have appeared in the literature \citep{drineas2006SamplingAlgorithms, drineas2008RelativeerrorCUR, drineas2011FasterLeast, larsen2020PracticalLeverageBased} we provide a proof sketch just to give the reader some idea of how to derive the version we use.
\begin{theorem} \label{thm:least-squares-guarantees}
	Let $\Abf \in \Rb^{I \times R}$ be a matrix, and suppose $\Sbf \sim \Dc(J, \qbf)$ is a leverage score sampling matrix for $(\Abf, \beta)$ where $\beta \in (0,1]$, and that $\varepsilon, \delta \in (0,1)$. 
	Moreover, define $\OPT \defeq \min_{\Xbf} \|\Abf \Xbf - \Ybf\|_\F$ and $\tilde{\Xbf} \defeq \argmin_{\Xbf} \| \Sbf \Abf \Xbf - \Sbf \Ybf\|_\F$.
	If 
	\begin{equation} \label{eq:least-squares-guarantees-sketch-rate}
		J > \frac{4R}{\beta} \max\Big( \frac{4}{3(\sqrt{2}-1)^2}\ln\Big( \frac{4R}{\delta} \Big), \frac{1}{\varepsilon \delta} \Big),
	\end{equation}
	then the following holds with probability at least $1-\delta$:
	\begin{equation} \label{eq:relative-error-guarantee}
		\| \Abf \tilde{\Xbf} - \Ybf \|_\F \leq (1+\varepsilon) \OPT.
	\end{equation}
\end{theorem}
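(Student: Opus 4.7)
The proof will follow the by-now standard structural decomposition for sketched least squares, combined with two probabilistic ingredients: (i) a subspace embedding bound that controls $\sigma_{\min}(\Sbf \Ubf)$ where $\Ubf$ contains the left singular vectors of $\Abf$, and (ii) an approximate matrix multiplication bound for $\|(\Sbf \Ubf)^\top \Sbf \Ebf\|_\F$ where $\Ebf \defeq \Abf \Xbf^* - \Ybf$ is the residual at an optimal solution $\Xbf^*$. The two conditions defining the max in \eqref{eq:least-squares-guarantees-sketch-rate} correspond exactly to these two ingredients, and the max structure matches a union bound in which each event is guaranteed with probability at least $1-\delta/2$. Throughout I will write $\Abf = \Ubf \Sigmabf \Vbf^\top$ for a compact SVD and assume without loss of generality that $\Abf$ has full column rank $R$.

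\textbf{Ingredient (i): subspace embedding.} The matrix $(\Sbf \Ubf)^\top(\Sbf \Ubf) = \frac{1}{J} \sum_{j=1}^J \Ubf(f(j),:)^\top \Ubf(f(j),:) / \qbf(f(j))$ is an average of $J$ i.i.d.\ rank-one PSD matrices with mean $\Ibf$, and each summand has spectral norm bounded by $\|\Ubf(f(j),:)\|_2^2/\qbf(f(j)) \leq R/\beta$ since $\qbf(i) \geq \beta \ell_i(\Abf)/R = \beta \|\Ubf(i,:)\|_2^2/R$. A standard matrix Chernoff bound then yields $\sigma_i^2(\Sbf \Ubf) \in [1/\sqrt{2}, 2-1/\sqrt{2}]$ for all $i \in [R]$ with probability at least $1 - \delta/2$, provided $J \geq \frac{4R}{3 \beta (\sqrt{2}-1)^2} \ln(4R/\delta)$. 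In other words, $\Sbf$ is a $(1-1/\sqrt{2})$-subspace embedding for $\Abf$, so that $\|((\Sbf \Ubf)^\top (\Sbf \Ubf))^{-1}\|_2 \leq \sqrt{2}$.

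\textbf{Ingredient (ii): approximate matrix multiplication.} Writing $(\Sbf \Ubf)^\top \Sbf \Ebf = \frac{1}{J} \sum_j \Ubf(f(j),:)^\top \Ebf(f(j),:)/\qbf(f(j))$ and using $\Ubf^\top \Ebf = \zerobf$ (normal equations), a direct variance computation gives
\begin{equation}
\Eb \|(\Sbf \Ubf)^\top \Sbf \Ebf\|_\F^2 = \frac{1}{J} \sum_i \frac{\|\Ubf(i,:)\|_2^2 \, \|\Ebf(i,:)\|_2^2}{\qbf(i)} \leq \frac{R}{J \beta} \|\Ebf\|_\F^2 = \frac{R}{J \beta} \OPT^2,
\end{equation}
using the leverage-score lower bound on $\qbf(i)$ once more. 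Markov's inequality then yields $\|(\Sbf \Ubf)^\top \Sbf \Ebf\|_\F^2 \leq \frac{2R}{J \beta \delta} \OPT^2$ with probability at least $1 - \delta/2$, and this is at most $(\varepsilon/2)\OPT^2$ as soon as $J \geq 4R/(\beta \varepsilon \delta)$, matching the second term in the max.

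\textbf{Combining and main obstacle.} By a union bound both events hold simultaneously with probability $\geq 1-\delta$. Decomposing $\Abf \tilde{\Xbf} - \Ybf = \Abf(\tilde{\Xbf} - \Xbf^*) + \Ebf$ and invoking orthogonality $\Ubf^\top \Ebf = \zerobf$ gives the Pythagorean identity $\|\Abf \tilde{\Xbf} - \Ybf\|_\F^2 = \|\Abf(\tilde{\Xbf} - \Xbf^*)\|_\F^2 + \OPT^2$. Using $\tilde{\Xbf} - \Xbf^* = -(\Sbf \Abf)^\dagger \Sbf \Ebf$ together with Lemma~\ref{lemma:subspace-embedding-properties}(iii) to write $(\Sbf \Abf)^\dagger = \Vbf \Sigmabf^{-1} (\Sbf \Ubf)^\dagger$, and noting that $\Ubf$ has orthonormal columns, we get $\|\Abf(\tilde{\Xbf} - \Xbf^*)\|_\F = \|(\Sbf \Ubf)^\dagger \Sbf \Ebf\|_\F$. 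Expanding $(\Sbf \Ubf)^\dagger = ((\Sbf \Ubf)^\top \Sbf \Ubf)^{-1} (\Sbf \Ubf)^\top$ and applying the subspace embedding bound from (i) yields $\|\Abf(\tilde{\Xbf} - \Xbf^*)\|_\F^2 \leq 2 \|(\Sbf \Ubf)^\top \Sbf \Ebf\|_\F^2 \leq \varepsilon \OPT^2$, and therefore $\|\Abf \tilde{\Xbf} - \Ybf\|_\F \leq \sqrt{1+\varepsilon}\, \OPT \leq (1+\varepsilon) \OPT$. The main care-requiring step is the matrix Chernoff application: one must verify the constant $4/(3(\sqrt{2}-1)^2)$ arising in \eqref{eq:least-squares-guarantees-sketch-rate} is consistent with a Chernoff bound calibrated to the deviation $\gamma = 1 - 1/\sqrt{2}$ and the scaling factor $1/(1-\gamma) = \sqrt{2}$ that feeds back into the structural bound; everything else is bookkeeping.
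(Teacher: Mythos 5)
Your proposal is correct and follows essentially the same route as the paper: the same reduction to the two structural conditions $\sigma_{\min}^2(\Sbf\Ubf)\geq 1/\sqrt{2}$ and $\|\Ubf^\top\Sbf^\top\Sbf\Ebf\|_\F^2\leq(\varepsilon/2)\OPT^2$, with the first handled by a matrix Chernoff bound at failure probability $\delta/2$ and the second by the exact variance computation plus Markov, exactly matching the two terms in the max. The only difference is that you derive the structural lemma (the paper's citation to Lemma~1 of Drineas et al.\ 2011 / Lemma~S1 of Malik 2021) inline via $\tilde{\Xbf}-\Xbf^*=-(\Sbf\Abf)^\dagger\Sbf\Ebf$ and the Pythagorean identity, which makes the argument self-contained but is not a different approach.
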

\begin{proof}[Proof sketch]
	Let $\Ubf \in \Rb^{I \times \rank(\Abf)}$ contain the left singular vectors of $\Abf$, and define $\Ybf^\perp \defeq (\Ibf - \Ubf\Ubf^\top) \Ybf$.
	According to a matrix version\footnote{See Lemma~S1 in \citet{malik2021SamplingBasedMethod}.} of Lemma~1 by \citet{drineas2011FasterLeast}, the statement in \eqref{eq:relative-error-guarantee} holds if both
	\begin{equation} \label{eq:cond-1-sketch}
		\sigma_{\min}^2(\Sbf \Ubf) \geq \frac{1}{\sqrt{2}}
	\end{equation}
	and
	\begin{equation} \label{eq:cond-2-sketch}
		\| \Ubf^\top \Sbf^\top \Sbf \Ybf^\perp \|_\F^2 \leq \frac{\varepsilon}{2} \OPT^2.
	\end{equation}
	To complete the proof, it is therefore sufficient to show that $\Sbf$ satisfies both \eqref{eq:cond-1-sketch} and \eqref{eq:cond-2-sketch} with probability at least $1-\delta$.
	Using Lemma~S2 in \citet{malik2021SamplingBasedMethod}, which is the same as Theorem~2.11 in \citet{woodruff2014SketchingTool} but with a slightly smaller constant, one can show that the condition \eqref{eq:cond-1-sketch} is satisfied with probability at least $1-\delta/2$ if 
	\begin{equation} \label{eq:J-partial-1}
		J > \frac{16}{3 (\sqrt{2}-1)^2} \frac{R}{\beta} \ln\Big(\frac{4R}{\delta}\Big).
	\end{equation}
	Next, using Lemma~8 in \citet{drineas2006FastMonte}, it follows that
	\begin{equation}
		\Eb\| \Ubf^\top \Sbf^\top \Sbf \Ybf^\perp \|_\F^2 \leq \frac{1}{J \beta} R \cdot \OPT^2.
	\end{equation}
	Markov's inequality together with the assumption
	\begin{equation} \label{eq:J-partial-2}
		J > \frac{4R}{\beta \varepsilon \delta}
	\end{equation}
	then implies that
	\begin{equation}
		\Pb\Big(\|\Ubf^\top \Sbf^\top \Sbf \Ybf^\perp\|_\F^2 > \frac{\varepsilon}{2} \OPT^2\Big) \leq \frac{2R}{J\varepsilon\beta} < \frac{\delta}{2}.
	\end{equation}
	If \eqref{eq:least-squares-guarantees-sketch-rate} is satisfied, then both \eqref{eq:J-partial-1} and \eqref{eq:J-partial-2} are satisfied, and consequently \eqref{eq:cond-1-sketch} and \eqref{eq:cond-2-sketch} are both true with probability at least $1-\delta$.
\end{proof}

We are now ready to prove the statement in Theorem~\ref{thm:cp-main}.

\begin{proof}[Proof of Theorem~\ref{thm:cp-main}]
Let $E_1$ denote the event that $\Psibf$ is a $1/3$-subspace embedding for $\Abf^{\neq n}$.
Following the notation used in Theorem~\ref{thm:leverage-score-estimation}, let $\Psibf \Abf^{\neq n} = \Ubf_1 \Sigmabf_1 \Vbf_1^{\top}$ be a compact SVD.
Let $E_2$ denote the event that \eqref{eq:cp-main-relative-error} is true.

According to Corollary~\ref{corr:recursive-sketch}, we can guarantee that $\Pb(E_1) \geq 1-\delta/2$ if we choose $J_1$ as in \eqref{eq:cp-J1}.
With $\gamma=1/3$ in Theorem~\ref{thm:leverage-score-estimation}, the estimates $\tilde{\ell}_i(\Abf^{\neq n})$ satisfy 
\begin{equation} \label{eq:bound-on-elli}
	\frac{1}{2} \ell_i(\Abf^{\neq n}) \leq \tilde{\ell}_i(\Abf^{\neq n}) \leq \frac{3}{2} \ell_i(\Abf^{\neq n}).
\end{equation}
Consequently, 
\begin{equation} \label{eq:elli-sum}
	\sum_{i=1}^{\Pi_{j \neq n} I_j} \tilde{\ell}_i(\Abf^{\neq n}) \leq \frac{3}{2} \sum_{i=1}^{\Pi_{j \neq n} I_j} \ell_i(\Abf^{\neq n}) = \frac{3}{2} \rank(\Abf^{\neq n}).
\end{equation}
Therefore, since $\qbf(i) \propto \tilde{\ell}_i(\Abf^{\neq n})$, it follows by combining \eqref{eq:bound-on-elli} and \eqref{eq:elli-sum} that
\begin{equation}
	\qbf(i) = \frac{\tilde{\ell}_i(\Abf^{\neq n})}{\sum_{i=1}^{\Pi_{j \neq n} I_j} \tilde{\ell}_i(\Abf^{\neq n})} \geq \frac{1}{3} \frac{\ell_i(\Abf^{\neq n})}{\rank(\Abf^{\neq n})}.
\end{equation}
In view of Definition~\ref{def:leverage-score-sampling}, Theorem~\ref{thm:leverage-score-estimation} therefore implies that $\Sbf \sim \Dc(J_2, \qbf)$ is a leverage score sampling matrix for $(\Abf^{\neq n}, 1/3)$ if the event $E_1$ is true.
From Theorem~\ref{thm:least-squares-guarantees}, it then follows that $\Pb(E_2 \mid E_1 ) \geq 1 - \delta/2$ if $J_2$ is chosen as in \eqref{eq:cp-J3}.
With the choices of $J_1$ and $J_2$ above we now have
\begin{equation} \label{eq:event-formula}
	\Pb(E_2) \geq \Pb(E_1, E_2) = \Pb(E_1) \Pb(E_2 \mid E_1) \geq (1-\delta/2)^2 \geq 1-\delta
\end{equation}
which is what we wanted to show.
\end{proof}

\subsection{Proof of Lemma~\ref{lem:cp-sampling-results}} \label{sec:proof-cp-sampling-results}

Recall that $\Phibf \defeq \Vbf_1 \Sigmabf_1^{-1} (\Vbf_1 \Sigmabf_1^{-1})^\top$, where $\Psibf \Abf^{\neq n} = \Ubf_1 \Sigmabf_1 \Vbf_1^\top$ is a compact SVD.
From \eqref{eq:u-tilde} we have
\begin{equation} \label{eq:cp-lev-score-estimate}
	\tilde{\ell_i}(\Abf^{\neq n}) 
	= \ebf_i^\top \Abf^{\neq n} \Phibf \Abf^{\neq n \top} \ebf_i 
	= (\Abf^{\neq n} \Phibf \Abf^{\neq n \top})(i,i) 
	= \sum_{r, k} \Phibf(r,k) \cdot \prod_{j \neq n} \Abf^{(j)}(i_j,r) \Abf^{(j)}(i_j,k),
\end{equation}
where the last equality follows from the definition of $\Abf^{\neq n}$ in \eqref{eq:cp-design-matrix}, and $i = \overline{i_1 \cdots i_{n-1} i_{n+1} \cdots i_N}$.
Using \eqref{eq:cp-lev-score-estimate}, we can compute the normalization constant $C$ as
\begin{equation}
	C 
	\defeq \sum_{i} \tilde{\ell}_i(\Abf^{\neq n}) 
	= \sum_{r, k} \Phibf(r,k) \cdot \prod_{j \neq n} \sum_{i_j} \Abf^{(j)}(i_j,r) \Abf^{(j)}(i_j,k)
	= \sum_{r, k} \Phibf(r,k) \cdot \prod_{j \neq n} (\Abf^{(j)\top} \Abf^{(j)})(r,k),
\end{equation}
which proves \eqref{eq:cp-sampling-results-C}.

To make notation a bit less cumbersome, we will use the abbreviated notation
\begin{equation}
	\sum_{\{i_j\}_{j>m, j \neq n}} \;\;\;\; \text{to denote} \;\;\;\;
	\begin{cases}
		\sum_{i_{m+1}} \cdots \sum_{i_{n-1}} \sum_{i_{n+1}} \cdots \sum_{i_N} & \text{if } n > m, \\
		\sum_{i_{m+1}} \cdots \sum_{i_N} & \text{otherwise}.
	\end{cases}
\end{equation}
Similar abbreviated notation will also be used later on for other indices.
We can again use \eqref{eq:cp-lev-score-estimate} to compute the marginal probabilities of drawing $(i_j)_{j \leq m, j \neq n}$ as
\begin{equation} \label{eq:cp-sample-subsequent-indices}
\begin{aligned}
	\Pb((i_j)_{j \leq m, j \neq n}) 
	&= \frac{1}{C} \sum_{\{ i_j \}_{j > m, j \neq n}} \tilde{\ell}_i(\Abf^{\neq n}) \\
	&= \frac{1}{C} \sum_{\{ i_j \}_{j > m, j \neq n}} \Big( \sum_{r, k} \Phibf(r,k) \prod_{j \neq n} \Abf^{(j)}(i_j, r) \Abf^{(j)}(i_j,k) \Big) \\
	&= \frac{1}{C} \sum_{r,k} \Phibf(r,k)  \Big(\prod_{\substack{j \leq m \\ j \neq n}} \Abf^{(j)}(i_j, r) \Abf^{(j)}(i_j,k) \Big)  \Big(\prod_{\substack{j > m \\ j \neq n}} \big(\Abf^{(j)\top} \Abf^{(j)} \big)(r,k) \Big),
\end{aligned}
\end{equation}
which proves \eqref{eq:cp-sampling-results-joint-prob}.

\subsection{Proof of Theorem~\ref{thm:tr-main}} \label{sec:proof-tr-main}

The strategy of this proof is similar to that for the proof of Theorem~\ref{thm:cp-main} given in Section~\ref{sec:proof-cp-main}.
Let $E_1$ denote the event that $\Psibf$ is a $1/3$-subspace embedding for $\Gbf_{[2]}^{\neq n}$.
Following the notation used in Theorem~\ref{thm:leverage-score-estimation}, let $\Psibf \Gbf_{[2]}^{\neq n} = \Ubf_1 \Sigmabf_1 \Vbf_1^\top$ be a compact SVD.
Let $E_2$ denote the event that \eqref{eq:tr-main-relative-error} is true.

The matrix $\Gbf_{[2]}^{\neq n}$ is of size $\prod_{j \neq n} I_j \times R_{n-1} R_n$. 
According to Corollary~\ref{corr:recursive-sketch}, we can therefore guarantee that $\Pb(E_1) \geq 1 - \delta/2$ if we choose $J_1$ as in \eqref{eq:tr-J1}.
Following the same line of reasoning as in the proof of Theorem~\ref{thm:cp-main}, we can show that the choice $\gamma=1/3$ in Theorem~\ref{thm:leverage-score-estimation} combined with the fact $\qbf(i) \propto \tilde{\ell}_i(\Gbf_{[2]}^{\neq n})$ implies that
\begin{equation}
	\qbf(i) = \frac{\tilde{\ell}_i(\Gbf_{[2]}^{\neq n})}{\sum_{i=1}^{\Pi_{j \neq n} I_j} \tilde{\ell}_i(\Gbf_{[2]}^{\neq n})} \geq \frac{1}{3} \frac{\ell_i(\Gbf_{[2]}^{\neq n})}{\rank(\Gbf_{[2]}^{\neq n})}.
\end{equation}
In view of Definition~\ref{def:leverage-score-sampling}, Theorem~\ref{thm:leverage-score-estimation} therefore implies that $\Sbf \sim \Dc(J_2, \qbf)$ is a leverage score sampling matrix for $(\Gbf_{[2]}^{\neq n}, 1/3)$ if the event $E_1$ is true.
From Theorem~\ref{thm:least-squares-guarantees}, it then follows that $\Pb(E_2 \mid E_1) \geq 1 - \delta/2$ if $J_2$ is chosen as in \eqref{eq:tr-J3}.
With the choices of $J_1$ and $J_2$ above and the formula \eqref{eq:event-formula}, we have that $\Pb(E_2) \geq 1-\delta$, which is what we wanted to show.

\subsection{Proof of Lemma~\ref{lem:correctness-of-tr-sketch}} \label{sec:proof-correctness-of-tr-sketch}

It follows directly from Definitions~\ref{def:unfolding} and \ref{def:subchain} that
\begin{equation} \label{eq:unfolded-subchain-tensor-w-ordering}
	\Gbf_{[2]}^{\neq n}(\overline{i_{n+1} \cdots i_N i_1 \cdots i_{n-1}}, \overline{r_{n-1} r_n}) = \sum_{\{r_j\}_{j \neq n-1, n}} \prod_{j=1}^{N-1} \Gbf_{[2]}^{(\wbf(j))}(i_{\wbf(j)}, \overline{r_{\wbf(j)} r_{\wbf(j)-1}}),
\end{equation}
and therefore the columns of $\Gbf_{[2]}^{\neq n}$ can be written as
\begin{equation}
	\Gbf_{[2]}^{\neq n}(:, \overline{r_{n-1} r_n}) = \sum_{\{r_j\}_{j \neq n-1, n}} \bigotimes_{j=1}^{N-1} \Gbf_{[2]}^{(\wbf(j))}(:, \overline{r_{\wbf(j)} r_{\wbf(j)-1}}).
\end{equation}

Let $q \defeq \lceil \log_2 (N-1) \rceil$.
Using the definition of the recursive sketch in Section~\ref{sec:gaussian-and-recursive-sketches} and the factorization in \eqref{eq:recursive-sketch-factorization}, we have
\begin{equation} \label{eq:justification-tr-sketch-1}
	\Psibf \Gbf_{[2]}^{\neq n}(:, \overline{r_{n-1} r_n}) 
	= \Tbf^{(q)} \Tbf^{(q-1)} \cdots \Tbf^{(1)} \Cbf \sum_{\{r_j\}_{j \neq n-1, n}} \bigg(\Big(\bigotimes_{j=1}^{N-1} \Gbf_{[2]}^{(\wbf(j))}(:, \overline{r_{\wbf(j)} r_{\wbf(j)-1}}) \Big) \otimes \ebf_1^{\otimes(2^q - (N-1))} \bigg).
\end{equation}
The notation in the equation above is quite cumbersome. 
In particular, the ordering of the matrices $\Gbf_{[2]}^{(j)}$ in the Kronecker product is somewhat awkward. 
To alleviate the issue somewhat, we define $\Hbf^{(j)}$ for $j \in [2^q]$ as we did in Section~\ref{sec:tr-efficient-subspace-embedding}:
\begin{itemize}
	\item Let $\Hbf^{(1)} \in \Rb^{I_{n-1} \times R_{n-2}}$ be a matrix with columns $\Hbf^{(1)}(:, k) \defeq \Gbf_{[2]}^{(n-1)}(:, \overline{r_{n-1} k})$ for $k \in [R_{n-2}]$.
	\item Let $\Hbf^{(j)} \defeq \Gbf_{[2]}^{(\wbf(j))} \in \Rb^{I_{\wbf(j)} \times R_{\wbf(j)} R_{\wbf(j)-1}}$ for $2 \leq j \leq N-2$.
	\item Let $\Hbf^{(N-1)} \in \Rb^{I_{n+1} \times R_{n+1}}$ be a matrix with columns $\Hbf^{(N-1)}(:, k) \defeq \Gbf_{[2]}^{(n+1)}(:, \overline{k r_n})$ for $k \in [R_{n+1}]$.
	\item Let $\Hbf^{(j)} \defeq \ebf_1 \in \Rb^{\max_{j \neq n} I_j}$ be a column vector for $N \leq j \leq 2^q$.
\end{itemize}
Moreover, we also define the numbers $K^{(0)}_j$ for $j \in [2^q+1]$ as in Section~\ref{sec:tr-efficient-subspace-embedding}:
\begin{equation}
	K_j^{(0)} \defeq
	\begin{cases}
		R_{\wbf(j)} & \text{if } 2 \leq j \leq N-1, \\
		
		1 & \text{otherwise}.
	\end{cases}
\end{equation}
With this new notation, we can write \eqref{eq:justification-tr-sketch-1} as 
\begin{equation} \label{eq:justification-tr-sketch-2}
	\Psibf \Gbf_{[2]}^{\neq n}(:, \overline{r_{n-1} r_n}) 
	= \Tbf^{(q)} \Tbf^{(q-1)} \cdots \Tbf^{(1)} \Cbf \sum_{\{k_j\}_{j=1}^{2^{q}+1}} \bigotimes_{j=1}^{2^q} \Hbf^{(j)}(:, \overline{k_j k_{j+1}}),
\end{equation}
where each summation index $k_j$ goes over values $k_j \in [K_j^{(0)}]$.
Using Lemma~\ref{lemma:kronecker-property-1}, Equation \eqref{eq:justification-tr-sketch-2} can be written as
\begin{equation}
\begin{aligned} \label{eq:justification-tr-sketch-3}
	\Psibf \Gbf_{[2]}^{\neq n}(:, \overline{r_{n-1} r_n}) 
	&= \Tbf^{(q)} \Tbf^{(q-1)} \cdots \Tbf^{(1)} \sum_{\{k_j\}_{j=1}^{2^{q}+1}} \bigotimes_{j=1}^{2^q} \Cbf_j \Hbf^{(j)}(:, \overline{k_j k_{j+1}}) \\
	&= \Tbf^{(q)} \Tbf^{(q-1)} \cdots \Tbf^{(1)} \sum_{\{k_j\}_{j=1}^{2^{q}+1}} \bigotimes_{j=1}^{2^q} \Ybf_{j}^{(0)}(:, \overline{k_j k_{j+1}}) , \\
\end{aligned}
\end{equation}
where $\Ybf_j^{(0)}$ was defined in \eqref{eq:Y0j}. 
Recalling that $\Tbf^{(1)} \defeq \bigotimes_{j=1}^{2^{q-1}} \Tbf^{(1)}_j$, we may further rewrite \eqref{eq:justification-tr-sketch-3} as
\begin{equation}
\begin{aligned}
	\Psibf \Gbf_{[2]}^{\neq n}(:, \overline{r_{n-1} r_n}) 
	&= \Tbf^{(q)} \Tbf^{(q-1)} \cdots \Tbf^{(2)} \Big( \bigotimes_{j=1}^{2^{q-1}} \Tbf^{(1)}_j \Big) \sum_{\{k_j\}_{j=1}^{2^{q}+1}} \bigotimes_{j=1}^{2^{q-1}} (\Ybf^{(0)}_{2j-1}(:, \overline{k_{2j-1} k_{2j}}) \otimes \Ybf^{(0)}_{2j}(:, \overline{k_{2j} k_{2j+1}})) \\
	&= \Tbf^{(q)} \Tbf^{(q-1)} \cdots \Tbf^{(2)} \sum_{\{k_{2j-1}\}_{j=1}^{2^{q-1}+1}} \bigotimes_{j=1}^{2^{q-1}} \sum_{k_{2j}} \Tbf^{(1)}_j (\Ybf^{(0)}_{2j-1}(:, \overline{k_{2j-1} k_{2j}}) \otimes \Ybf^{(0)}_{2j}(:, \overline{k_{2j} k_{2j+1}})) \\
	&= \Tbf^{(q)} \Tbf^{(q-1)} \cdots \Tbf^{(2)} \sum_{\{k_{2j-1}\}_{j=1}^{2^{q-1}+1}} \bigotimes_{j=1}^{2^{q-1}} \Ybf^{(1)}_j(:, \overline{k_{2j-1} k_{2j+1}}), \\
\end{aligned}
\end{equation}
where the second equality follows from Lemma~\ref{lemma:kronecker-property-1}, and each $\Ybf^{(1)}_j$ is defined as in \eqref{eq:Ymj}.
Defining $K_j^{(1)} \defeq K_{2j-1}^{(0)}$ for $j \in [2^{q-1}+1]$, we can further rewrite the equation above as
\begin{equation} \label{eq:justification-tr-sketch-4}
	\Psibf \Gbf_{[2]}^{\neq n}(:, \overline{r_{n-1} r_n}) = \Tbf^{(q)} \Tbf^{(q-1)} \cdots \Tbf^{(2)} \sum_{\{k_{j}\}_{j=1}^{2^{q-1}+1}} \bigotimes_{j=1}^{2^{q-1}} \Ybf^{(1)}_j(:, \overline{k_{j} k_{j+1}}),
\end{equation}
where each summation index $k_j$ now goes over the values $k_j \in [K_j^{(1)}]$.
In general, for $m \in [q]$, we have
\begin{equation} \label{eq:justification-tr-sketch-5}
\begin{aligned}
	&\Tbf^{(q)} \Tbf^{(q-1)} \cdots \Tbf^{(m)} \sum_{\{k_{j}\}_{j=1}^{2^{q-m+1}+1}} \bigotimes_{j=1}^{2^{q-m+1}} \Ybf^{(m-1)}_j(:, \overline{k_{j} k_{j+1}}) \\
	&= \Tbf^{(q)} \Tbf^{(q-1)} \cdots \Tbf^{(m+1)} \sum_{\{\ell_{j}\}_{j=1}^{2^{q-m}+1}} \bigotimes_{j=1}^{2^{q-m}} \Ybf^{(m)}_j(:, \overline{\ell_{j} \ell_{j+1}}),
\end{aligned}
\end{equation}
where the summation indices $k_j$ and $\ell_j$ take on values $k_j \in [K_j^{(m-1)}]$ and $\ell_j \in [K_j^{(m)}]$, respectively, where $K_j^{(m)} \defeq K_{2j-1}^{(m-1)}$ for $j \in [2^{q-m}+1]$, and where each $\Ybf^{(m)}_j$ is defined as in \eqref{eq:Ymj}.
Combining \eqref{eq:justification-tr-sketch-4} and \eqref{eq:justification-tr-sketch-5}, it follows by induction that
\begin{equation}
	\Psibf \Gbf_{[2]}^{\neq n} (:, \overline{r_{n-1} r_n}) = \sum_{k_1 \in [K^{(q)}_1]} \sum_{k_2 \in [K^{(q)}_2]} \Ybf_{1}^{(q)}(:, \overline{k_1 k_2}) = \Ybf_{1}^{(q)},
\end{equation}
where the last equality follows since $K_1^{(q)} = K_2^{(q)} = 1$.

\subsection{Proof of Lemma~\ref{lem:tr-sampling-results}} \label{sec:proof-tr-sampling-results}

Throughout the following computations the summation indices go over $i = \overline{i_{n+1} \cdots i_N i_1 \cdots i_{n-1}} \in [\prod_{j \neq n} I_j]$ with $i_j \in [I_j]$ and $r_j, k_j \in [R_j]$ for each $j \in [N]$.
Recall that $\Phibf \defeq \Vbf_1 \Sigmabf_1^{-1} (\Vbf_1 \Sigmabf_1^{-1})^\top$, where $\Psibf \Gbf_{[2]}^{\neq n} = \Ubf_1 \Sigmabf_1 \Vbf_1^\top$ is a compact SVD.
From \eqref{eq:u-tilde} we have
\begin{equation} \label{eq:tr-lev-score-estimate}
\begin{aligned}
	\tilde{\ell}_i(\Gbf_{[2]}^{\neq n}) 
	&= \ebf_i^\top \Gbf_{[2]}^{\neq n} \Phibf \Gbf_{[2]}^{\neq n \top} \ebf_i 
	= \big( \Gbf_{[2]}^{\neq n} \Phibf \Gbf_{[2]}^{\neq n \top} \big)(i,i) \\
	&= \sum_{\substack{r_{n-1}, r_n \\ k_{n-1}, k_n}} \Gbf_{[2]}^{\neq n}(i, \overline{r_{n-1} r_n}) \Phibf(\overline{r_{n-1} r_n}, \overline{k_{n-1} k_n}) \Gbf_{[2]}^{\neq n}(i, \overline{k_{n-1} k_n}).
\end{aligned}
\end{equation}
From Definitions~\ref{def:unfolding} and \ref{def:subchain} it follows that%
\footnote{
	The only difference between \eqref{eq:unfolded-subchain-tensor-w-ordering} and \eqref{eq:unfolded-subchain-tensor} is that the terms in the product are arranged in a different order.
}
\begin{equation} \label{eq:unfolded-subchain-tensor}
	\Gbf_{[2]}^{\neq n}(i, \overline{r_{n-1} r_{n}})
	= \Gbf_{[2]}^{\neq n}(\overline{i_{n+1} \cdots i_{N} i_1 \cdots i_{n-1}}, \overline{r_{n-1} r_{n}})
	= \sum_{\{r_j\}_{j \neq n-1, n}} \prod_{j \neq n} \Gbf_{[2]}^{(j)}(i_{j}, \overline{r_{j} r_{j-1}}).  
\end{equation}
Using \eqref{eq:tr-lev-score-estimate} and \eqref{eq:unfolded-subchain-tensor}, we have
\begin{equation} \label{eq:tr-constant-1}
\begin{aligned}
	C 
	&\defeq \sum_i \tilde{\ell}_i(\Gbf_{[2]}^{\neq n}) \\ 
	&= \sum_i \sum_{\substack{r_{n-1}, r_n \\ k_{n-1}, k_n}} \Big(\sum_{\{r_j\}_{j \neq n-1, n}} \prod_{j \neq n} \Gbf_{[2]}^{(j)}(i_{j}, \overline{r_{j} r_{j-1}})\Big) \Phibf(\overline{r_{n-1} r_{n}}, \overline{k_{n-1} k_n}) \Big(\sum_{\{k_j\}_{j \neq n-1, n}} \prod_{j \neq n} \Gbf_{[2]}^{(j)}(i_{j}, \overline{k_{j} k_{j-1}})\Big) \\
	&= \sum_{\substack{r_1, \ldots, r_N \\ k_1, \ldots, k_N}} \Phibf(\overline{r_{n-1} r_n}, \overline{k_{n-1} k_n})  \prod_{j \neq n} \Big( \sum_{i_j} \Gbf_{[2]}^{(j)}(i_{j}, \overline{r_{j} r_{j-1}}) \Gbf_{[2]}^{(j)}(i_{j}, \overline{k_{j} k_{j-1}}) \Big) \\
	&= \sum_{\substack{r_1, \ldots, r_N \\ k_1, \ldots, k_N}} \Phibf(\overline{r_{n-1} r_n}, \overline{k_{n-1} k_n})  \prod_{j \neq n} \big(\Gbf_{[2]}^{(j) \top} \Gbf_{[2]}^{(j)}\big)(\overline{r_{j} r_{j-1}}, \overline{k_{j} k_{j-1}}),
\end{aligned}
\end{equation}
which proves the expression in \eqref{eq:tr-sampling-results-C}.

Moreover, using \eqref{eq:tr-lev-score-estimate} and \eqref{eq:unfolded-subchain-tensor} we have that the marginal probability of drawing $(i_j)_{j \leq m, j \neq n}$ is
\begin{equation}
\begin{aligned}
	&\Pb((i_j)_{j \leq m, j \neq n}) 
	= \frac{1}{C} \sum_{\{i_j\}_{j > m, j \neq n}} \tilde{\ell}_i(\Gbf_{[2]}^{\neq n}) \\
	&= \frac{1}{C} \sum_{\{i_j\}_{j > m, j \neq n}} \sum_{\substack{r_{n-1}, r_n \\ k_{n-1}, k_n}} \Phibf(\overline{r_{n-1} r_n}, \overline{k_{n-1} k_n}) \Big( \sum_{\{r_j\}_{j \neq n-1, n}} \prod_{j \neq n} \Gbf_{[2]}^{(j)}(i_j, \overline{r_{j} r_{j-1}}) \Big) \Big( \sum_{\{k_j\}_{j \neq n-1, n}} \prod_{j \neq n} \Gbf_{[2]}^{(j)}(i_j, \overline{k_{j} k_{j-1}}) \Big) \\
	&= \frac{1}{C} \sum_{\substack{r_1, \ldots, r_N \\ k_1, \ldots, k_N}} \Phibf(\overline{r_{n-1} r_n}, \overline{k_{n-1} k_n}) \Big( \prod_{\substack{j \leq m \\ j \neq n}} \Gbf_{[2]}^{(j)}(i_j, \overline{r_{j}r_{j-1}}) \Gbf_{[2]}^{(j)}(i_j, \overline{k_{j}k_{j-1}}) \Big) \Big( \prod_{\substack{j > m \\ j \neq n}} \big(\Gbf_{[2]}^{(j) \top} \Gbf_{[2]}^{(j)}\big)(\overline{r_{j} r_{j-1}}, \overline{k_{j} k_{j-1}}) \Big),
\end{aligned}
\end{equation}
which proves \eqref{eq:tr-sampling-results-joint-prob}.

\section{Detailed Complexity Analysis} \label{sec:detailed-complexity}

\subsection{CP-ALS-ES: Proposed Sampling Scheme for CP Decomposition} \label{sec:detailed-complexity-cp}

In this section we derive the computational complexity of the scheme proposed in Section~\ref{sec:sampling-for-cp}.

\paragraph{Computing $\Psibf \Abf^{\neq n}$}
First, we consider the costs of computing $\Psibf \Abf^{\neq n}$ as described in Section~\ref{sec:cp-efficient-subspace-embedding}:
\begin{itemize}
	\item Computing $\Ybf_j^{(0)}$ for all $j \in [2^q]$: 
	Each $\Cbf_j \Abf^{(\vbf(j))}$ costs at most $O(I_{\vbf(j)} R)$ to compute, and each $\Cbf_j (\ebf_1 \onebf_{1 \times R})$ costs $O(R)$ to compute.
	Since $2^q \leq 2N$, the total cost for this step is therefore $O(R \sum_{j \neq n} I_j)$.
	
	\item Computing $\Ybf_j^{(m)}$ for all $m \in [q]$ and all $j \in [2^{q-m}]$: 
	A single $J_1 \times J_1^2$ TensorSketch costs $O(R J_1 \log J_1)$ to apply to a matrix of the form $\Ybf_{2j-1}^{(m-1)} \odot \Ybf_{2j}^{(m-1)}$. 
	Such a TensorSketch is applied a total of $\sum_{m=1}^q 2^{q-m} = 2^q - 1 = O(N)$ times, so the total cost of this whole step is therefore $O(N R J_1 \log J_1)$.
\end{itemize}
The cost for computing $\Psibf \Abf^{\neq n}$ is therefore 
\begin{equation} \label{eq:cp-cost-1}
	O\bigg(R\Big(N J_1 \log J_1 + \sum_{j \neq n} I_j\Big)\bigg).
\end{equation}

\paragraph{Drawing $J_2$ Samples}
Second, we consider the cost of drawing $J_2$ samples in $[\prod_{j \neq n} I_j]$ from the distribution $\qbf$ as described in Section~\ref{sec:cp-sampling-scheme}:
\begin{itemize}
	\item One-time costs: 
	Computing the SVD of $\Psibf \Abf^{\neq n}$ costs $O(J_1 R^2)$.
	Computing $\Phibf = \Vbf_1 \Sigmabf_1^{-1} (\Vbf_1 \Sigmabf_1^{-1})^\top$ costs $O(R^3)$.
	Moreover, we can compute all products $\Abf^{(j)\top} \Abf^{(j)}$ for $j \neq n$ upfront for a cost of $O(R^2 \sum_{j \neq n} I_j)$.
	The sum of these one-time costs is $O(R^2(J_1 + R + \sum_{j \neq n} I_j))$.

	\item Cost of sampling $J_2$ indices:	
	Since each $\Abf^{(j)\top} \Abf^{(j)}$ for $j \neq n$ has already been computed, the cost of computing the probability $\Pb(i_m \mid (i_j)_{j<m, j \neq n})$ for a single set $(i_j)_{j \leq m, j \neq n}$ via \eqref{eq:cp-sampling-results-joint-prob} and \eqref{eq:conditional-probability-im} is $O(R^2 N)$.
	The total cost for computing the whole distribution for $i_m \in [I_m]$, for all $m \in [N] \setminus \{n\}$, is therefore $O(R^2 N \sum_{j \neq n} I_j)$.
	Since the main cost of sampling an index $i = \overline{i_1 \cdots i_{n-1} i_{n+1} \cdots i_N}$ is computing the distribution for each subindex, and we need to sample a total of $J_2$ samples, it follows that the total cost of drawing $J_2$ samples is $O(J_2 R^2 N \sum_{j \neq n} I_j)$.
\end{itemize}
In total, when including both one-time and per-sample costs, we get a cost for drawing $J_2$ samples from $\qbf$ of
\begin{equation} \label{eq:cp-cost-2}
	O \bigg( R^2 \Big( J_1 + R + J_2 N \sum_{j \neq n} I_j \Big) \bigg).
\end{equation}

\paragraph{Sampled Least Squares Problem}
Finally, we consider the cost of constructing and solving the sampled least squares problem once the $J_2$ samples in $[\prod_{j \neq n} I_j]$ have been drawn:
\begin{itemize}
	\item Once the $J_2$ samples in $[\prod_{j \neq n} I_j]$ are drawn, it costs $O(J_2 R N)$ to form $\Sbf \Abf^{\neq n}$, and $O(J_2 I_n)$ to form $\Sbf \Xbf_{(n)}^{\top}$.
	This can be done implicitly without forming the matrices $\Sbf$, $\Abf^{\neq n}$, and $\Xbf_{(n)}^\top$.
	
	\item The cost of computing the solution $\tilde{\Abf}^\top = (\Sbf \Abf^{\neq n})^\dagger \Sbf \Xbf_{(n)}^\top$ using a standard method (e.g., via QR decomposition) is $O(J_2 R^2 + J_2 R I_n)$; see Section~5.3.3 in \citet{golub2013MatrixComputations} for details.
\end{itemize}
In total, the costs of constructing and solving the least squares problem is therefore
\begin{equation} \label{eq:cp-cost-3}
	O(J_2 R(N + R + I_n)).
\end{equation}

\paragraph{Total Per-Iteration Cost for CP-ALS-ES} 
Recall that for each iteration of CP-ALS, we need to solve $N-1$ least squares problems.
Consequently, adding the costs in \eqref{eq:cp-cost-1}, \eqref{eq:cp-cost-2}, \eqref{eq:cp-cost-3} and multiplying by $N-1$, we get a total cost per iteration of 
\begin{equation} \label{eq:cp-full-complexity}
	O\bigg(R N^2 J_1 \log J_1 + R^2 N \Big(J_1 + R + J_2 N \sum_{j \neq n} I_j \Big) + J_2 R N I_n\bigg).
\end{equation}
If the sketch rates $J_1$ and $J_2$ are chosen according to \eqref{eq:cp-J1} and \eqref{eq:cp-J3}, this per-iteration cost becomes 
\begin{equation} \label{eq:cp-full-complexity-J-replaced}
	O\bigg( \frac{R^3 N^3}{\delta} \log\Big( \frac{R^2 N}{\delta} \Big) + \frac{R^4 N^2}{\delta} + \Big( R^3 N^2 \sum_{j \neq n} I_j + R^2 N I_n \Big) \max\Big( \log\Big(\frac{R}{\delta}\Big), \frac{1}{\varepsilon\delta} \Big)  \bigg).
\end{equation}

\subsection{TR-ALS-ES: Proposed Sampling Scheme for Tensor Ring Decomposition} \label{sec:detailed-complexity-tr}

In this section we derive the computational complexity of the scheme proposed in Section~\ref{sec:sampling-for-tr}.

\paragraph{Computing $\Psibf \Gbf_{[2]}^{\neq n}$}
First, we consider the computation of $\Psibf \Gbf_{[2]}^{\neq n}$ described in Section~\ref{sec:tr-efficient-subspace-embedding}:
\begin{itemize}
	\item Computing $\Ybf_j^{(0)}$ for all $j \in [2^q]$: 
	Recall that computing $\Cbf_j \Hbf^{(j)}$ costs $\nnz(\Hbf^{(j)})$.
	Consequently, the cost of computing all $\Ybf_j^{(0)}$ for $N \leq j \leq 2^q$ is just $O(1)$.
	The total cost for this step is therefore $O(\sum_{j = 1}^N I_j R_{j-1} R_j)$.
	
	\item Computing $\Ybf_j^{(m)}$ for all $m \in [q]$ and all $j \in [2^{q-m}]$: 
	Computing $\Tbf_j^{(m)}( \Ybf_{2j-1}^{(m-1)}(:, \overline{k_1 k_2}) \otimes \Ybf_{2j}^{(m-1)}(:, \overline{k_2 k_3}) )$ requires applying a $J_1 \times J_1^2$ TensorSketch to the Kronecker product of two vectors, which costs $O(J_1 \log J_1)$.
	This needs to be done for each $k_2 \in [K_{2j}^{(m-1)}]$ when computing the sum in \eqref{eq:Ymj}.
	This sum, in turn, needs to be computed for all $k_1 \in [K_{2j-1}^{(m-1)}]$, $k_3 \in [K_{2j+1}^{(m-1)}]$ and $j \in [2^{q-m}]$.
	Doing this for each $m \in [q]$ brings the total cost of this step to
	\begin{equation}
		O\Big( \sum_{m=1}^q \sum_{j=1}^{2^{q-m}} \sum_{k_1=1}^{K_{2j-1}^{(m-1)}} \sum_{k_2=1}^{K_{2j}^{(m-1)}} \sum_{k_3=1}^{K_{2j+1}^{(m-1)}} J_1 \log J_1 \Big).
	\end{equation}
	As we will see further down, this expression simplifies considerably if all $R_i$ are assumed to be equal.
\end{itemize}
Adding up the per-column costs above and multiplying them by the number of columns $R_{n-1} R_n$, we get that the cost for computing $\Psibf \Gbf_{[2]}^{\neq n}$ is 
\begin{equation} \label{eq:tr-cost-1}
	O\bigg(R_{n-1} R_n \Big(\sum_{j = 1}^N I_j R_{j-1} R_j + \sum_{m=1}^q \sum_{j=1}^{2^{q-m}} \sum_{k_1=1}^{K_{2j-1}^{(m-1)}} \sum_{k_2=1}^{K_{2j}^{(m-1)}} \sum_{k_3=1}^{K_{2j+1}^{(m-1)}} J_1 \log J_1 \Big)\bigg).
\end{equation}

\paragraph{Drawing $J_2$ Samples}
Second, we consider the cost of drawing $J_2$ samples in $[\prod_{j \neq n} I_j]$ from the distribution $\qbf$ as described in Section~\ref{sec:tr-sampling-scheme}:
\begin{itemize}
	\item One-time costs: 
	Computing the SVD of $\Psibf \Gbf_{[2]}^{\neq n}$ costs $O(J_1 (R_{n-1}R_n)^2)$.
	Computing $\Phibf = \Vbf_1 \Sigmabf_1^{-1} (\Vbf_1 \Sigmabf_1^{-1})^\top$ costs $O((R_{n-1}R_n)^3)$.
	Moreover, we can compute all products $\Gbf_{[2]}^{(j) \top} \Gbf_{[2]}^{(j)}$ for $j \neq n$ upfront for a cost of $O(\sum_{j \neq n} (R_{j-1} R_j)^2 I_j)$.
	The sum of these one-time costs is $O(J_1 (R_{n-1} R_n)^2 + (R_{n-1} R_n)^3 + \sum_{j \neq n} (R_{j-1} R_j)^2 I_j)$.
	
	\item Cost of sampling $J_2$ indices:
	The main cost of drawing the samples is computing the sampling distributions.
	Even though the number of terms in the sum of \eqref{eq:tr-sampling-results-joint-prob} is exponential in $N$, the joint probability distribution can be computed efficiently.
	We discuss how to do this in Remark~\ref{remark:computing-tr-probabilities}.
	The cost of doing this for one set of indices $(i_j)_{j \leq m, j \neq n}$ is given in \eqref{eq:tensor-evaluation-cost}. 
	Repeating this for all $i_j \in [I_j]$, which is required to get the distribution for the $j$th index, brings the cost to 
	\begin{equation} 
		O\Big( I_j R_N^2 \sum_{d=1}^{N-1} R_d^2 R_{d+1}^2 \Big).
	\end{equation}
	When this is repeated for all $N$ indices, and a total of $J_2$ times to get all samples, this brings the cost to 
	\begin{equation} 
		O\bigg( J_2 \Big( \sum_{j=1}^N I_j \Big) R_N^2 \sum_{d=1}^{N-1} R_d^2 R_{d+1}^2 \bigg).
	\end{equation}
\end{itemize}
Adding the one-time costs and the costs associated to computing the distributions, we get the following total cost for drawing $J_2$ samples:
\begin{equation} \label{eq:tr-cost-2}
	O\bigg( J_1 (R_{n-1} R_n)^2 + (R_{n-1} R_n)^3 + J_2 \Big(\sum_{j=1}^N I_j \Big) R_N^2 \sum_{d=1}^{N-1} R_d^2 R_{d+1}^2 \bigg).
\end{equation}

\paragraph{Sampled Least Squares Problem}
Finally, we consider the cost of constructing and solving the sampled least squares problem once the $J_2$ samples in $[\prod_{j \neq n} I_j]$ have been drawn:
\begin{itemize}
	\item Once $J_2$ samples in $[\prod_{j \neq n} I_j]$ are drawn, the sketched design matrix $\Sbf \Gbf_{[2]}^{\neq n}$ can be computed efficiently without having to form the full matrix $\Gbf_{[2]}^{\neq n}$. 
	We provide further details in Remark~\ref{remark:computing-sketched-tr-design-matrix}.
	With this approach, the cost of forming $\Sbf \Gbf_{[2]}^{\neq n}$ is
	\begin{equation}
		O\Big(J_2 R_n \sum_{j \in [N] \setminus \{n, n+1\}} R_{j-1} R_{j}\Big).
	\end{equation}
	Forming $\Sbf \Xbf_{[n]}^{\top}$ by sampling the appropriate rows costs $O(J_2 I_n)$.

	\item The cost of computing the solution $\tilde{\Gbf}^\top = (\Sbf \Gbf_{[2]}^{\neq n})^\dagger \Sbf \Xbf_{[n]}^\top$ using a standard method (e.g., via QR decomposition) is $O(J_2 (R_{n-1} R_n)^2 + J_2 R_{n-1} R_n I_n)$; see Section~5.3.3 in \citet{golub2013MatrixComputations} for details.
\end{itemize}
In total, the cost of constructing and solving the least squares problem is therefore
\begin{equation} \label{eq:tr-cost-3}
	O\bigg( J_2 \Big( R_n \sum_{j \in [N] \setminus \{ n, n+1 \}} R_{j-1} R_j + (R_{n-1} R_n)^2 + R_{n-1} R_n I_n \Big) \bigg).
\end{equation}

\paragraph{Total Per-Iteration Cost for TR-ALS-ES}
Recall that for each iteration of TR-ALS, we need to solve $N-1$ least squares problems.
Consequently, adding the costs in \eqref{eq:tr-cost-1}, \eqref{eq:tr-cost-2}, \eqref{eq:tr-cost-3} and multiplying by $N-1$, we get the total per-iteration cost.
If we assume that $R_j = R$ and $I_j = I$ for all $j \in [N]$, the expression simplifies considerably and we get a total per-iteration cost of
\begin{equation}
	O(N^2 R^5 J_1 \log J_1 + N^3 I R^6 J_2).
\end{equation}
If the sketch rates $J_1$ and $J_2$ are chosen according to \eqref{eq:tr-J1} and \eqref{eq:tr-J3}, this per-iteration cost becomes
\begin{equation}
	O\bigg( \frac{N^3 R^9}{\delta} \log\Big( \frac{N R^4}{\delta} \Big) + N^3 I R^8 \cdot \max\Big( \log\Big( \frac{R^2}{\delta} \Big), \frac{1}{\varepsilon \delta} \Big)\bigg).
\end{equation}

\begin{remark} \label{remark:computing-tr-probabilities}
	At first sight, the joint probability computation in \eqref{eq:tr-sampling-results-joint-prob} looks expensive since the number of terms in the sum is exponential in $N$.
	However, since not all summation indices $r_j$ and $k_j$ appear in every term, the summation can be done more efficiently.
	In fact, the computation \eqref{eq:tr-sampling-results-joint-prob} can be viewed as the evaluation of a tensor ring, which can be done efficiently by contracting core tensors pairwise.
	To see this, define core tensors $\Ce^{(j)}$ for $j \in [N]$ as follows:
	\begin{itemize}
		\item For $j \leq m$ and $j \neq n$, let $\Ce^{(j)} \in \Rb^{R_{j-1}^2 \times I_j \times R_{j}^2}$ be defined elementwise via
		\begin{equation}
			\Ce^{(j)}(\overline{r_{j-1} k_{j-1}}, i_j ,\overline{r_j k_j}) \defeq \Gbf_{[2]}^{(j)}(i_j, \overline{r_{j} r_{j-1}}) \Gbf_{[2]}^{(j)}(i_j, \overline{k_{j} k_{j-1}}).
		\end{equation}
		
		\item For $m < j \leq N$ and $j \neq n$, let $\Ce^{(j)} \in \Rb^{R_{j-1}^2 \times 1 \times R_j^2}$ be defined elementwise via
		\begin{equation}
			\Ce^{(j)}(\overline{r_{j-1} k_{j-1}}, 1,\overline{r_j k_j}) \defeq \big( \Gbf_{[2]}^{(j)\top} \Gbf_{[2]}^{(j)} \big)(\overline{r_{j} r_{j-1}}, \overline{k_{j} k_{j-1}}).
		\end{equation}
		
		\item For $j = n$, let $\Ce^{(j)} = \Ce^{(n)} \in \Rb^{R_{n-1}^2 \times 1 \times R_n^2}$ be defined elementwise via
		\begin{equation}
			\Ce^{(n)}(\overline{r_{n-1} k_{n-1}}, 1,\overline{r_n k_n}) \defeq \frac{1}{C} \Phibf (\overline{r_{n-1} r_n}, \overline{k_{n-1} k_n}).
		\end{equation}
	\end{itemize}
	We can now rewrite the expression in \eqref{eq:tr-sampling-results-joint-prob} as
	\begin{equation} \label{eq:joint-probability-as-contraction}
		\Pb((i_j)_{j \leq m, j \neq n}) = \TR(\Ce^{(1)}, \ldots, \Ce^{(N)})_{\xi_1, \ldots, \xi_{N}},
	\end{equation}
	where
	\begin{equation}
		\xi_j \defeq
		\begin{cases}
			i_j & \text{if } j \leq m, j \neq n, \\
			1 & \text{otherwise}.
		\end{cases}
	\end{equation}
	As discussed in \citet{zhao2016TensorRing}, the value of an entry in a tensor ring can be computed via a sequence of matrix-matrix products follows by taking the matrix trace:
	\begin{equation} \label{eq:trace-matrix-product}
		\TR(\Ce^{(1)}, \ldots, \Ce^{(N)})_{\xi_1, \ldots, \xi_{N}} = \trace\big( \Ce^{(1)}(:, \xi_1, :) \cdot \Ce^{(2)}(:, \xi_2, :) \cdots \Ce^{(N)}(:, \xi_N, :) \big),
	\end{equation}
	where each $\Ce^{(j)}(:, \xi_j, :)$ is treated as a $R_{j-1}^2 \times R_{j}^2$ matrix.
	If the matrix product in \eqref{eq:trace-matrix-product} is done left to right, evaluating the right hand side costs
	\begin{equation} \label{eq:tensor-evaluation-cost}
		O\Big( R_N^2 \sum_{j=1}^{N-1} R_j^2 R_{j+1}^2 \Big).
	\end{equation}
\end{remark}

\begin{remark} \label{remark:computing-sketched-tr-design-matrix}
	As described by \citet{malik2021SamplingBasedMethod}, it is possible to construct the sketched design matrix $\Sbf \Gbf_{[2]}^{\neq n}$ efficiently without first forming the full matrix $\Gbf_{[2]}^{\neq n}$.
	To see how, note that each row $\Gbf_{[2]}^{\neq n}(i, :)$ is the vectorization of the tensor slice $\Ge^{\neq n}(:, i, :)$ due to Definition~\ref{def:unfolding}.
	From Definition~\ref{def:subchain}, the tensor slice $\Ge^{\neq n}(:, i, :)$ is given by
	\begin{equation}
		\Ge^{\neq n}(:,\overline{i_{n+1} \cdots i_N i_1 \cdots i_{n-1}},:) = \Ge^{(n+1)}(:, i_{n+1}, :) \cdots \Ge^{(N)}(:, i_N, :) \cdot \Ge^{(1)}(:, i_1, :) \cdots \Ge^{(n-1)}(:, i_{n-1}, :).
	\end{equation}
	Suppose $\vbf \in [\prod_{j \neq n} I_j]^{J_2}$ contains the $J_2$ sampled indices corresponding to the sketch $\Sbf$.
	Let $\tilde{\Ge}^{\neq n} \in \Rb^{R_n \times J_2 \times R_{n-1}}$ be a tensor which we define as follows:
	For each $j \in [J_2]$, let $i = \overline{i_{n+1} \cdots i_N i_1 \cdots i_{n-1}} \defeq \vbf(j)$ and define
	\begin{equation} \label{eq:computing-sketched-slice}
		\tilde{\Ge}^{\neq n}(:,j,:) \defeq \frac{1}{\sqrt{J_2 \qbf(i)}} \Ge^{(n+1)}(:, i_{n+1}, :) \cdots \Ge^{(N)}(:, i_N, :) \cdot \Ge^{(1)}(:, i_1, :) \cdots \Ge^{(n-1)}(:, i_{n-1}, :).
	\end{equation}
	We now have $\Sbf \Gbf_{[2]}^{\neq n} = \tilde{\Gbf}_{[2]}^{\neq n}$.
	If the matrix product in \eqref{eq:computing-sketched-slice} is computed from left to right, it costs $O(R_n \sum_{j \in [N] \setminus \{n, n+1\}} R_{j-1} R_{j})$.
	Since this needs to be computed for each $j \in [J_2]$, the total cost for computing $\Sbf \Gbf_{[2]}^{\neq n}$ via this scheme is 
	\begin{equation}
		O\Big(J_2 R_n \sum_{j \in [N] \setminus \{n, n+1\}} R_{j-1} R_{j}\Big).
	\end{equation}
	We refer the reader to \citet{malik2021SamplingBasedMethod} for further details.
\end{remark}

\subsection{Complexity Analysis of Competing Methods} \label{sec:detailed-complexity-other}

In this section we provide a few notes on how we computed the computational complexity of the other methods we compare with in Tables~\ref{tab:cp-complexity-comparison} and \ref{tab:tr-complexity-comparison}.

\subsubsection{CP-ALS}

The standard way to implement CP-ALS is given in Figure~3.3 in \citet{kolda2009TensorDecompositions}.
The leading order cost per least squares solve for that algorithm is
\begin{equation}
	O(N I R^2 + R^3 + N I^{N-1} R + I^N R).
\end{equation}
Since $N$ such least squares problems need to be solved each iteration, the per-iteration cost is
\begin{equation}
	O(N^2 I R^2 + N R^3 + N^2 I^{N-1} R + N I^N R).
\end{equation}

When $N$ is large, this becomes $O(N (N+I) I^{N-1} R)$ which is what we report in Table~\ref{tab:cp-complexity-comparison}.

\subsubsection{SPALS}

\citet{cheng2016SPALSFast} only give the sampling complexity for the case when $N=3$ in their paper.
For arbitrary $N$, and without any assumptions on the rank of the factor matrices or the Khatri--Rao product design matrix, their scheme requires $J \gtrsim R^N \log(I_n / \delta) / \varepsilon^2$ samples when solving for the $n$th factor matrix in order to achieve the additive error guarantees in Theorem~4.1 of their paper.%
\footnote{
	If the Khatri--Rao product design matrix is full rank, which happens if all factor matrices are full rank, then $J \gtrsim R^{N-1} \log(I_n / \delta) / \varepsilon^2$ samples will suffice.
}

SPALS requires a one-time upfront cost of $\nnz(\Xe)$ in order to compute the second term in Equation~(5) in \citet{cheng2016SPALSFast}. 
In SPALS, the $n$th factor is updated via
\begin{equation}
	\Abf^{(n)} = \Xbf_{(n)} \Sbf^\top \Sbf \Big( \kr_{\substack{j=N \\ j \neq n}}^{1} \Abf^{(j)} \Big) \Big( \startimes_{\substack{j=1 \\ j \neq n}}^N \Abf^{(j) \top} \Abf^{(j)} \Big)^{-1},
\end{equation}
where $\Sbf$ is a sampling matrix and $\circledast$ denotes elementwise (Hadamard) product.
When this is computed in the appropriate order, and if $\log$ factors are ignored and we assume that $I_n = I$ for all $n \in [N]$, then the cost of computing $\Abf^{(n)}$ is 
\begin{equation}
	\tilde{O}( N I R^2 + (N + I) R^{N+1} / \varepsilon^2 ).
\end{equation}
Notice that the cost of computing the sampling distribution is dominated by the cost above.
Since $N$ factor matrices need to be updated per iteration, the total per-iteration cost is
\begin{equation}
	\tilde{O} (N^2 I R^2 + N (N+I) R^{N+1} / \varepsilon^2).
\end{equation} 
When $N$ is large, this becomes $\tilde{O} ( N (N+I) R^{N+1} / \varepsilon^2 )$, which is what we report in Table~\ref{tab:cp-complexity-comparison}.

\subsubsection{CP-ARLS-LEV}

From Theorem~8 in \citet{larsen2020PracticalLeverageBased}, the sampling complexity for CP-ARLS-LEV required to achieve relative error guarantees when solving for the $n$th factor matrix is $J \gtrsim R^{N-1} \max(\log(R/\delta), 1/(\delta \varepsilon))$.
Solving the sampled least squares problem, which has a design matrix of size $J \times R$ and $I_n$ right hand sides via e.g.\ QR decomposition (see Section~5.3.3 in \citet{golub2013MatrixComputations}) will therefore cost $O((R + I_n) R^N \max(\log(R/\delta), 1/(\delta \varepsilon)))$.
Each iteration requires solving $N$ such least squares problems.
If we assume that $I_n = I$ for all $n \in [N]$ and ignore $\log$ factors, the per-iteration cost becomes
\begin{equation}
	\tilde{O} \big( N ( R + I ) R^N / (\delta \varepsilon) \big),
\end{equation}
which is what we report in Table~\ref{tab:cp-complexity-comparison}.

Consider the least squares problem in \eqref{eq:cp-optimization-An-matrix} with the design matrix $\Abf^{\neq n}$ defined in as in \eqref{eq:cp-design-matrix}.
When the leverage score sampling distribution is estimated as in CP-ARLS-LEV, the exponential dependence on $N$ in the sampling complexity cannot be improved.
The following example provides a concrete example when the exponential dependence is required.
\begin{example}
	Without loss of generality, consider the case $n = N$ in which case the least squares design matrix in \eqref{eq:cp-design-matrix} is
	\begin{equation}
		\Abf^{\neq N} = \Abf^{(N-1)} \odot \cdots \odot \Abf^{(1)}.
	\end{equation}
	Suppose all $\Abf^{(j)}$ for $j \in [J-1]$ are of size $R \times R$ and defined as
	\begin{equation}
		\Abf^{(j)} \defeq 
		\begin{bmatrix}
			1 		& \zerobf \\
			\zerobf & \Omegabf^{(j)}
		\end{bmatrix}
	\end{equation}
	where each $\Omegabf^{(j)} \in \Rb^{(R-1) \times (R-1)}$ has i.i.d.\ standard Gaussian entries.
	We assume the matrices $\Omegabf^{(j)}$ are all full-rank, which is true almost surely.
	The first column and row of $\Abf^{\neq N}$ are $\ebf_1$ and $\ebf_1^\top$, respectively.
	For $r \geq 2$ we have
	\begin{equation}
		\Abf^{\neq N}(:,r) = 
		\begin{bmatrix}
			0 \\
			\Omegabf^{(N-1)}(:,r)
		\end{bmatrix}
		\otimes \cdots \otimes
		\begin{bmatrix}
			0 \\
			\Omegabf^{(1)}(:,r)
		\end{bmatrix}.
	\end{equation}
	Since all the matrices $\Omegabf^{(j)}$ are full-rank it follows that their Kronecker product is full-rank (this follows from Theorem~4.2.15 in \citet{horn1994TopicsMatrix}).
	Since the columns $\Abf^{\neq N}(:,r)$ for $r \geq 2$ are equal to columns of $\Omegabf^{(N-1)} \otimes \cdots \otimes \Omegabf^{(1)}$ with some added zeros, it follows that they are linearly independent, and therefore the submatrix $\Gammabf \defeq (\Abf^{\neq N}(i,j))_{i \geq 2, j \geq 2}$ is full-rank.
	We may write 
	\begin{equation}
		\Abf^{\neq N} = 
		\begin{bmatrix}
			1 		& \zerobf \\
			\zerobf & \Gammabf
		\end{bmatrix} \in \Rb^{R^{N-1} \times R}.
	\end{equation}
	If a sampling matrix $\Sbf$ does not sample the first row of $\Abf^{\neq N}$, then $\Sbf \Abf^{\neq N}$ will be rank-deficient and relative error guarantees therefore unachievable.
	Since all $\Abf^{(j)}$ are square and full-rank, the sampling procedure used in CP-ARLS-LEV will sample rows of $\Abf^{\neq N}$ uniformly.
	In order to sample the first row of $\Abf^{\neq N}$ with probability at least 0.5 with uniform sampling, we clearly need to sample at least half of all rows of $\Abf^{\neq N}$, i.e., we need $J \geq R^{N-1}/2$.
\end{example}

\subsubsection{Methods for Tensor Ring Decomposition}

The complexities we report in Table~\ref{tab:tr-complexity-comparison} for other methods where taken directly from Table~1 in \citet{malik2021SamplingBasedMethod}.

\section{Additional Experiment Details} \label{sec:additional-experiments}

\subsection{Details on Algorithm Implementations}

Our implementation of CP-ARLS-LEV is based on Algorithm~3 in \citet{larsen2020PracticalLeverageBased}.
We do not use any hybrid-deterministic sampling, but we do combine repeated rows.
Some key functionality required for our CP-ALS-ES is written in C and incorporated into Matlab via the MEX interface.
Our own TR-ALS-ES is implemented by appropriately modifying the Matlab code for TR-ALS-Sampled by \citet{malik2021SamplingBasedMethod}.

\subsection{Datasets}

The photo used for the sampling distribution comparison was taken by Sebastian M\"{u}ller on Unsplash and is available at \url{https://unsplash.com/photos/l54ZALpH2_I}.
We converted this figure to gray scale by averaging the three color channels. 
We also cropped the image slightly to make the width and height a power of 2.
The tensorization is done following the ideas for visual data tensorization discussed in \citet{yuan2019HighorderTensor}.
Please see our code for precise details.

The COIL-100 dataset was created by \citet{nene1996ColumbiaObject} and is available for download at \url{https://www.cs.columbia.edu/CAVE/software/softlib/coil-100.php}.

\subsection{Sampling Distribution Plots and Computational Time}

We have included figures below that compare the sampling distributions used by our methods with those used by the previous state-of-the-art methods in the least squares problem considered in the first experiment in Section~\ref{sec:experiments}.
For a rank-10 CP decomposition of the tabby cat tensor, Figure~\ref{fig:CP-rank-10-probability-comparison} shows the exact leverage score distribution ($\pbf$ in Definition~\ref{def:leverage-score-sampling}), the sampling distribution used by CP-ARLS-LEV, and a realization (for $J_1 = 1000$) of the distribution our CP-ALS-ES uses.
Figure~\ref{fig:CP-rank-20-probability-comparison} shows the same things as Figure~\ref{fig:CP-rank-10-probability-comparison}, but for a rank-20 CP decomposition. 
For a rank-$(3,\ldots,3)$ tensor ring decomposition of the tabby cat tensor, Figure~\ref{fig:TR-rank-3-probability-comparison} shows the exact leverage score distribution, the sampling distribution used by TR-ALS-Sampled, and a realization (for $J_1=1000$) of the distribution our TR-ALS-ES uses.
Figure~\ref{fig:TR-rank-5-probability-comparison} shows the same things as Figure~\ref{fig:TR-rank-3-probability-comparison}, but for a rank-$(5,\ldots,5)$ tensor ring decomposition. 

Notice that the sampling distribution that our methods use follow the exact leverage score sampling distribution closely. 
The distributions used by CP-ARLS-LEV and TR-ALS-Sampled are less accurate.
In particular, when $R > I = 16$ for the CP decomposition (Figure~\ref{fig:CP-rank-20-probability-comparison}) or when $R_{n-1}R_n > I = 16$ for the tensor ring decomposition (Figure~\ref{fig:TR-rank-5-probability-comparison}), CP-ARLS-LEV and TR-ALS-Sampled sample from \emph{a uniform distribution}.
This is not an anomaly, but rather a direct consequence from how those methods estimate the leverage scores.
Our proposed methods, by contrast, handle those cases well.

\begin{figure}[h]
	\centering  
	\includegraphics[width=1\columnwidth]{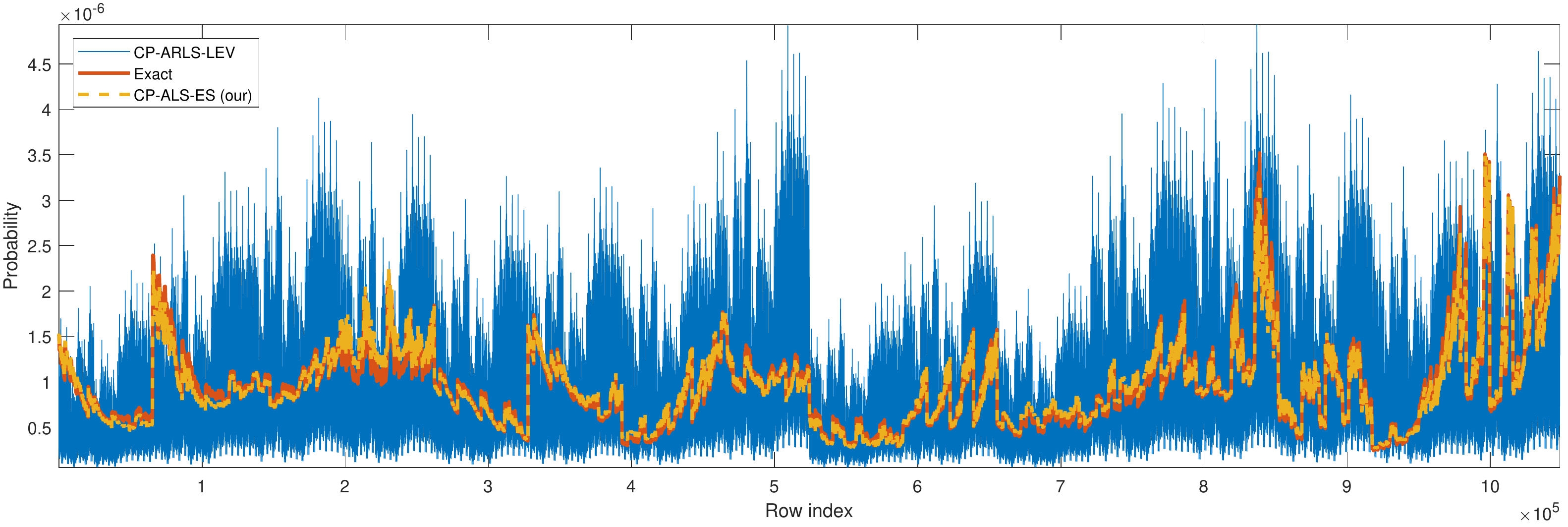}
	\caption{
		Comparison of the exact leverage score distribution, the sampling distribution used by CP-ARLS-LEV, and a realization (for $J_1=1000$) of the distribution used by our CP-ALS-ES.
		The least squares problem corresponds to solving for the 6th factor matrix in a rank-10 CP decomposition of the 6-way tabby cat tensor.
	}
	\label{fig:CP-rank-10-probability-comparison}
\end{figure}

\begin{figure}[h]
	\centering  
	\includegraphics[width=1\columnwidth]{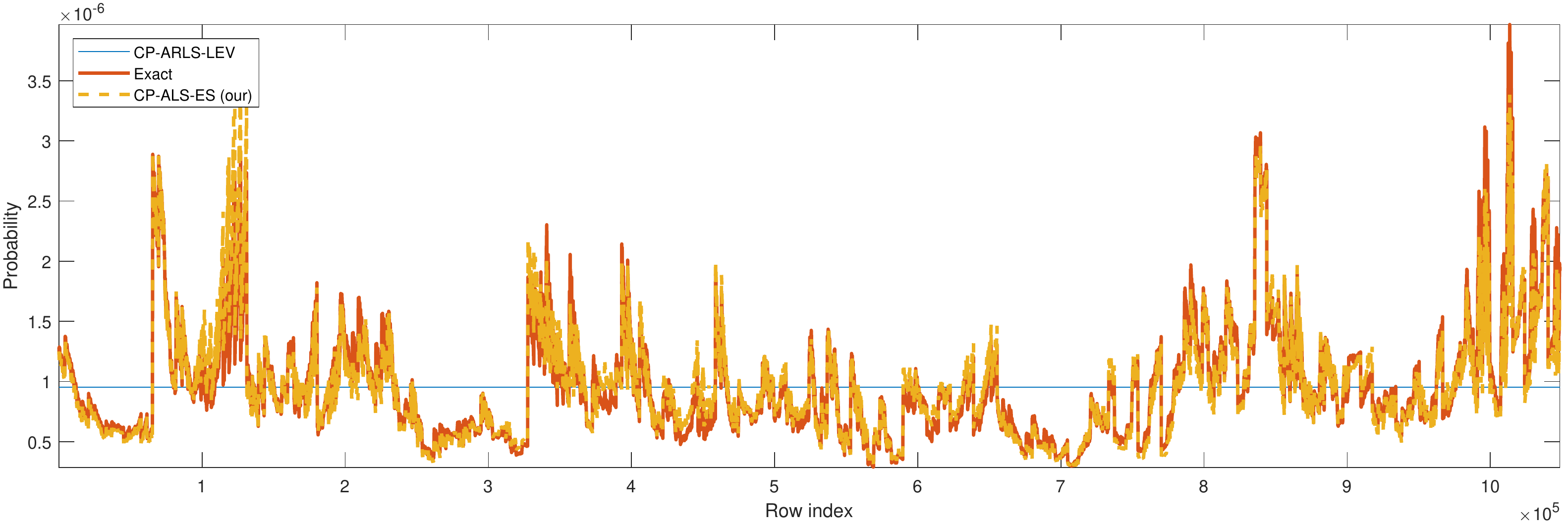}
	\caption{
		Same as Figure~\ref{fig:CP-rank-10-probability-comparison}, but for a rank-20 decomposition.
	}
	\label{fig:CP-rank-20-probability-comparison}
\end{figure}

\begin{figure}[h]
	\centering  
	\includegraphics[width=1\columnwidth]{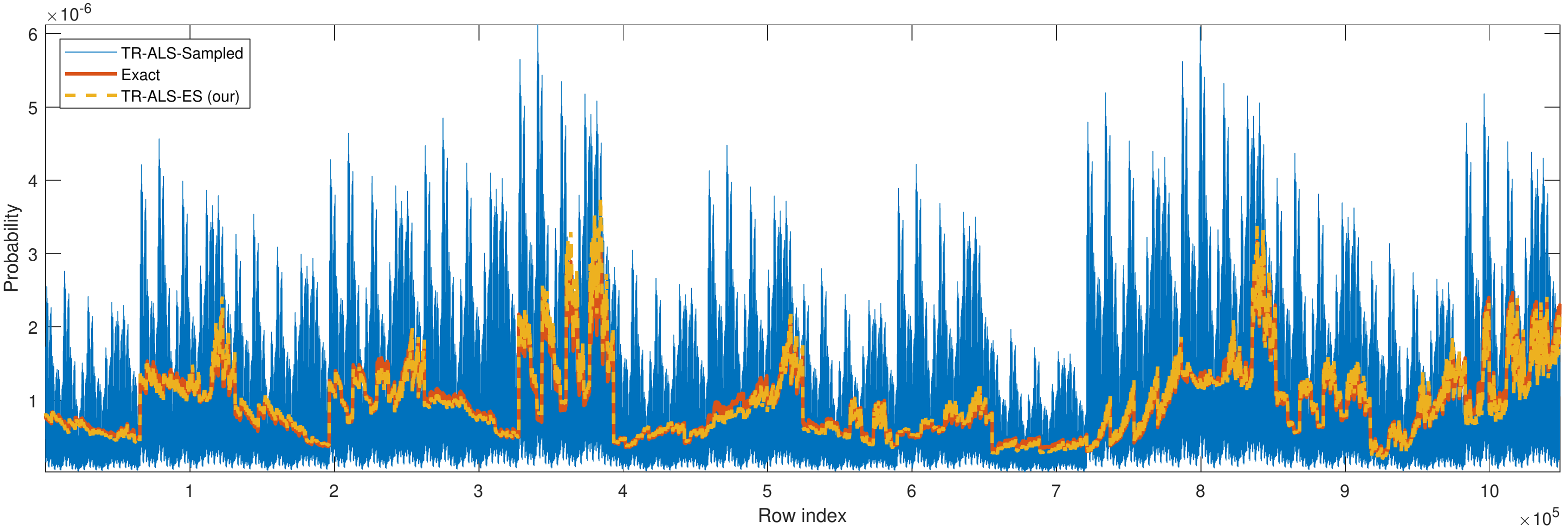}
	\caption{
		Comparison of the exact leverage score distribution, the sampling distribution used by TR-ALS-Sampled, and a realization (for $J_1 = 1000$) of the distribution used by our TR-ALS-ES.
		The least squares problem corresponds to solving for the 6th core tensor in a rank-$(3,\ldots,3)$ tensor ring decomposition of the 6-way tabby cat tensor.
	}
	\label{fig:TR-rank-3-probability-comparison}
\end{figure}

\begin{figure}[h]
	\centering  
	\includegraphics[width=1\columnwidth]{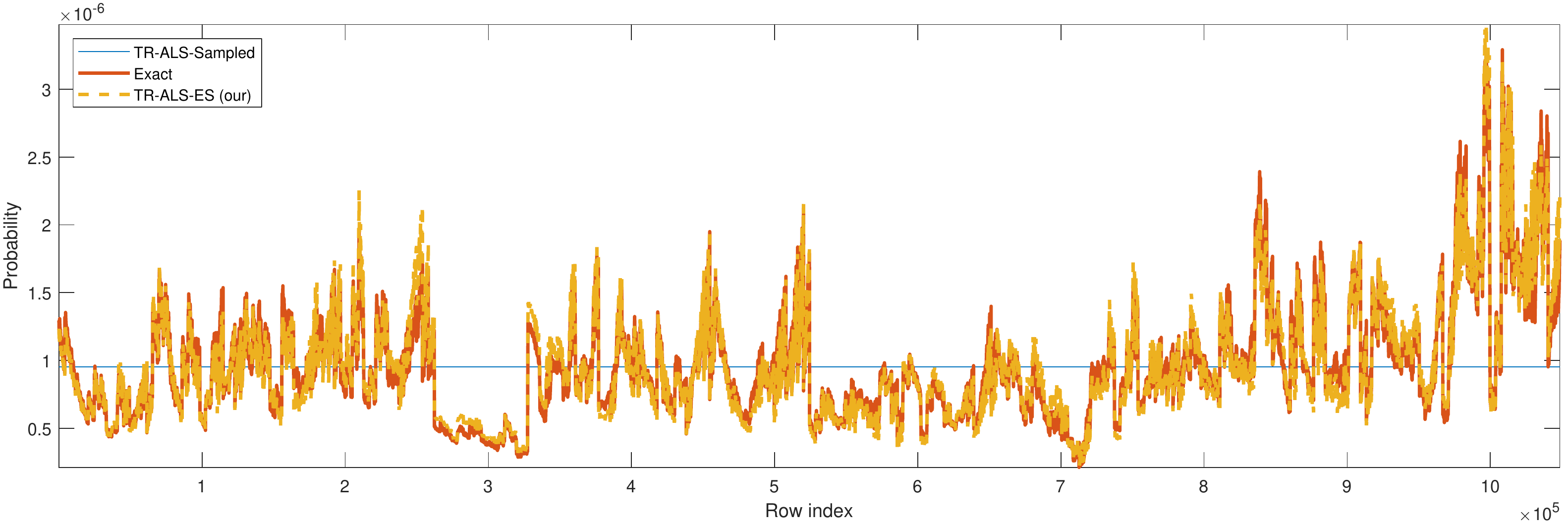}
	\caption{
		Same as Figure~\ref{fig:TR-rank-3-probability-comparison}, but for a rank-$(5,\ldots,5)$ decomposition.
	}
	\label{fig:TR-rank-5-probability-comparison}
\end{figure}

Tables~\ref{tab:KL-div-CP-time} and \ref{tab:KL-div-TR-time} report the time it took to compute the distributions used in Tables~\ref{tab:KL-div-CP} and \ref{tab:KL-div-TR}, respectively. 
Note that the different methods do not compute the full distributions the way we do in Tables~\ref{tab:KL-div-CP}--\ref{tab:KL-div-TR} and Figures~\ref{fig:CP-rank-10-probability-comparison}--\ref{fig:TR-rank-5-probability-comparison}, so these numbers are not representative of actual decomposition time and are only added here for completeness.

\begin{table}[ht!]
	\centering
	\caption{
		Time in seconds it took to compute the distributions used in Table~\ref{tab:KL-div-CP}.
		\label{tab:KL-div-CP-time}
	}
	\begin{tabular}{lll}  
		\toprule
		Method & $R=10$ & $R=20$ \\
		\midrule
		CP-ARLS-LEV 						& 0.01 & 0.01 \\
		CP-ALS-ES ($J_1 = \text{1e+4}$)		& 0.06 & 0.12 \\
		CP-ALS-ES ($J_1 = \text{1e+3}$) 	& 0.04 & 0.07 \\
		CP-ALS-ES ($J_1 = \text{1e+2}$) 	& 0.03 & 0.07 \\
		\bottomrule
	\end{tabular}
\end{table}

\begin{table}[ht!]
	\centering
	\caption{
		Time in seconds it took to compute the distributions used in Table~\ref{tab:KL-div-TR}.
		\label{tab:KL-div-TR-time}
	}
	\begin{tabular}{lll}  
		\toprule
		Method & $R=3$ & $R=5$ \\
		\midrule
		TR-ALS-Sampled 						& 0.01 & 0.01 \\
		TR-ALS-ES ($J_1 = \text{1e+4}$) 	& 0.07 & 0.21 \\
		TR-ALS-ES ($J_1 = \text{1e+3}$) 	& 0.03 & 0.10 \\
		TR-ALS-ES ($J_1 = \text{1e+2}$) 	& 0.03 & 0.10 \\
		\bottomrule
	\end{tabular}
\end{table}

\subsection{Feature Extraction Experiments} \label{sec:supp-feature-extraction-experiments}

We provide some further details on the feature extraction experiments in Section~\ref{sec:feature-extraction} in this section.
For a rank-25 CP decomposition, the 4th factor matrix is of size $7200 \times 25$.
We directly use this factor matrix as the feature matrix we feed to the $k$-NN method in Matlab.
For the rank-$(5, \ldots, 5)$ tensor ring decomposition, the 4th core tensor is of size $5 \times 7200 \times 5$. 
We turn this into a $7200 \times 25$ matrix via a classical mode-2 unfolding which we then use as the feature matrix in the $k$-NN algorithm.

In our feature extraction experiment we assume that both the labeled and unlabeled images are available when the tensor decomposition is computed.
This is a limitation since we might want to classify new unlabeled images that arrive after the decomposition has been computed without having to recompute the decomposition.
We now propose a potential approach to circumventing this limitation.
Adding a new image corresponds to adding new rows to $\Xe_{(4)}$ and $\Xe_{[4]}$.
The factor matrix $\Abf^{(4)}$ for the CP decomposition of this augmented tensor will have an additional row, while the number of rows will remain the same in the other factor matrices.
Similarly, the core tensor $\Ge^{(4)}$ for the tensor ring decomposition will have an additional lateral slice, while the number of lateral slices will remain the same for the other cores.
For the CP decomposition, a feature vector for the new image can therefore be computed via
\begin{equation} \label{eq:CP-feat-ext-new-samp}
	\abf^* = \argmin_{\abf} \| \Abf^{\neq 4} \abf^\top - \xbf^\top \|_2,
\end{equation}
where $\abf^* \in \Rb^{1 \times R}$ is the new row in $\Abf^{(4)}$ and $\xbf \in \Rb^{1 \times 49152}$ is the new row in $\Xe_{(4)}$.
For the tensor train decomposition, a feature vector for the new image can similarly be computed via
\begin{equation} \label{eq:TR-feat-ext-new-samp}
	\gbf^* = \argmin_{\gbf} \| \Gbf_{[2]}^{\neq 4} \gbf^\top - \hat{\xbf}^\top \|_2,
\end{equation} 
where $\gbf^* \in \Rb^{1 \times R_3 R_4}$ is a reshaped version of the new lateral slice in $\Ge^{(4)}$ and $\hat{\xbf} \in \Rb^{1 \times 49152}$ is the new row in $\Xe_{[4]}$.
The sampling techniques in Section~\ref{sec:efficient-sampling-for-TD} can be used to compute approximate solutions to \eqref{eq:CP-feat-ext-new-samp} and \eqref{eq:TR-feat-ext-new-samp} efficiently.
The feature vectors $\abf^*$ and $\gbf^*$ can now be used to classify the new image.

\subsection{Demonstration of Improved Complexity for the Tensor Ring Decomposition} \label{sec:demonstration-improvement-TR}

We construct a synthetic 10-way tensor that demonstrates the improved sampling and computational complexity of our proposed TR-ALS-ES over TR-ALS-Sampled.
It is constructed via \eqref{eq:tr-decomposition} from core tensors $\Ge^{(n)} \in \Rb^{3 \times 6 \times 3}$ for $n \in [10]$ with $\Ge^{(n)}(1,1,1) = 3$ and all other entries zero.
Additionally, i.i.d.\ Gaussian noise with standard deviation $0.01$ is added to all entries of the tensor.
Both methods are run for 20 iterations with target ranks $(3, 3, \ldots, 3)$ and are initialized using a variant of the randomized range finding approach proposed by \citet{larsen2020PracticalLeverageBased}, Appendix~F, adapted to the tensor ring decomposition.
TR-ALS-Sampled fails even when as many as \emph{half} (i.e., $J = 6^9/2 \approx 5.0\text{e+6}$) of all rows are sampled, taking 966 seconds.
By contrast, our TR-ALS-ES only requires a recursive sketch size of $J_1 = 1\text{e+4}$ and $J_2 = 1\text{e+3}$ samples to get an accurate solution, taking 41 seconds.
Our method improves the sampling complexity and compute time by 3 and 1 orders of magnitude, respectively.

\subsection{Preliminary Results From Experiments on the Tensor Train Decomposition} \label{sec:tt-experiments}

In this section we provide some preliminary results from experiments on the tensor train (TT) decomposition.
We do these experiments by running the different tensor ring decomposition algorithms with $R_0 = R_N = 1$ which makes the resulting decomposition a TT.
We refer to the methods by the same names as the tensor ring decomposition methods, but with ``TR'' replaced by ``TT.''

\begin{table}[ht!]
	\centering
	\caption{
		KL-divergence (lower is better) of the approximated sampling distribution from the exact one for a TT-ALS least squares problem with target TT-ranks $R_n = R$ for $1 \leq n \leq 5$.
		The TT-ALS least squares problem is identical to the TR-ALS problem in \eqref{eq:tr-optimization-Gn-matrix} but with the restriction $R_0=R_N=1$.
		\label{tab:KL-div-TT}
	}
	\begin{tabular}{lll}  
		\toprule
		Method & $R=3$ & $R=5$ \\
		\midrule
		TT-ALS-Sampled 						& 0.4843 & 0.2469 \\
		TT-ALS-ES ($J_1 = \text{1e+4}$) 	& 0.0004 & 0.0007 \\
		TT-ALS-ES ($J_1 = \text{1e+3}$) 	& 0.0087 & 0.0065 \\
		TT-ALS-ES ($J_1 = \text{1e+2}$) 	& 0.0425 & 0.0845 \\
		\bottomrule
	\end{tabular}
\end{table}

\begin{table}[ht!]
	\centering
	\caption{
		Run time, decomposition error and classification accuracy when using the TT decomposition for feature extraction.
		\label{tab:classification-results-TT}
	}
	\begin{tabular}{lrrr}  
		\toprule
		Method    						& Time (s) & Error & Accuracy (\%) \\
		\midrule
		TT-ALS							& 9440.1 & 0.44 & 95.2 \\
		TT-ALS-Sampled					&   12.3 & 0.44 & 94.1 \\
		\textbf{TT-ALS-ES (our)}	   	&   31.4 & 0.44 & 94.2 \\
		\bottomrule
	\end{tabular}
\end{table}

First, we repeat the experiments in Section~\ref{sec:sampling-ditribution-comparison} for the TT decomposition.
All settings are the same as for the tensor ring decomposition except that $R_0 = R_6 = 1$.
The results are shown in Table~\ref{tab:KL-div-TT}.
The discrepancy between the approximate leverage score sampling distribution that TT-ALS-Sampled samples from  and the exact one is greater than it is for the tensor ring decomposition (compare with Table~\ref{tab:KL-div-TR}).
The discrepancy of TT-ALS-ES is similar to that of TR-ALS-ES for $J_1 = 1\text{e+4}$ and $J_1 = 1\text{e+3}$ and smaller for $J_1 = 1\text{e+2}$. 
Since we are solving for the 6th core tensor out of 6, the theoretical bound on $J_1$ in \eqref{eq:tr-J1} becomes $J_1 \gtrsim N (R_5 R_6)^2/\delta = N R^2/\delta$ since $R_6=1$.
For the tensor ring decomposition with all $R_n=R$, the same bound is $J_1 \gtrsim N R^4 / \delta$.
A smaller discrepancy is therefore expected for TT-ALS-ES than for TR-ALS-ES.

Next, we repeat the feature extraction experiment in Section~\ref{sec:feature-extraction} for the TT decomposition.
In addition to the restriction $R_0=R_1=1$ we also increase the number of samples from 1000 to 3000 for both TT-ALS-Sampled and TT-ALS-ES since using fewer than 3000 samples yields poor results for both methods.
We also add a small Tikhonov regularization term (with regularization constant $10^{-2}$) in all least squares solves for both randomized methods in order to avoid numerical issues that otherwise appear for both. 
The TT-ranks are $R_n = 5$ for $1 \leq n \leq 3$.
The results are reported in Table~\ref{tab:classification-results-TT}.
The run time for TT-ALS is slightly faster than that of TR-ALS which is expected since the TT has fewer parameters than the tensor ring (compare with Table~\ref{tab:classification-results}).
The two randomized algorithms are a bit slower than they are for the tensor ring decomposition due to the larger number of samples being drawn.
The decomposition error is higher and the classification accuracy lower than they are for the tensor ring decomposition.
This is also expected since the TT decomposition has fewer parameters.

\end{document}